\setlist[enumerate]{leftmargin=25pt}
\setlist[itemize]{leftmargin=25pt}
\newtheorem{thm}{Theorem}[section]
\newtheorem{lemma}[thm]{Lemma}
\newtheorem{prop}[thm]{Proposition}
\newtheorem{cor}[thm]{Corollary}
\newtheorem{conj}[thm]{Conjecture}
\theoremstyle{definition}
\newtheorem*{ex}{Example}
\newtheorem*{rem}{Remark}
\theoremstyle{definition}
\newtheorem{defn}[thm]{Definition}
\DeclareMathOperator{\ord}{ord}
\newcommand{\supp}{\ensuremath{\operatorname{supp}}}
\newcommand{\deficit}{\ensuremath{\operatorname{def}}}
\newcommand{\bra}[1]{{\left({#1}\right)}}
\newcommand{\ceil}[1]{{\lceil{#1}\rceil}}
\newcommand{\scal}[1]{{\left\langle{#1}\right\rangle}}
\newcommand{\set}[1]{{\left\{{#1}\right\}}}
\DeclarePairedDelimiter\abs{\lvert}{\rvert}
\newcommand{\al}{\ensuremath{\alpha}}
\newcommand{\be}{\ensuremath{\beta}}
\newcommand{\de}{\ensuremath{\delta}}
\newcommand{\ze}{\ensuremath{\zeta}}
\newcommand{\La}{\ensuremath{\Lambda}}
\newcommand{\la}{\ensuremath{\lambda}}
\newcommand{\sig}{\ensuremath{\sigma}}
\newcommand{\Om}{\ensuremath{\Omega}}
\newcommand{\ZZ}{\ensuremath{\mathbb{Z}}}
\newcommand{\znp}{\ensuremath{\ZZ_N^{\star}}}
\newcommand{\FF}{\ensuremath{\mathbb{F}}}
\newcommand{\QQ}{\ensuremath{\mathbb{Q}}}
\newcommand{\RR}{\ensuremath{\mathbb{R}}}
\newcommand{\NN}{\ensuremath{\mathbb{N}}}
\newcommand{\wt}[1]{\ensuremath{\widetilde{#1}}}
\newcommand{\wh}[1]{\ensuremath{\widehat{#1}}}
\newcommand{\CC}{\ensuremath{\mathbb{C}}}
\newcommand{\Gal}{\ensuremath{\textnormal{Gal}}}
\newcommand{\Php}{\ensuremath{\Phi_p(X^{N/p})}}
\newcommand{\Phq}{\ensuremath{\Phi_q(X^{N/q})}}
\newcommand{\sm}{\ensuremath{\setminus}}
\newcommand{\ssq}{\ensuremath{\subseteq}}
\newcommand{\Rar}{\ensuremath{\Rightarrow}}
\newcommand{\Lrar}{\ensuremath{\Leftrightarrow}}
\newcommand{\nssq}{\ensuremath{\nsubseteq}}
\newcommand{\vn}{\ensuremath{\varnothing}}
\newcommand{\ol}{\ensuremath{\overline}}
\newcommand{\pdiv}{\ensuremath{\mid\!\mid}}
\newcommand{\bs}[1]{\ensuremath{\mathbf{#1}}}
\newcommand{\Spq}{\ensuremath{\mathscr{S}_{p,q}}}
\newcommand{\Spqmin}{\ensuremath{\mathscr{S}'_{p,q}}}
\newcommand{\F}{\ensuremath{\mathscr{F}}}
\newcommand{\Fmax}{\ensuremath{\mathscr{F}_{\max}}}
\newcommand{\out}[1]{}
\definecolor{redi}{RGB}{255,38,0}
\definecolor{redii}{RGB}{200,50,30}
\definecolor{yellowi}{RGB}{255,251,0}
\definecolor{bluei}{RGB}{0,150,255}
\definecolor{blueii}{RGB}{135,247,210}
\definecolor{blueiii}{RGB}{91,205,250}
\definecolor{blueiv}{RGB}{115,244,253}
\definecolor{bluev}{RGB}{1,58,215}
\definecolor{orangei}{RGB}{240,143,50}
\definecolor{yellowii}{RGB}{222,247,100}
\definecolor{greeni}{RGB}{166,247,166}
\tikzset{ 
table/.style={
  matrix of nodes,
  row sep=-\pgflinewidth,
  column sep=-\pgflinewidth,
  nodes={rectangle,draw=black,text width=1.25ex,align=center},
  text depth=0.25ex,
  text height=1ex,
  nodes in empty cells
  },
texto/.style={font=\footnotesize\sffamily},
title/.style={font=\small\sffamily}
}
\numberwithin{equation}{section}
\title[On the structure of spectral and tiling subsets of cyclic groups]{On the structure of spectral\\ and tiling subsets of cyclic groups}
\subjclass[2010]{43A46,11L03,13F20,20K01}
\keywords{Fuglede's conjecture, spectral sets, tiling, vanishing sums of roots of unity.}
\author{Romanos Diogenes Malikiosis} 
\address{Aristotle University of Thessaloniki, Department of Mathematics, 541 24 Thessaloniki, Greece}
\email{romanos@math.auth.gr}
\begin{document}
\begin{abstract}
 The purpose of this paper is to investigate the properties of spectral and tiling subsets of cyclic groups, with an eye towards the spectral set conjecture
\cite{Fuglede} in one dimension, which states that a bounded measurable 
 subset of $\RR$ accepts an orthogonal basis of exponentials if and only if it tiles $\RR$ by translations. This conjecture is strongly connected to its discrete counterpart, namely that
 in every finite cyclic group, a subset is spectral if and only if it is a tile. The tools presented herein are refinements of recent ones used in the setting of cyclic groups; the structure of vanishing
 sums of roots of unity \cite{LL} is a prevalent notion throughout the text, as well as the structure of tiling subsets of integers \cite{CM}. We manage to prove the conjecture for cyclic groups of order
 $p^mq^n$, when one of the exponents is $\leq6$ or when $p^{m-2}<q^4$, and also prove that a tiling subset of a cyclic group of order $p_1^mp_2\dotsm p_n$ is spectral.
\end{abstract}
\maketitle

\bigskip
\bigskip
\section{Introduction}
\bigskip\bigskip

A basic fact in Fourier analysis is the decomposition of a periodic function into a series of orthogonal exponential functions. A natural question that arises is whether this concept can be generalized for functions
defined not necessarily on intervals $[0,T]$, where $T$ is the period, but on more general domains with some weaker conditions (for example, measurability or boundedness). This is the definition of a 
\emph{spectral set}. This, of course, can be generalized to higher dimensions as well, where periodic functions have a similar decomposition.

\begin{defn}
Let $\mu$ denote the Lebesgue measure on $\RR$.
 A bounded, measurable set $\Om\ssq\RR^d$ with $\mu(\Om)>0$ is called \emph{spectral}, if there is a discrete set $\La\ssq\RR^d$ such that the set of exponential functions $\set{e_{\la}(x)}_{\la\in\La}$,
 where $e_{\la}(x)=e^{2\pi i \la\cdot x}$, is a complete orthogonal set, that is
 \[\scal{e_{\la},e_{\la'}}_{\Om}=\int_{\Om}e_{\la-\la'}(x)dx=\de_{\la\la'}\mu(\Om),\]
 and every $f\in L^2(\Om)$ can be expressed as a Fourier series,
 \[f(x)=\sum_{\la\in\La}a_{\la}e_{\la}(x),\]
 for some $a_{\la}\in\CC$ (\emph{Fourier coefficients}).
\end{defn}

 Usually, we restrict periodic functions on a fundamental domain with respect to the period lattice; so $[0,1]^d$ is the
standard case, when the period lattice is $\ZZ^d$. In general, functions defined on fundamental parallelepipeds of a lattice $\La\ssq\RR^d$ have the same property; the exponentials then correspond to
$e_{\la}(x)$, where $\la$ ranges through the dual lattice $\La^{\star}$. We note that fundamental parallelepipeds are \emph{tiles} of $\RR^d$ with respect to the period lattice, say $\La$.

\begin{defn}
 A subset $A\ssq\RR^d$ \emph{tiles} $\RR^d$ by translations, if there is a translation set $T\ssq\RR^d$ such that almost all elements of $\RR^d$ have a unique representation as a sum $a+t$, where
 $a\in A$, $t\in T$. We will denote this by $A\oplus T=\RR^d$. $T$ is called the \emph{tiling complement} of $A$, and $(A,T)$ is called a \emph{tiling pair}.
\end{defn}

Given the premise in the standard periodic case, i.e. functions defined on $[0,1]^d$, it might be tempting to generalize this situation.
This was first attempted in 1974, when Fuglede \cite{Fuglede} stated the following conjecture, connecting spectral subsets and tiles of $\RR^d$.

\begin{conj}
 Let $\Om\ssq\RR^d$ be a bounded measurable set. $\Om$ accepts a complete orthonormal basis of exponentials if and only if $\Om$ tiles $\RR^d$ by translations.
\end{conj}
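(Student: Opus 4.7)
The plan is to attack Fuglede's conjecture by reducing it to its discrete analogue in finite cyclic groups, which is the principal object of study in the present paper. Both directions are to be handled by the same three-step scheme: (i) show that the tiling complement or spectrum of a bounded measurable $\Om \ssq \RR$ is, up to rescaling, a periodic subset of $\ZZ$; (ii) use the period $N$ to descend the tiling or spectral relation to the finite cyclic group $\Zn$; (iii) invoke the corresponding theorem in $\Zn$. I would carry this out in dimension $d=1$, where the reduction is cleanest and where the paper's cyclic group theorems directly apply.

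For the tile $\Rar$ spectral direction, step (i) is the Lagarias--Wang theorem that any bounded $\Om \ssq \RR$ which tiles $\RR$ by translations admits a periodic tiling complement, say $T = T_0 + N\ZZ$ with $T_0 \ssq \set{0,\ldots,N-1}$ after rescaling. This forces $\Om$ modulo null sets to take the form $A + [0,1)$ with $A \oplus T_0 = \Zn$. Step (iii) then produces a spectrum $\La_0 \ssq \Zn$ for $A$, and I would verify that $\La := \La_0 + \ZZ$ is a spectrum for $\Om$ via the factorization $\wh{\bs1_{\Om}}(\xi) = \wh{\bs1_{[0,1)}}(\xi) \cdot \sum_{a \in A} e^{-2\pi i a \xi}$ together with Poisson summation and Plancherel on $\Zn$.

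For the spectral $\Rar$ tile direction, step (i) is considerably more delicate: one must show that a spectrum $\La$ of a bounded $\Om \ssq \RR$ is itself periodic after rescaling. The lever is that every nonzero difference $\la - \la'$ from $\La$ lies in the zero set of $\wh{\bs1_{\Om}}$, a trigonometric polynomial whose zero set carries strong arithmetic structure; combined with the density constraint on $\La$ forced by orthogonality, this should constrain $\La$ to a rational arithmetic structure and ultimately to a periodic form $\La = \La_0 + N\ZZ$. The pair $(A, \La_0)$ is then spectral in $\Zn$, and step (iii) furnishes a tiling complement $T_0$, lifting to $T = T_0 + N\ZZ$ in $\RR$.

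The hard part is concentrated in step (iii): the discrete Fuglede conjecture in $\Zn$ is open for general $N$, and the bulk of the present paper is devoted to it. The techniques, rooted in the structure of vanishing sums of roots of unity \cite{LL} and in Coven--Meyerowitz-type conditions on tiling subsets of $\ZZ$ \cite{CM}, are applied to cyclic groups of order $p^m q^n$ and $p_1^m p_2 \cdots p_n$; combined with the periodicity inputs of step (i), they yield the conjecture in $\RR$ for $\Om$ whose underlying period has the prescribed factorization. Closing the conjecture as stated in full generality would additionally require either strengthening step (iii) to arbitrary $N$, or extending step (i) beyond $d=1$, both of which remain wide open.
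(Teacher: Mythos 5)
This statement is Fuglede's conjecture itself; the paper offers no proof of it, because none exists: as stated for general $d$ it is \emph{false} (spectral non-tiles and non-spectral tiles exist already in dimensions $3,4,5$ \cite{TaoFuglede,KM06,FMM06,FarkasRevesz,KMhadamard,Mat}), and in $d=1,2$ it is open. So there is no paper proof to compare your proposal against, and your proposal cannot be a proof of the statement as given: restricting to $d=1$ does not address the general-$d$ claim, which is refuted by the very counterexamples the paper cites.

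Even as a program for $d=1$, there are concrete gaps. First, in the Spectral$\Rar$Tiling direction your step (i) -- that a spectrum of a bounded measurable $\Om\ssq\RR$ is, after rescaling, rational and periodic -- is precisely the open obstruction the paper flags: the known reductions run $\textbf{S-T}(\RR)\Longrightarrow\textbf{S-T}(\ZZ)\Longrightarrow\forall N\,\textbf{S-T}(\ZZ_N)$, i.e.\ \emph{from} the continuous setting \emph{to} the discrete one, and the converse (which your scheme needs) hinges on rationality of spectrum, ``a difficult problem in its own right,'' plus periodicity \cite{IK13,DL14}. Your heuristic that $\wh{\bs1_{\Om}}$ is ``a trigonometric polynomial'' is false for a general bounded measurable $\Om$ (it is merely an entire function of exponential type), so the claimed arithmetic structure of its zero set is unavailable. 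Second, in the Tiling$\Rar$Spectral direction, a periodic tiling complement does not force $\Om$ to have the form $A+[0,1)$ modulo null sets; the standard reduction is more involved and passes through weighted/level-set arguments, not this clean form. Third, your step (iii) -- the discrete conjecture in $\ZZ_N$ for arbitrary $N$ -- is open; the paper proves Tiling$\Rar$Spectral only for $N=p_1^np_2\dotsm p_k$ (Theorem \ref{mainthm2}) and Spectral$\Rar$Tiling only for $N=p^mq^n$ under the exponent restrictions of Theorem \ref{mainthm}. You acknowledge most of this yourself, which confirms that what you have written is an outline of a known (and currently blocked) reduction strategy, not a proof, and it does not engage with the fact that the statement, as formulated for all $d$, is simply false.
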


Fuglede proved this conjecture when either a spectrum or a tiling complement of $\Om$ is a lattice. All results proven in the next thirty years were in the positive direction, until Tao \cite{TaoFuglede}
disproved this conjecture, by constructing spectral subsets of $\RR^5$ that do not tile the space. Shortly thereafter, tiles with no spectra in $\RR^5$ had been found by Kolountzakis and Matolcsi \cite{KM06}.
The current state of art on Euclidean spaces is the failure of Fuglede's conjecture on dimensions 3 and above, in both directions \cite{FMM06,FarkasRevesz,KMhadamard,Mat}.
The noteworthy aspect of the construction of counterexamples is the passage to the setting of finite Abelian groups, which is also the setting of the present article. 
To be more precise, an example of a spectral subset of $\ZZ_8^3$ that does not tile is lifted to a counterexample in $\RR^3$ \cite{KMhadamard}, and similarly, an example of a tile with no spectra in $\ZZ_{24}^3$ is
lifted to a counterexample in $\RR^3$ \cite{FMM06}. The conjecture is still open in $\RR$ and $\RR^2$. In the one-dimensional case, this is best summarized as follows (\cite{DL14} and the
references therein):
\[\textbf{T-S}(\RR)\Longleftrightarrow\textbf{T-S}(\ZZ)\Longleftrightarrow\forall N\textbf{T-S}(\ZZ_N)\]
and
\[\textbf{S-T}(\RR)\Longrightarrow\textbf{S-T}(\ZZ)\Longrightarrow\forall N\textbf{S-T}(\ZZ_N),\]
where $\textbf{S-T}(G)$ denotes the fact that the Spectral$\Rar$Tiling direction holds in $G$, while $\textbf{T-S}(G)$ denotes that fact that the reverse implication is true.

We emphasize that the equivalence of the Spectral$\Rar$Tiling direction between the ``continuous'' and the ``discrete'' settings 
depends on \emph{rationality of spectrum}: the claim that every spectral subset of $\RR$ of measure $1$ has a spectrum in $\QQ$, which is a difficult problem in its own right. Another important
ingredient for this yet unproved equivalence is the periodicity of spectrum \cite{IK13}.

This conjecture has a renewed interest in the setting of finite Abelian groups during the last
few years, mostly due to the counterexamples mentioned above. In the cyclic case, \L{}aba first connected \cite{Laba} the work of Coven \& Meyerowitz \cite{CM}
on tiling subsets of integers to Fuglede's
conjecture, and proved subsequently that tiles of $\ZZ$ of cardinality $p^m$ or $p^mq^n$, where $p,q$ primes, are also spectral; the reverse direction holds when the cardinality is $p^m$. 
These arguments can be adapted to the finite cyclic group setting, proving that the Tiling$\Rar$Spectral direction is true for finite cyclic groups of order $p^mq^n$; for a self-contained account, 
we refer the reader to \cite{MK}. Moreover, both directions of Fuglede's conjecture hold for cyclic groups of order $p^m$; again, self-contained
proofs can be found in \cite{Qpfuglede} and \cite{MK}.

We mention in passing two another recent breakthroughs in Fuglede's conjecture; this conjecture has also been confirmed for the field of $p$-adic rational numbers $\QQ_p$
\cite{Qpfuglede2,Qpfuglede}, and for convex bodies in $\RR^d$ \cite{LM19}. We return to the setting of this paper, namely finite Abelian groups: for finite Abelian groups with two generators, 
it has been shown that Fuglede's conjecture holds 
in $\ZZ_p\times \ZZ_p$ \cite{IMP15}, a result later extended to $\ZZ_p\times\ZZ_{p^2}$ \cite{Shi19b} and very recently to $\ZZ_p\times\ZZ_{p^n}$ \cite{Zhang21}. When the generators are at least four,
the Spectral$\Rar$Tiling direction fails when the cardinality of the group is odd \cite{FS2020}. 

The goal of the present article is to develop tools and prove facts about spectral subsets of $\ZZ_N$, especially when $N$ has at most two distinct prime factors. We note that the conjecture is known to be
true when $N=p^nq$ \cite{MK} or $N=p^nq^2$ \cite{KMSV20}, i.e. when one of the exponents is $\leq2$. The methods and techniques developed herein extend these results, thus
``raising'' the exponent of $q$:


\begin{thm}\label{mainthm}
 Let $A\ssq\ZZ_N$ be a spectral set, where $N=p^mq^n$, with $p,q$ distinct primes. Then $A$ tiles $\ZZ_N$ if one of the following holds:
 \begin{enumerate}
  \item $p<q$ and $m\leq9$ or $n\leq6$.
  \item $p^{m-2}<q^4$.
 \end{enumerate}
\end{thm}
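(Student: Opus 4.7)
The plan is to invoke the Coven--Meyerowitz criterion \cite{CM}: a set $A\ssq\ZZ_N$ tiles $\ZZ_N$ if and only if its mask polynomial $A(X)=\sum_{a\in A}X^a$ satisfies both (T1) $|A|=\prod_{s\in S_A}\Phi_s(1)$, where $S_A$ is the set of prime powers $s\mid N$ with $\Phi_s(X)\mid A(X)$, and (T2) $\Phi_{s_1\cdots s_k}(X)\mid A(X)$ for every collection of pairwise coprime $s_1,\dots,s_k\in S_A$. Since spectral sets in $\ZZ_N$ are already known to satisfy (T1) (see \cite{Laba,MK}), the theorem reduces to verifying (T2). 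When $N=p^mq^n$ the condition (T2) collapses to the single two-prime implication: whenever $\Phi_{p^a}\mid A(X)$ and $\Phi_{q^b}\mid A(X)$ for some $1\leq a\leq m$ and $1\leq b\leq n$, one must have $\Phi_{p^aq^b}\mid A(X)$.

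Next I would translate spectrality into a cyclotomic constraint. If $\La$ is a spectrum of $A$, then $(\La-\La)\sm\set{0}\ssq\set{m\in\ZZ_N:\Phi_{N/\gcd(N,m)}(X)\mid A(X)}$; and because spectrality is symmetric in $A$ and $\La$, the analogous statement holds with the two roles swapped. This self-dual description of the cyclotomic divisor sets $Z_A$ and $Z_\La$ is the structural engine of the argument, and it immediately packs $\La$ with a controlled collection of cyclotomic factors once those of $A$ are known.

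Now assume for contradiction that (T2) fails at some minimal pair $(p^a,q^b)$: $\Phi_{p^a}\mid A$ and $\Phi_{q^b}\mid A$, but $\Phi_{p^aq^b}\nmid A$. Evaluating $A$ at a primitive $p^aq^b$-th root of unity yields a nonzero sum of roots of unity, whereas the projections of $A$ onto the $\Phi_{p^a}$- and $\Phi_{q^b}$-layers must vanish. Applying the Lam--Leung structure theorem \cite{LL} decomposes those vanishing sums into elementary $\Phi_p$- and $\Phi_q$-blocks; coupling this with the Coven--Meyerowitz tiling factorization produces a rigid system of divisibility constraints across the two prime levels. I then intend to run an inductive descent on $(a,b)$, propagating divisibilities upward or downward until the system becomes inconsistent with $\Phi_{p^aq^b}\nmid A$, or until (T2) is forced directly.

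The numerical thresholds of the theorem emerge from counting in this coupled system. Under hypothesis (2), the bound $p^{m-2}<q^4$ ensures that there are too few $p$-adic configurations available for the Lam--Leung decomposition to realize the $q$-adic layering demanded by $\La$ without producing $\Phi_{p^aq^b}\mid A$. Under (1), the constraints $m\leq9$ or $n\leq6$ combined with $p<q$ leave enough slack to run a direct case analysis on the Lam--Leung atoms on the short side. The principal obstacle I foresee is the combinatorial coordination between the two prime levels: Lam--Leung controls vanishing sums \emph{within} each prime layer, but the two-prime problem demands a global coupling between $p^a$- and $q^b$-cosets of $A$, and the exact thresholds above must be extracted by optimizing this coupling against the Fourier-mass constraints coming from the spectrum.
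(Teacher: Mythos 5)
There is a genuine gap, and it sits at the very first step: you assert that spectral sets in $\ZZ_N$ are ``already known to satisfy (T1)'' and that the theorem therefore reduces to verifying (T2). No such result is known (and it is not in \cite{Laba} or \cite{MK}, which prove (T1)\,\&\,(T2)$\Rightarrow$spectral and tile$\Rightarrow$(T1), not spectral$\Rightarrow$(T1)). For an arbitrary set one only gets the divisibility $p^{\abs{S_A^N(p)}}q^{\abs{S_A^N(q)}}\mid\abs{A}$, and whether equality holds for spectral sets is precisely the open issue the paper has to work around: for a putative minimal counterexample it is shown (Lemma \ref{t1fail}, Corollary \ref{wt1tile}) that the spectrum $B$ in fact \emph{fails} even the weak version (wT1), and this failure --- quantified through the sets $U_B^N(p)$ and the lower bound $\abs{U_B^N(p)}\geq\ceil{\log_pq}$ of Lemma \ref{Ulb} --- is what drives the contradiction. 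So your reduction to (T2) alone is unfounded, and without it the rest of the plan has no base to stand on.

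Beyond that, the remainder of the proposal is a sketch with the essential mechanisms missing. The paper does not verify (T2) for $A$ by a descent on a minimal bad pair $(p^a,q^b)$; instead it takes $N$ minimal with respect to divisibility ($N\in\Spqmin$), reduces to primitive, absorption-free spectral pairs, proves a symmetry between the root patterns of $A$ and of its spectrum $B$ (Lemma \ref{rootpatterns}), forces a stock of prime-power cyclotomic roots (Proposition \ref{basict2}, Corollary \ref{rootssmallpowers}), and then plays the divisibility of $\abs{A}=\abs{B}$ coming from Corollary \ref{divAB} against the upper bounds of Lemma \ref{deficitestimate} via the root-deficit bookkeeping (Corollary \ref{deficitestimate2}, Proposition \ref{deficitestimate3}); the final contradiction also leans on Proposition \ref{maxpower} (a spectral set with $p^m\mid\abs{A}$ or $q^n\mid\abs{A}$ tiles). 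The explicit thresholds $m\leq 9$, $n\leq 6$ and $p^{m-2}<q^4$ fall out of the inequalities $m\geq \abs{S_A^N(p)}+\abs{U_A^N(p)}+\deficit_p(A)$ and $n\geq \abs{S_A^N(q)}+\abs{U_A^N(q)}+\deficit_q(A)$ together with $\deficit_p,\deficit_q\geq 2\ceil{\log_pq},\,3$; your proposal gives no mechanism that would produce these specific numbers. Saying that Lam--Leung plus Coven--Meyerowitz yields ``a rigid system of divisibility constraints'' that an unspecified descent renders inconsistent is a statement of hope, not a proof: the hard content of the paper is exactly the coupling between the $p$- and $q$-layers (absorption versus equidistribution, Lemma \ref{abseqd}, and the counting in Lemma \ref{Ulb}) that you flag as the ``principal obstacle'' but do not address.
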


Fuglede's conjecture in $\ZZ_N$ when $N=p^mq^n$ seems within reach. It is the hope of the author that the methods developed here to ``raise'' the exponent, combined with the recent proofs of Fuglede's conjecture
in the groups $\ZZ_{pqr}$ \cite{Shi19a}, $\ZZ_{p^2qr}$ \cite{Som19}, and $\ZZ_{pqrs}$ \cite{KMSV21}, where the number of primes dividing $N$ is increased, will provide the groundwork towards a more effective attack on the one dimensional 
Fuglede's conjecture. 

Regarding the Tiling$\Rar$Spectral direction, we will provide a proof in cyclic groups of order $p_1^np_2\dotsb p_k$; this direction was previously known for groups of order $p^mq^n$ \cite{Laba}
(see also \cite{MK}), or of square-free order (proven in Terence Tao's blog\footnote{https://terrytao.wordpress.com/2011/11/19/some-notes-on-the-coven-meyerowitz-conjecture/} by \L{}aba and Meyerowitz; 
see also \cite{Shi19a}). Very recently, it was also proved for groups of order $(pqr)^2$ \cite{LL21a,LL21b}.

\begin{thm}\label{mainthm2}
 Let $N=p_1^np_2\dotsb p_k$, where $p_1,\dotsc,p_k$ are distinct primes. If $A\ssq\ZZ_N$ tiles, then $A$ is spectral.
\end{thm}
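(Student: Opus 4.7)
My plan is to show that any tile $A\ssq\Zn$ with $N=p_1^n p_2\dotsb p_k$ satisfies both Coven--Meyerowitz conditions (T1) and (T2), and then to invoke \L{}aba's theorem \cite{Laba}, which guarantees that any finite set satisfying (T1) and (T2) is spectral. Condition (T1) is automatic: a tile of $\Zn$ lifts periodically to a tile of $\ZZ$, so by \cite{CM} one has $|A|=\prod_{s\in S_A}\Phi_s(1)$, where $S_A$ is the set of prime powers $p^\al$ with $\Phi_{p^\al}(X)\mid A(X)$. The substance of the theorem lies in verifying (T2): for every $\{s_1,\dotsc,s_j\}\ssq S_A$ whose elements are prime powers at distinct primes, one must have $\Phi_{s_1\dotsb s_j}(X)\mid A(X)$.

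The structural advantage of $N=p_1^n p_2\dotsb p_k$ is that each element of $S_A$ is either a power $p_1^\al$ with $\al\leq n$ or a single prime $p_i$ with $i\geq 2$; thus the composite $d=s_1\dotsb s_j$ has at most one exponent exceeding $1$. When $j\leq 2$, (T2) reduces to the two-prime case of \cite{CM}. For $j\geq 3$ I would induct on $j$. Letting $B$ be the tiling complement, so $A(X)B(X)\equiv 1+X+\dotsb+X^{N-1}\pmod{X^N-1}$, irreducibility of $\Phi_d$ forces $\Phi_d\mid A(X)$ or $\Phi_d\mid B(X)$, and the task is to rule out the second alternative. Assuming toward a contradiction that $\Phi_d\mid B(X)$, the Lam--Leung structure theorem \cite{LL} decomposes the vanishing sum $B(\zeta_d)=0$ into minimal vanishing sums, each supported on a full coset of $p_\ell$-th roots of unity for some prime $p_\ell\mid d$. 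Projecting $B$ along the subgroup of order $p_\ell$ in $\Zn$---particularly clean when $\ell\geq 2$, since then $p_\ell$ has exponent $1$ in $N$---transfers this decomposition to a structural statement about $\pi(B)\ssq\ZZ_{N/p_\ell}$ that should conflict with both (T1) for the projected tile $\pi(A)$ and with the already-known divisibilities $\Phi_{s_{i_1}\dotsb s_{i_{j-1}}}\mid A(X)$ over strict subsets of $\{s_1,\dotsc,s_j\}$.

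Once (T2) is in hand, \L{}aba's construction provides the explicit spectrum
\[
\La=\bigoplus_{p^\al\in S_A}\set{0,\tfrac{N}{p^\al},\tfrac{2N}{p^\al},\dotsc,\tfrac{(p-1)N}{p^\al}},
\]
and the theorem follows.

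The principal obstacle is exactly the inductive step of (T2) for $j\geq 3$, since the general form is the open Coven--Meyerowitz (T2) conjecture; the proof must therefore use the shape of $N$ in an essential way. The guiding heuristic is that only $p_1$ can appear with exponent greater than $1$, so any Lam--Leung minimal component of a hypothetical vanishing sum $B(\zeta_d)=0$ in the direction of a prime $p_\ell$ with $\ell\geq 2$ must live inside a coset of a subgroup of index $p_\ell$, and this rigidity ought to reduce the multi-parameter (T2) problem to a manageable one-parameter recursion. The most delicate aspect will be the bookkeeping required to propagate divisibilities from subproducts up to $d=s_1\dotsb s_j$, which will presumably rest on refinements of \cite{CM} and \cite{LL} developed in the body of the paper.
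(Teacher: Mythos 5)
Your overall frame---verify \ref{t1} and \ref{t2} for the tile $A$ and then invoke the \L{}aba-type spectrum construction (Theorem \ref{LCM}\ref{II})---is the same as the paper's, and the observation that only $p_1$ can occur with exponent greater than $1$ is indeed the right starting point. However, the heart of the theorem, namely the verification of \ref{t2}, is not actually carried out, and the main tool you propose for it would fail. First, the strong Lam--Leung decomposition of a vanishing sum into full cosets of prime-order roots of unity (Theorem \ref{vansumspos}) is only valid when the order of the root of unity has at most two distinct prime divisors; for $d=s_1\dotsm s_j$ with $j\geq3$---exactly the case your induction must handle---only the R\'edei--de Bruijn--Schoenberg statement (Theorem \ref{vansums}) survives, and the paper records an explicit counterexample to the nonnegative full-coset decomposition immediately after Theorem \ref{vansumspos}. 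So ``each minimal component of $B(\ze_d)=0$ is a full coset of $p_\ell$-th roots'' is false in the generality you need. Second, the base case ``$j\leq2$ reduces to the two-prime case of \cite{CM}'' is not justified: Coven and Meyerowitz prove \ref{t2} for tiles whose \emph{cardinality} has at most two prime divisors, not for an arbitrary tile and an arbitrary pair of prime powers in $S_A$. Third, the inductive step itself is only a heuristic (``should conflict,'' ``ought to reduce''); no contradiction is actually derived, as you acknowledge.

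The paper's proof avoids all of this with a single dilation trick. Write $A\oplus T=\ZZ_N$ and let $M$ be the largest divisor of $\abs{A}$ prime to $p_1$; since each $p_i$ with $i\geq2$ divides $N$ exactly once, $\gcd(M,\abs{T})=1$, so Lemma \ref{tilingpair} yields $A(X)T(X^M)\equiv 1+X+\dotsb+X^{N-1}\bmod(X^N-1)$, i.e.\ the dilate $M\cdot T$ is again a tiling complement of $A$ (the case $\gcd(\abs{A},\abs{T})=1$ is handled separately and is immediate, since there $A$ tiles by a subgroup). After relabeling so that $\Phi_{p_2},\dotsc,\Phi_{p_k}$ divide $A(X)$, one has $M=p_2\dotsm p_k$, and \ref{t2} drops out by evaluation: for a nonprime divisor $d$ of $M$, $T(\ze_d^M)=T(1)\neq0$, which forces $A(\ze_d)=0$; and for a root of order $p_1^{\ell}d$ with $\Phi_{p_1^{\ell}}\mid A(X)$ and $d\mid M$, the value $T(\ze_{p_1^{\ell}d}^M)$ is a Galois conjugate of $T(\ze_{p_1^{\ell}})\neq0$, forcing $\Phi_{p_1^{\ell}d}\mid A(X)$. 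No induction on the number of prime factors and no structure theory of vanishing sums is needed; the dilation by $M$ is the missing idea in your plan.
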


Even if Fuglede's conjecture is proven for $\ZZ_{p^mq^n}$, one should be cautious before conjecturing further that this might be true in all cyclic groups. For example, several strengthened
versions of the Tiling$\Rar$Spectral directions, such as the conjectures of Sands \cite{Sands} and Tijdeman \cite{Tijdeman}, are known to hold when $N$ has the
form $p^mq^n$, but they break down when $N$ has at least $3$ distinct prime factors, due to counterexamples given by Szabo \cite{Szabo}. 
Furthermore, the main tools used here, such as the structure of vanishing sums of $N$th
roots of unity \cite{LL} and primitive subsets of $\ZZ_N$, are much stronger when $N$ has at most two distinct prime factors. 



The different tools needed for the proofs of the above theorems are laid out in Sections 2 and 3. In Section 4 we prove \ref{mainthm2}, which is based on an inductive approach, and is relatively easy to follow.
The remaining sections are exclusively devoted to the two-prime case, $N=p^mq^n$, and they are much more technical and demanding. Theorem \ref{mainthm} is eventually proven in Section 11.

\bigskip\bigskip
\section{Multisets and mask polynomials}
\bigskip\bigskip

We denote by $\ZZ_N$ the cyclic additive group of $N$ elements. A multiset $A$ in $\ZZ_N$ is a collection of elements of $\ZZ_N$ along with some multiplicities; any element $a\in A$ appears $m_a\in\NN$ times
in $A$. The characteristic function of $A$ is denoted by $\bs 1_A$, and is defined by $\bs 1_A(a)=m_a$. It is understood that if $A$ is a proper set, this function takes only the values $0$ and $1$. The
support of the multiset $A$, denoted by $\supp A$, is simply the subset of $\ZZ_N$ of elements that appear at least once in $A$; in other words, $\supp A=\supp(\bs 1_A)$.

\begin{defn}
 Let $A$ be a multiset of elements in $\ZZ_N$. The \emph{mask polynomial} of $A$ is defined by $A(X)=\sum_{a\in A}m_a X^a$, and is considered as an element in $\ZZ[X]/(X^N-1)$. As such, the values of $A(X)$
 on the $N$th roots of unity $\ze_N^d$, $1\leq d\leq N$, (here $\ze_N=e^{2\pi i/N}$) are well-defined, i.e. independent of the representative of their class $\bmod(X^N-1)$. Also, for every $d\in\ZZ$ define the multiset
 $d\cdot A$ whose characteristic function is 
 \[\bs 1_{d\cdot A}(x)=\sum_{a\in\ZZ_N, da=x}\bs 1_A(a).\]
 The support of $d\cdot A$ will simply be denoted by $dA$. Finally, the multiset of elements of $A$ that are congruent to $j\bmod m$ for $m\mid N$ will be denoted as $A_{j\bmod m}$.
\end{defn}

\begin{ex}
 If $A=\set{0,3}\ssq\ZZ_6$,
then $2A=\set{0}$, while $2\cdot A=\set{0,0}$; moreover, $A(X)\equiv1+X^3\bmod(X^6-1)$, $(2A)(X)\equiv 1$, $(2\cdot A)(X)\equiv 2$. Also, $A_{0\bmod3}=A$, while $A_{1\bmod3}=A_{2\bmod3}=\vn$.
\end{ex}

In general, the following holds for the mask polynomial of $m\cdot A$:

\begin{prop}\label{maskpoly}
 Let $A$ be a multiset with elements from $\ZZ_N$. Then $A(X^m)\equiv(m\cdot A)(X)\bmod(X^N-1)$ for every $m\in\NN$. If $m\mid N$, then
 \[A(X^m)\equiv\sum_{j=0}^{N/m-1}\abs{A_{j\bmod N/m}}X^{jm}\bmod(X^N-1).\]
\end{prop}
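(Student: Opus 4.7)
The plan is a direct computation with the mask polynomial, followed by a change of summation.

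\textbf{Step 1: General case.} I would start by expanding directly from the definition:
\[A(X^m)=\sum_{a\in A}m_a X^{ma}=\sum_{a\in\ZZ_N}\bs 1_A(a)X^{ma}.\]
Since we are working in $\ZZ[X]/(X^N-1)$, every exponent may be reduced modulo $N$, so $X^{ma}\equiv X^{ma\bmod N}$. Grouping the terms according to the value $x:=ma\bmod N\in\ZZ_N$ gives
\[A(X^m)\equiv\sum_{x\in\ZZ_N}\Bigl(\sum_{\substack{a\in\ZZ_N\\ ma\equiv x}}\bs 1_A(a)\Bigr)X^{x}\pmod{X^N-1}.\]
The inner sum is precisely $\bs 1_{m\cdot A}(x)$ by the definition of the multiset $m\cdot A$ given just before the proposition, so the right-hand side is $(m\cdot A)(X)$. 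This yields the first claim.

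\textbf{Step 2: Divisibility case.} Suppose $m\mid N$ and set $k=N/m$. The key observation is the description of the fiber of multiplication by $m$ on $\ZZ_N$: for $a,j\in\ZZ$ we have $ma\equiv jm\pmod{N}$ if and only if $m(a-j)\equiv 0\pmod{mk}$, which is equivalent to $a\equiv j\pmod{k}$. Hence the image of $\ZZ_N$ under multiplication by $m$ is $\{0,m,2m,\dotsc,(k-1)m\}$, and for each $j\in\{0,\dotsc,k-1\}$ the preimage of $jm$ is precisely the residue class $j\bmod k$ in $\ZZ_N$.

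\textbf{Step 3: Conclusion.} Substituting this fiber description into the expression obtained in Step~1, the inner sum for $x=jm$ becomes
\[\sum_{\substack{a\in\ZZ_N\\ a\equiv j\,(\bmod\,k)}}\bs 1_A(a)=\abs{A_{j\bmod k}}=\abs{A_{j\bmod N/m}},\]
while the inner sum vanishes for any $x$ that is not a multiple of $m$. Consequently,
\[A(X^m)\equiv\sum_{j=0}^{k-1}\abs{A_{j\bmod N/m}}\,X^{jm}\pmod{X^N-1},\]
which is the displayed formula (interpreted with the implicit summation over $j$).

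\textbf{Main obstacle.} There is no real obstacle here; the argument is pure bookkeeping. The only subtlety worth naming is the bijection in Step~2 between cosets of $k\ZZ_N$ in $\ZZ_N$ and multiples of $m$ in $\ZZ_N$, which is what lets us identify the groupings by $x=jm$ with the slices $A_{j\bmod N/m}$. Care must be taken that the terms are counted with multiplicity (since $A$ is a multiset, not a set), but this is automatic once one writes everything in terms of $\bs 1_A$.
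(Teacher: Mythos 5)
Your proposal is correct and follows essentially the same route as the paper: expand $A(X^m)$ from the definition, reduce exponents $\bmod\ N$, and regroup terms according to the value of $ma\bmod N$, which is exactly the definition of $(m\cdot A)(X)$, with the $m\mid N$ case handled by identifying the fibers of multiplication by $m$ with the residue classes $\bmod\ N/m$. Your explicit fiber description ($ma\equiv jm\bmod N\iff a\equiv j\bmod N/m$) is if anything slightly more careful than the paper's compressed rewriting, but it is the same argument.
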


\begin{proof}
We have
\[A(X^m)\equiv \sum_{j\in A}X^{jm}\equiv \sum_{\substack{j\in A\\ jm\equiv g\bmod N}} X^g\bmod(X^N-1),\]
and the last one is precisely the definition of the mask polynomial of the multiset $m\cdot A$. If $m\mid N$, then 
\[\sum_{\substack{j\in A\\ jm\equiv g\bmod N}} X^g\equiv\sum_{\substack{j\in A\\ j\equiv gm^{-1}\bmod N/m}}X^g\equiv 
\sum_{j=0}^{N/m-1}\abs{A_{j\bmod N/m}}X^{jm}\bmod(X^N-1),\]
as desired.
\end{proof}

The Fourier transform of functions on $\ZZ_N$ is defined as
\[\hat{f}(x)=\sum_{d\in\ZZ_N} f(d)\ze_N^{-dx}.\]
The values of mask polynomials $A(X)$ on $N$th roots of unity have the following relation with
those of the Fourier transform of $\bs1_A$:
\[\wh{\bs 1}_A(d)=A(\ze_N^{-d}).\]

The reduced residues $\bmod N$ will be denoted as usual by $\ZZ_N^{\star}$. The group $\ZZ_N$ is partitioned into the subsets 
\begin{equation}\label{divclass}
d\ZZ_N^{\star}=\set{x\in\ZZ_N:\gcd(x,N)=d}=\set{x\in\ZZ_N:\ord(x)=N/d},
\end{equation}
called \emph{divisor classes}, where $d$ runs through the divisors of $N$. Since we will evaluate polynomials on $N$th roots of unity, we unavoidably have to use the cyclotomic polynomials $\Phi_d(X)$ for
$d\mid N$, which is the irreducible polynomial of $\ze_d$ over $\QQ$, and is factored as
\[\Phi_d(X)=\prod_{g\in\ZZ_d^{\star}}(X-\ze_d^g).\]
Therefore, divisibility of $A(X)\bmod(X^N-1)$ by cyclotomic polynomials of degree $d\mid N$, denoted by $\Phi_d(X)$, also makes sense as 
$X^N-1=\prod_{d\mid N}\Phi_d(X)$. $\Phi_d(X)\mid A(X)$ would then hold precisely
when $A(\ze_d)=0$, which would also imply that $A(\ze_d^g)=0$ for every $g\in\ZZ_N$ with $\gcd(d,g)=1$. From this, we readily deduce that the zero set
\[Z(A):=Z(\wh{\bs 1}_A)=\set{x\in\ZZ_N:A(\ze_N^{-x})=0}\]
is a union of divisor classes.

\bigskip\bigskip

\section{Tiles and spectral subsets}
\bigskip\bigskip

These notions are similarly defined when we work on a finite cyclic group.

\begin{defn}
 Let $A\ssq\ZZ_N$. We say that $A$ \emph{tiles} $\ZZ_N$ by translations, if there is some $T\ssq\ZZ_N$ such that any element of $\ZZ_N$ can be written uniquely as a sum of an element of $A$ and an element of $T$.
 $T$ is called the \emph{tiling complement} of $A$ and we write $A\oplus T=\ZZ_N$; we will also call $(A,T)$ a \emph{tiling pair}.
 We call $A$ \emph{spectral}, if there is some $B\ssq\ZZ_N$ with $\abs{A}=\abs{B}$ and the exponential functions 
 \[e_b(x)=\ze_N^{bx}\]
 are pairwise orthogonal on $A$, that is,
 \begin{equation}\label{orth}
\scal{e_b,e_{b'}}_A=\sum_{a\in A}\ze_N^{a(b-b')}=0,
\end{equation}
for every $b,b'\in B$, $b\neq b'$, or equivalently, the matrix
\[\frac{1}{\sqrt{\abs{A}}}\bra{\ze_N^{ab}}_{a\in A, b\in B}\]
is unitary.
 In this case, $B$ is called a \emph{spectrum} of $A$, and $(A,B)$ a \emph{spectral pair} of $\ZZ_N$.
\end{defn}
We note that \eqref{orth} is equivalent to the following matrix being unitary:



Using mask polynomials is very useful when describing tiling properties.
The following was proven in \cite{CM} (Lemma 1.3 and 3.1).

\begin{lemma}\label{tilingpair}
 Let $A,T$ be multisets in $\ZZ_N$. Then, $A$ and $T$ are proper 
 sets and form a tiling pair if and only if
 \[A(X)T(X)\equiv 1+X+\dotsb+X^{N-1}\bmod(X^N-1).\]
 Furthermore, if $m$ is prime to $\abs{A}$, the above equation also implies
 \[A(X^m)T(X)\equiv 1+X+\dotsb+X^{N-1}\bmod(X^N-1),\]
 i.e. $m\cdot A$ is also a tiling complement of $T$.
\end{lemma}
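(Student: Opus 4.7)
The first statement is a direct coefficient-matching argument. Expanding $A(X)T(X)=\sum_{a,t}m_am_t X^{a+t}$ and reducing modulo $X^N-1$, the coefficient of $X^g$ becomes the convolution $(\bs1_A*\bs1_T)(g)=\sum_{a+t\equiv g\pmod N}m_am_t$. Requiring this to equal $1$ for every $g\in\ZZ_N$ forces, since the summands are nonnegative integers whose total is $1$, that exactly one pair $(a,t)$ contributes $m_am_t=1$ -- so $m_a=m_t=1$ with $a+t\equiv g$ -- while all other pairs contribute $0$. In particular, if some element $a_0$ had $m_{a_0}\geq 2$, then for any $t$ in the (nonempty) support of $T$ the coefficient of $X^{a_0+t\bmod N}$ would already be $\geq 2$. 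So both $A$ and $T$ must be regular, and the uniqueness just established is precisely $A\oplus T=\ZZ_N$; the converse direction is immediate on expanding the product.

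For the second statement, I would first reduce to the case of a single prime. Factoring $m=p_1p_2\dotsm p_\ell$ into primes (with repetition), the hypothesis $\gcd(m,\abs{A})=1$ says every $p_i$ is coprime to $\abs{A}$. Assuming the conclusion for each single prime $p\nmid\abs{A}$, applying it successively to $A$, then to $p_1\cdot A$, then to $(p_1p_2)\cdot A$, and so on, yields the statement for the full $m$; at each stage the intermediate multiset is forced to be regular (by the prime case just applied) and hence still has cardinality $\abs{A}$, so the coprimality hypothesis propagates. It therefore suffices to show: if $A(X)T(X)\equiv 1+X+\dotsb+X^{N-1}\pmod{X^N-1}$ and $p\nmid\abs{A}$ is prime, then $A(X^p)T(X)\equiv 1+X+\dotsb+X^{N-1}\pmod{X^N-1}$.

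The engine of the prime case is the Frobenius identity: in $\FF_p[X]$, $A(X^p)\equiv A(X)^p$, whence
\[A(X^p)T(X)\equiv A(X)^{p-1}\cdot A(X)T(X)\equiv A(X)^{p-1}\bigl(1+X+\dotsb+X^{N-1}\bigr)\pmod{p,\,X^N-1}.\]
Since $A(X)\equiv A(1)=\abs{A}\pmod{X-1}$ and $\abs{A}^{p-1}\equiv 1\pmod p$ by Fermat's little theorem, $A(X)^{p-1}-1$ lies in the ideal $(p,\,X-1)\subseteq\ZZ[X]$. Multiplying through by $1+X+\dotsb+X^{N-1}$ and using the identity $(X-1)(1+X+\dotsb+X^{N-1})=X^N-1$ puts the product $\bigl(A(X)^{p-1}-1\bigr)\bigl(1+X+\dotsb+X^{N-1}\bigr)$ into the ideal $(p,\,X^N-1)\ZZ[X]$, and therefore
\[A(X^p)T(X)\equiv 1+X+\dotsb+X^{N-1}\pmod{p,\,X^N-1}.\]

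The final and, in my view, most delicate step is to lift this congruence from modulus $(p,X^N-1)$ to modulus $X^N-1$ alone. Let $c_g=\#\{(a,t)\in A\times T:pa+t\equiv g\pmod N\}$ be the coefficient of $X^g$ in $A(X^p)T(X)\bmod(X^N-1)$. Then $c_g\geq 0$ and $\sum_{g=0}^{N-1}c_g=\abs{A}\abs{T}=N$. The mod-$p$ congruence above forces $c_g\equiv 1\pmod p$, and combined with $c_g\geq 0$ and $p\geq 2$ this rules out $c_g=0$ and gives $c_g\geq 1$. A nonnegative integer sequence of length $N$ with each term $\geq 1$ and total $N$ must be the all-ones sequence; hence $A(X^p)T(X)\equiv 1+X+\dotsb+X^{N-1}\pmod{X^N-1}$, as required. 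The hypothesis $\gcd(m,\abs{A})=1$ enters the argument exactly once, in the appeal to Fermat's little theorem, and everything else is a purely polynomial manipulation supported by the nonnegativity of mask-polynomial coefficients.
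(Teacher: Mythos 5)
Your proof is correct. The paper does not prove this lemma itself---it cites Coven--Meyerowitz (Lemmas 1.3 and 3.1) and merely remarks that the prime case of the second statement should be iterated over the prime factors of $m$---and your argument (coefficient matching for the first part; the Frobenius identity $A(X^p)\equiv A(X)^p \bmod p$ plus Fermat and the nonnegativity/counting step for the prime case, then the iteration, which is valid since each intermediate multiset $(p_1\dotsm p_i)\cdot A$ is again a proper set of size $\abs{A}$) is essentially the cited Coven--Meyerowitz proof.
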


\begin{rem}
Lemma 3.1 actually proves the second part when $m=p$ a prime; however, one could use this repetitively for all primes dividing $m$ and obtain
the above result.
\end{rem}

Spectrality and the tiling property impose some structural properties on the difference set $A-A$.

\begin{thm}\label{mainref}
 Let $A\ssq\ZZ_N$.
 \begin{enumerate}[{\bf(i)}] 
  \item If $A$ has a spectrum $B\ssq\ZZ_N$, if and only if $\abs{A}=\abs{B}$ and $B-B\ssq\set{0}\cup Z(A)$. \label{i}
  \item $A$ tiles $\ZZ_N$ by $T\ssq\ZZ_N$, if and only if $N=\abs{A}\abs{T}$ and $(A-A)\cap(T-T)=\set{0}$. \label{ii}
  \item If $(A,B)$ is a spectral pair of $\ZZ_N$ and $m\in\ZZ_N^{\star}$, then $(A,mB)$ is also a spectral pair.
 \end{enumerate}
\end{thm}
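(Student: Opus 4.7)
My plan is to translate each of the three statements into a condition on the mask polynomial $A(X)$ or on the convolution $\bs1_A\ast\bs1_T$, and then use the observation already made at the end of Section 2 that $Z(A)$ is a union of divisor classes and therefore invariant under multiplication by units.

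For \textbf{(i)}, I would start from the defining identity
\[
\scal{e_b,e_{b'}}_A=\sum_{a\in A}\ze_N^{a(b-b')}=A(\ze_N^{b-b'}),
\]
so pairwise orthogonality of $\{e_b\}_{b\in B}$ on $A$ is equivalent to the assertion that $A(\ze_N^{b-b'})=0$ whenever $b\neq b'$ in $B$. Now $b-b'$ and $b'-b$ lie in the same divisor class of $\ZZ_N$, and $Z(A)$ is a union of such classes; hence $A(\ze_N^{b-b'})=0$ is equivalent to $b-b'\in Z(A)$. This shows that mutual orthogonality is equivalent to $B-B\ssq\set{0}\cup Z(A)$. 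Completeness is then automatic as soon as $|B|=|A|$: the $|A|$ pairwise orthogonal (hence linearly independent) functions $\{e_b\}_{b\in B}$ must span $L^2(A)$, which is $|A|$-dimensional.

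For \textbf{(ii)}, I would argue combinatorially. The identity $A\oplus T=\ZZ_N$ asserts that the addition map $A\times T\to\ZZ_N$, $(a,t)\mapsto a+t$, is a bijection. If this holds, then $|A||T|=N$, and any equality $a+t=a'+t'$ forces $a=a'$, $t=t'$, so $a-a'=t'-t$ must be $0$; equivalently, $(A-A)\cap(T-T)=\set{0}$. Conversely, the condition $(A-A)\cap(T-T)=\set{0}$ makes the addition map injective (a coincidence $a+t=a'+t'$ with $(a,t)\neq(a',t')$ would produce a nonzero common element of $A-A$ and $T-T$), and the cardinality equality $|A||T|=N$ then forces surjectivity.

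For \textbf{(iii)}, I would apply \textbf{(i)} twice. Since $(A,B)$ is a spectral pair, $B-B\ssq\set{0}\cup Z(A)$. For $m\in\ZZ_N^{\star}$ one has $mB-mB=m(B-B)$, and it suffices to show $mZ(A)\ssq Z(A)$. This is where the structural remark from Section~2 is used: for any $x\in\ZZ_N$, $\gcd(mx,N)=\gcd(x,N)$ when $\gcd(m,N)=1$, so multiplication by $m$ preserves every divisor class $d\ZZ_N^{\star}$; since $Z(A)$ is a union of such classes, $mZ(A)=Z(A)$. Thus $mB-mB\ssq\set{0}\cup Z(A)$, and since $|mB|=|B|=|A|$, part \textbf{(i)} yields that $(A,mB)$ is also a spectral pair.

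There is no serious obstacle in any of these arguments; they are essentially bookkeeping once the Fourier-analytic dictionary and the divisor-class structure of $Z(A)$ are in place. The only point requiring a bit of care is in \textbf{(i)}, where one must verify that $Z(A)$ is symmetric under $x\mapsto -x$ (which it is, since $-1\in\ZZ_N^{\star}$ and divisor classes are preserved by units, as used again in \textbf{(iii)}), so that the condition $b-b'\in Z(A)$ is automatically symmetric in $b$ and $b'$.
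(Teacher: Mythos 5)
Your proof is correct and follows essentially the same approach as the paper: (i) rests on the dimension count and the orthogonality condition $A(\ze_N^{b-b'})=0$, (ii) is the same combinatorial bijection argument, and (iii) uses invariance of $Z(A)$ (as a union of divisor classes) under multiplication by units, exactly as the paper does via $\Phi_{N/d}(X)\mid A(X)$. You are somewhat more explicit about the converse directions in (i) and (ii) and about the symmetry of $Z(A)$ under $x\mapsto -x$, but these are the same ideas spelled out rather than a different route.
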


\begin{proof}
\begin{enumerate}[{\bf(i)}]
\item This almost follows by definition; the space of functions $f:A\to \CC$ has dimension $\abs{A}$, therefore a spectrum must
have the same cardinality. Furthermore, by \eqref{orth} we get that any $b,b'\in B$ with $b\neq b'$ must satisfy $b-b'\in Z(A)$.
\item Since every element of $\ZZ_N$ can be expressed uniquely as $a+t$ with $a\in A$, $t\in T$, we get a bijection between $\ZZ_N$ and
$A\times T$, proving $N=\abs{A}\abs{T}$. Furthermore, suppose that $a,a'\in A$ and $t,t'\in T$ satisfy $a-a'=t-t'$. Then, $a+t'=a'+t$,
so by uniqueness of representation of $a+t'$ as an element of $A$ and an element of $T$, we must have $a=a'$ and $t=t'$, proving the desired fact.
\item We will show that $m(b-b')\in Z(A)$ for every $m\in\ZZ_N^{\star}$ and $b,b'\in B$ with $b\neq b'$. Assume that $b-b'\in d\ZZ_N^{\star}$,
hence by \eqref{orth} we get $A(\ze_N^{dg})=0$, where $b-b'\equiv dg\bmod N$, with $g\in\ZZ_N^{\star}$. Then, $\Phi_{N/d}(X)\mid A(X)$, thus
\[0=A(\ze_N^{dgm})=A(\ze_N^{m(b-b')}),\]
completing the proof.\qedhere
\end{enumerate}
\end{proof}

Another way to express Theorem \ref{mainref}\ref{i} is the following.

\begin{cor}\label{specord}
 Let $(A,B)$ be a spectral pair in $\ZZ_N$. Then, $A(\ze_{\ord(b-b')})=0$, for all $b,b'\in B$ with $b\neq b'$.
\end{cor}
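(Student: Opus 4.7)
The plan is to peel back the definitions in Theorem \ref{mainref}\ref{i} and combine them with the observation from Section 2 that the zero set $Z(A)$ is a union of divisor classes. By part \ref{i} of that theorem, any two distinct elements $b,b'\in B$ satisfy $b-b'\in Z(A)$, so the task reduces to translating the single vanishing $A(\ze_N^{-(b-b')})=0$ into the cleaner statement $A(\ze_{\ord(b-b')})=0$.

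To that end I would set $n:=\ord(b-b')$ and $d:=N/n$. By \eqref{divclass}, the divisor class $d\ZZ_N^{\star}$ is exactly the set of elements of $\ZZ_N$ of order $n$, hence contains $b-b'$; it also contains $d$ itself, since $\gcd(1,n)=1$ witnesses $d=d\cdot 1\in d\ZZ_N^{\star}$. Because $Z(A)$ is a union of divisor classes, the inclusion $b-b'\in Z(A)$ forces the whole class $d\ZZ_N^{\star}$ into $Z(A)$, and in particular $d\in Z(A)$. This gives $0=A(\ze_N^{-d})=A(\ze_n^{-1})$, and since $\ze_n^{-1}$ is a primitive $n$-th root of unity, $\Phi_n(X)$ divides $A(X)$ modulo $X^N-1$. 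Consequently $A(\ze_n)=0$, which is exactly the claim.

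There is essentially no obstacle; the corollary simply repackages the Galois-invariance of $Z(A)$ already noted in Section 2. As an equivalent alternative, one can invoke Theorem \ref{mainref}(iii) directly: choosing a unit $m\in\ZZ_N^{\star}$ with $m(b-b')\equiv d\pmod{N}$ replaces $B$ by the spectrum $mB$, so the relation $b-b'\in Z(A)$ transfers to $d\in Z(A)$, after which the same evaluation at $\ze_n^{-1}$ concludes.
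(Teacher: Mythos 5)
Your argument is correct and follows essentially the same route as the paper's proof: appeal to Theorem \ref{mainref}(i) to place $b-b'$ in $Z(A)$, use the fact that $Z(A)$ is a union of divisor classes (equivalently, Galois-invariance of the roots of $A(X)$ among $N$th roots of unity), and conclude that $A$ vanishes at the primitive $\ord(b-b')$-th root of unity. The only cosmetic difference is that you single out the representative $d=N/n$ of the divisor class and track the sign in the exponent explicitly, whereas the paper works directly with the representative $\ze_N^{b-b'}$; both reduce to $\Phi_{\ord(b-b')}(X)\mid A(X)$.
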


\begin{proof}
 This follows from Theorem \ref{mainref}\ref{i}, as $Z(A)$ contains all divisor classes $d\ZZ_N^{\star}$ for which there are $b,b'\in B$ satisfying $b-b'\in d\ZZ_N^{\star}$, so by \eqref{divclass}
 $\ord(b-b')=N/d$, or equivalently, $\ze_N^{b-b'}$ is a primitive $N/d$-th root of unity, yielding
 \[A(\ze_{N/d})=A(\ze_{\ord(b-b')})=0,\]
 as desired.
\end{proof}

Denote by $S^N$ be the set of prime powers dividing $N$, and let $S_A^N=\set{s\in S^N: A(\ze_s)=0}$. 
We define the following properties:

\begin{enumerate}[{\bf(T1)}]
 \item $A(1)=\prod_{s\in S_A^N} \Phi_s(1)$ \label{t1}
 \item Let $s_1,s_2,\dotsc,s_k\in S_A^N$ be powers of different primes. Then $\Phi_{s}(X)\mid A(X)$, where $s=s_1\dotsm s_k$. \label{t2}
\end{enumerate}

These properties implicitly assume that the cyclic group under question has order $N$ (see also \cite{KolMat}). 
Sometimes, a set $A\ssq\ZZ_N$ might be reduced $\bmod M$, for some $M\mid N$, $M<N$, when no two elements of $A$ are
congruent $\bmod M$. In this case, we will explicitly mention the underlying group and say for example, that ``$A$ satisfies \ref{t1} and \ref{t2} in $\ZZ_M$''. For subsets of $\ZZ$, these properties were first defined
by Coven and Meyerowitz \cite{CM}, to study tiling subsets of integers. They proved that \ref{t1} and \ref{t2} (for integers) imply that $A$ tiles $\ZZ$; furthermore, if $\abs{A}$ is divided by at most two distinct
primes, then \ref{t1} and \ref{t2} are equivalent to tiling. On the other hand, \L{}aba \cite{Laba} proved that it also
implies that $A$ is spectral. Here, we will show that the same situation carries to the finite cyclic case\footnote{Although \ref{I} and \ref{II} can easily be proven using the arguments for subsets
of integers, we provide a self-contained proof here. In particular, the spectrum constructed in \eqref{labaspectrum} is very similar to the one constructed in the proof of Theorem 1.5 in \cite{Laba}.}.

\begin{thm}\label{LCM}
 Let $A\ssq\ZZ_N$.
 \begin{enumerate}[{\bf(I)}]
  \item If $A$ satisfies \ref{t1} and \ref{t2}, then it tiles $\ZZ_N$. \label{I}
  \item If $A$ satisfies \ref{t1} and \ref{t2}, then it has a spectrum. \label{II}
  \item If $N$ is divisible by at most two distinct primes, then $A$ tiles $\ZZ_N$ if and only if it satisfies \ref{t1} and \ref{t2}. \label{III}
 \end{enumerate}
\end{thm}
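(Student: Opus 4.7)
The plan for Theorem \ref{LCM} breaks naturally into three parts. Parts \ref{I} and \ref{II} are existence statements, which I would establish by explicit constructions in $\ZZ_N$ modeled on the classical arguments of Coven--Meyerowitz and \L{}aba for subsets of $\ZZ$; part \ref{III} then follows by combining \ref{I} with a converse that is a cyclic-group analogue of the Coven--Meyerowitz theorem and is valid only when $N$ has at most two distinct prime factors. Both constructions for \ref{I}, \ref{II} use the CRT decomposition $\ZZ_N \cong \bigoplus_{p\mid N}\ZZ_{p^{m_p}}$, where $m_p = v_p(N)$, and for each prime $p \mid N$ the listing of $p$-power exponents of $S_A$ as $a_{p,1} < \dotsb < a_{p,\al_p}$, with complementary exponents $\set{b_{p,j}} = \set{1,\dotsc,m_p}\sm\set{a_{p,i}}$.

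For \ref{II} I would set
\begin{equation*}
B_p = \Big\{\sum_{i=1}^{\al_p} c_i\,p^{m_p-a_{p,i}} : 0 \leq c_i \leq p-1\Big\} \ssq \ZZ_{p^{m_p}},
\end{equation*}
and take $B \ssq \ZZ_N$ to correspond to $\prod_p B_p$ under CRT. Property \ref{t1} immediately gives $\abs{B} = \prod_p p^{\al_p} = \abs{A}$. For $b \neq b'$ in $B$, a $p$-adic valuation argument on each coordinate shows $\ord_{\ZZ_{p^{m_p}}}(b_p - b'_p) \in \set{1, p^{a_{p,1}}, \dotsc, p^{a_{p,\al_p}}}$, hence by CRT $\ord(b-b')$ is a product of elements of $S_A$ lying over distinct primes. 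Property \ref{t2} then forces $\Phi_{\ord(b-b')} \mid A(X)$, so $A(\ze_{\ord(b-b')}) = 0$, and Theorem \ref{mainref}\ref{i} identifies $B$ as a spectrum of $A$.

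For \ref{I} I would mirror this construction using the complementary exponents:
\begin{equation*}
T_p = \Big\{\sum_j d_j\,p^{b_{p,j}-1} : 0 \leq d_j \leq p-1\Big\} \ssq \ZZ_{p^{m_p}},
\end{equation*}
with $T \ssq \ZZ_N$ corresponding to $\prod_p T_p$ under CRT, so that $\abs{T} = \prod_p p^{m_p-\al_p} = N/\abs{A}$. By Lemma \ref{tilingpair} it suffices to verify $A(\ze_d)T(\ze_d) = 0$ for every $d\mid N$ with $d>1$. Writing $d = \prod_p p^{e_p}$: if every $p^{e_p}$ with $e_p > 0$ lies in $S_A$, then \ref{t2} yields $A(\ze_d) = 0$; otherwise some such $e_p$ equals $b_{p,j}$ for a $j$, and the $j$-th factor in the product expression for $T_p(\ze_{p^{e_p}})$ is $1 + Y + \dotsb + Y^{p-1}$ at a primitive $p$-th root of unity, hence vanishes.

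For \ref{III}, one direction is exactly \ref{I}. For the converse (``tiling implies \ref{t1} and \ref{t2}'' when $N = p^m$ or $N = p^m q^n$), I would adapt the Coven--Meyerowitz argument to $\ZZ_N$: property \ref{t1} comes from matching $A(1)T(1) = N$ against the prime-power cyclotomic contributions $\Phi_{p^a}(1) = p$, together with the fact that every such $\Phi_s$ divides either $A(X)$ or $T(X)$. Property \ref{t2} is the harder step: assuming $p^a, q^b \in S_A$ but $\Phi_{p^a q^b}\nmid A$, one has $\Phi_{p^a q^b}\mid T$, and one must derive a contradiction from the tiling identity via the Lam--Leung structure of vanishing sums of $p^a q^b$-th roots of unity, which is sufficiently rigid in the two-prime case to force coefficient relations incompatible with $T$ being a $\set{0,1}$-polynomial. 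This composite-divisor step is the main obstacle, and it is precisely where the restriction to at most two distinct prime factors enters.
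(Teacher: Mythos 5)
Your constructions for parts \ref{I} and \ref{II} are essentially the paper's, repackaged through CRT. The spectrum the paper builds, $\bigl\{\sum_{s\in S_A^N} k_s\tfrac{N}{s}\bigr\}$, is exactly the CRT unpacking of your $\prod_p B_p$ (the CRT idempotent for the $p$-component is a unit multiple of $N/p^{m_p}$, so your $\sum_i c_i p^{m_p-a_{p,i}}$ corresponds to $\sum_i c_i\tfrac{N}{p^{a_{p,i}}}$ up to a unit automorphism), and the set underlying the paper's tiling complement $\prod_{s\in S^N\sm S_A^N}\Phi_s(X^{N_s})$ is exactly your $T=\prod_p T_p$. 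The only cosmetic difference is ordering: the paper first writes $T(X)$ as a polynomial and then deduces from the tiling identity that it is the mask polynomial of a genuine subset, while you construct the set directly; the verification that $A(\ze_d)T(\ze_d)=0$ for every $1<d\mid N$ (via \ref{t2} on the $S_A$ side and the vanishing factor on the $T$ side) is the same in both.

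For part \ref{III}, the paper itself gives no proof and simply cites \cite{MK}, so there is nothing concrete to compare your sketch against. Your outline of the \ref{t1} direction (matching $A(1)T(1)=N$ against $\prod\Phi_s(1)$ using that each prime-power $\Phi_s$ divides $A$ or $T$) is the standard one and is fine. For the \ref{t2} direction you yourself flag the composite-divisor step as ``the main obstacle'' and leave it unresolved, so that part of the proposal remains a plan rather than a proof; note also that \cite{CM} (and its cyclic adaptation in \cite{MK}) do not get there by appealing to Lam--Leung rigidity in the way you gesture at, but by a rather more delicate combinatorial argument involving dilates $m\cdot T$ of the tiling complement (in the spirit of Lemma \ref{tilingpair}) and cyclotomic polynomials modulo primes, so if you want to complete \ref{III} you would be better served following that route than the one you hint at.
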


\begin{proof}
 Suppose that $A\ssq\ZZ_N$ satisfies \ref{t1} and \ref{t2}. Denote by $N_s$ the maximal divisor of $N$ that is prime to $s$. For example, if
 \[N=p_1^{\al_1}p_2^{\al_2}\dotsm p_k^{\al_k},\]
 then 
 \[N_{p_1^m}=p_2^{\al_2}\dotsm p_k^{\al_k}, \;\;\;\; \forall m \text{ with } 1\leq m\leq \al_1.\]
 Consider the polynomial
 \[T(X)=\prod_{s\in S^N\sm S_A^N}\Phi_s(X^{N_s}).\]
 By construction, $S_T^N=S^N\sm S_A^N$ and $\abs{A}\abs{T}=N$. Next, we will show that $A(X)T(X)$ vanishes on any $N$th root of unity. Let $X=\ze_d$, $d\mid N$. We write
 \[d=p_1^{\be_1}\dotsm p_k^{\be_k},\]
 with $0\leq \be_j\leq \al_j$, $1\leq j\leq k$. If $s=p_j^{\be_j}\in S_T^N$ for some $j$, so that $d=s\cdot d_s$, then $\Phi_s(X^{N_s})$ divides $T(X)$ and $d_s$ divides $N_s$, therefore
 \[\Phi_s(\ze_d^{N_s})=\Phi_s(\ze_s^g)=0,\]
 for $g=N_s/d_s$, which is prime to $s$, yielding $T(\ze_d)=0$. Otherwise, every $p_j^{\be_j}\in S_A^N$, and since $A$ satisfies \ref{t2}, we get $\Phi_d(X)\mid A(X)$, or equivalently $A(\ze_d)=0$. Combined with 
 $\abs{A}\abs{T}=N$, this implies
 \[A(X)T(X)\equiv1+X+\dotsb+X^{N-1}\bmod(X^N-1),\]
 thus $T(X)$ is the mask polynomial of a set $T\ssq\ZZ_N$, which is a tiling complement of $A$, proving \ref{I}. 
 
 For \ref{II}, we define the set $B\ssq\ZZ_N$ consisting of all elements of the form
 \begin{equation}\label{labaspectrum}
 \sum_{s\in S_A^N}k_s\frac{N}{s},
 \end{equation}
 where for each $s\in S_A^N$, $k_s$ ranges through the set $\set{0,1,\dotsc,p-1}$, where $p=\Phi_s(1)$ is the prime dividing $s$. Different choices of $k_s$, $s\in S_A^N$ give different elements, yielding $\abs{A}=\abs{B}$,
 since $A$ satisfies \ref{t1}. Moreover, an element in $B-B$ has the form \eqref{labaspectrum}, with the difference that $\abs{k_s}<\Phi_s(1)$ holds instead of $0\leq k_s\leq \Phi_s(1)-1$. The order of $k_s\frac{N}{s}$ with
 $\abs{k_s}<\Phi_s(1)$ is exactly $s$. Based on this simple fact, the order of a nonzero element of $B-B$ is $s_1s_2\dotsm s_{\ell}$, where $s_1,\dotsc,s_{\ell}\in S_A^N$ powers of distinct primes. Since $A$ satisfies
 \ref{t2}, we will then have $A(\ze_{\ord(b-b')})=0$ for every $b,b'\in B$, $b\neq b'$, so $B$ is the spectrum of $A$ by Theorem \ref{mainref}\ref{i} and Corollary \ref{specord}.
 
 A proof for part \ref{III} is in \cite{MK}.
\end{proof}

Using \ref{II} and \ref{III} we can easily deduce:

\begin{thm}\label{tspmqn}
 If $A$ tiles $\ZZ_N$, where $N$ is divisible by at most two distinct primes, then $A$ has a spectrum.
\end{thm}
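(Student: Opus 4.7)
The plan is to obtain Theorem \ref{tspmqn} as an immediate corollary of Theorem \ref{LCM}, which does all of the real work. Specifically, the hypothesis that $N$ has at most two distinct prime factors is exactly the hypothesis of part \ref{III}, while the conclusion is exactly the conclusion of part \ref{II}, so the proof is just a concatenation of these two implications.

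Concretely, I would proceed as follows. First, since $A$ tiles $\ZZ_N$ and $N$ is divisible by at most two distinct primes, Theorem \ref{LCM}\ref{III} applies and yields that $A$ satisfies properties \ref{t1} and \ref{t2}. Second, knowing that \ref{t1} and \ref{t2} hold, Theorem \ref{LCM}\ref{II} applies (which has no restriction on the number of prime factors of $N$) and yields that $A$ has a spectrum, namely the set $B$ explicitly constructed in \eqref{labaspectrum}.

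There is no main obstacle here: the substance of the statement is fully absorbed in Theorem \ref{LCM}. The only thing one might wish to make explicit is that the equivalence in \ref{III} is the direction one actually uses, namely tiling $\Rightarrow$ (T1)+(T2); this is the one direction whose proof is deferred in the text to \cite{MK}, and hence one could optionally remark that this is where the two-prime hypothesis enters. Once that input is taken on faith, the passage to spectrality is purely formal, and the argument occupies at most two sentences.
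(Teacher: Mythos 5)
Your proposal is correct and coincides with the paper's own deduction: the theorem is stated there as an immediate consequence of Theorem \ref{LCM}, using part \ref{III} to get \ref{t1} and \ref{t2} from tiling under the two-prime hypothesis, and then part \ref{II} to produce a spectrum. Your remark that the two-prime hypothesis enters only through the direction of \ref{III} proved in \cite{MK} is accurate and matches the paper's structure.
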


\bigskip\bigskip
\section{Proof of Theorem \ref{mainthm2}}
\bigskip\bigskip

Here, we will test the tools of the previous section, to prove the Tiling$\Rar$Spectral direction in $\ZZ_N$, where $N=p_1^mp_2p_3\dotsm p_n$, where $p_1,\dotsc,p_n$ are distinct primes, and $m,n\geq2$ are integers.
We will do so by proving that \ref{t1} and \ref{t2} always hold for a tiling subset of $\ZZ_N$.

Suppose that $A\ssq\ZZ_N$ tiles $\ZZ_N$ by translations, so that $A\oplus T=\ZZ_N$. If $\gcd(\abs{A},\abs{T})=1$, then Lemma \ref{tilingpair} implies that
\begin{equation}\label{tilingM}
A(X)T(X^M)\equiv 1+X+\dotsb+X^{N-1}\bmod(X^N-1),
\end{equation}
where $M=\abs{A}$. Therefore, $M\cdot T$ is a proper set; moreover, $\abs{M\cdot T}=\abs{T}=N/M$ and $M\cdot T\ssq M\ZZ_N$, which yield $M\cdot T=M\ZZ_N$, i.e. $A$ tiles by the subgroup $M\ZZ_N$, or equivalently,
\[A(X)\equiv 1+X+\dotsb+X^{M-1}=\prod_{1<d\mid M}\Phi_d(X)\bmod(X^M-1).\]
From the last representation of $A(X)$ as a product of cyclotomic polynomials $\bmod(X^M-1)$, it is evident that $A$ satisfies \ref{t1} and \ref{t2} in $\ZZ_N$ and hence is a spectral subset of $\ZZ_N$.
We remark that the direction Tiling$\Rar$Spectral holds for every tiling subset $A\ssq\ZZ_N$ satisfying $\gcd(\abs{A},N/\abs{A})=1$, regardless of the primes dividing $N$, using the same argument.

We proceed to the difficult case, where $\gcd(\abs{A},\abs{T})>1$, and hence $\gcd(\abs{A},\abs{T})$ is a power of $p_1$. Let $M$ be the maximal divisor of $\abs{A}$ that is prime to $p_1$; in particular, 
$\gcd(M,\abs{T})=1$, hence as before, $M\cdot T$ is a proper set and also a tiling complement of $A$ by Lemma \ref{tilingpair}, so that \eqref{tilingM} holds. Next, suppose that
\[S_A^N=\set{p_1^{\ell_1},\dotsc,p_1^{\ell_r},p_2,\dotsc,p_k},\]
so that
\[S_{M\cdot T}^N=S^N\sm S_A^N=\set{p^{m_1},\dotsc,p^{m_s},p_{k+1},\dotsc},\]
where the sets of exponents $\set{\ell_1,\dotsc\ell_r}$ and $\set{m_1,\dotsc,m_s}$ form a partition of $\set{1,2,\dotsc,m}$, by \eqref{tilingM}. It is clear that $A$ satisfies \ref{t1}, as 
\[\abs{A}\cdot\abs{T}=N, \;\;\;\; p_1^rp_2\dotsm p_k\mid\abs{A}, \;\;\;\; p_1^sp_{k+1}\dotsm p_n\mid\abs{T}\]
imply that $\abs{A}=p_1^rp_2\dotsm p_k$
(besides, it is already known that
tiling always implies \ref{t1} in every $\ZZ_N$ or in $\ZZ$ \cite{CM}). For the property \ref{t2}, consider first $d$ to be a composite divisor of $p_2\dotsm p_k=M$. Then,
$T(\ze_d^M)=T(1)\neq0$, hence $A(\ze_d)=0$ by \eqref{tilingM}, verifying \ref{t2} when we consider primes among $p_2,\dotsc,p_k$. Next, consider a root of unity of order $p_1^{\ell_j}d$ ($d\mid M$ as before); we get
\[T(\ze_{p_1^{\ell_j}d}^M)=T(\ze_{p_1^{\ell_j}}^{M/d})=\sig(T(\ze_{p^{\ell_j}_1}))\neq0,\]
for some $\sig\in\Gal(\QQ(\ze_{p_1^{\ell_j}})/\QQ)$, since $p_1\nmid\frac{M}{d}$, whence
\[\Phi_{p^{\ell_j}_1d}(X)\mid A(X),\]
confirming \ref{t2} completely for $A$. By Theorem \ref{LCM}\ref{II}, $A$ is spectral, completing the proof of Theorem \ref{mainthm2}.

\bigskip\bigskip

\section{Vanishing sums of roots of unity}
\bigskip\bigskip

Let $G=\scal{\ze_N}$, the cyclic group generated by the standard $N$th root of unity. Consider the group ring $\ZZ[G]$, the ring of all formal integer linear combinations of $N$th roots of unity. It is known
that $\ZZ[G]\cong\ZZ[X]/(X^N-1)$, so we may represent an element of $\ZZ[G]$ by a polynomial with integer coefficients modulo $X^N-1$. $\ZZ[G]$ is equipped with a natural evaluation map, say $v$, which simply
evaluates the given sum. Considered as an element of $\ZZ[X]/(X^N-1)$, say $F(X)$, the effect of the evaluation map is $v(F)=F(\ze_N)$, as expected.

When this sum vanishes, we obtain some information on the structure of $F(X)$. We note that $\Phi_p(\ze_p)=0$ for $p$ prime, implies that
\[1+\ze_p+\dotsb+\ze_p^{p-1}\]
is a vanishing sum of roots of unity; the same holds true for any integer linear combination of sums of this form, multiplied by roots of unity. The following theorem, attributed to
R\'edei \cite{Redei50,Redei54}, de Bruijn \cite{deB53}, and Schoenberg \cite{Sch64}, shows that the converse is true as well. We will express it in polynomial notation.

\begin{thm}\label{vansums}
 Let $F(X)\in\ZZ[X]$ with $F(\ze_N)=0$. Then,
 \[F(X)\equiv \sum_{p\mid N, \; p \text{ prime}}F_p(X)\Php\bmod(X^N-1),\]
 where $F_p(X)\in\ZZ[X]$.
\end{thm}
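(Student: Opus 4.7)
The plan is to prove Theorem~\ref{vansums} by induction on $\omega(N)$, the number of distinct primes dividing $N$. For the base case $\omega(N)=1$, write $N=p^a$; then
\[
\Phi_p(X^{N/p}) \;=\; \Phi_p(X^{p^{a-1}}) \;=\; \Phi_{p^a}(X)
\]
is precisely the minimal polynomial of $\zeta_N$ over $\QQ$, and since it is monic, the vanishing $F(\zeta_N)=0$ forces $F(X)=F_p(X)\,\Phi_{p^a}(X)$ in $\ZZ[X]$, yielding the desired representation with a single summand.

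For the inductive step, I would first reduce to the case $F(X)=\Phi_N(X)$. This works because $\Phi_N$ is the monic minimal polynomial of $\zeta_N$, so $F(\zeta_N)=0$ forces $\Phi_N(X)\mid F(X)$ in $\ZZ[X]$, and multiplying any decomposition of $\Phi_N$ by the cofactor $F/\Phi_N$ gives one for $F$. Next, since
\[
\Phi_p(X^{N/p}) \;=\; \prod_{d\mid N,\ v_p(d)=v_p(N)}\Phi_d(X)
\]
is divisible by $\Phi_N(X)$, setting $\Psi_p(X):=\Phi_p(X^{N/p})/\Phi_N(X)\in\ZZ[X]$ converts the target identity
\[
\Phi_N(X)\equiv \sum_{p\mid N} F_p(X)\,\Phi_p(X^{N/p})\pmod{X^N-1}
\]
into the Bézout-style statement $1=\sum_p F_p(X)\,\Psi_p(X)$ modulo an appropriate factor of $X^N-1$. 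Any common root of all the $\Psi_p$ in $\overline{\QQ}$ would have to be a $\zeta_d$ with $v_p(d)=v_p(N)$ for every prime $p\mid N$, forcing $d=N$, which is excluded; hence $\gcd_p\Psi_p=1$ in $\QQ[X]$.

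To turn this rational Bézout identity into an integral one, I would execute the induction concretely by picking a prime $p\mid N$ and performing polynomial division of $F$ by the monic polynomial $\Phi_p(X^{N/p})$ in $\ZZ[X]$, obtaining $F(X)=G(X)\,\Phi_p(X^{N/p})+R(X)$ with $\deg R<N-N/p$ and $R(\zeta_N)=0$. The remainder is then analyzed by decomposing it across the cosets of $\langle N/p\rangle$ in $\ZZ_N$: writing $R(X)=\sum_{j=0}^{p-2}X^{jN/p}r_j(X)$ with $\deg r_j<N/p$ turns $R(\zeta_N)=0$ into a linear relation among the $r_j(\zeta_N)$ which, combined with the minimal polynomial structure of $\zeta_N^{N/p}=\zeta_p$, allows iterated application of the induction hypothesis to a subproblem with strictly fewer prime factors (ultimately reaching $N'=N/p^{v_p(N)}$, which satisfies $\omega(N')=\omega(N)-1$).

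The main obstacle is exactly the passage from rational to integer coefficients. Over $\QQ$ the result is almost immediate from the CRT decomposition $\QQ[X]/(X^N-1)\cong\prod_{d\mid N}\QQ(\zeta_d)$, under which each $\Phi_p(X^{N/p})$ vanishes only in the component $d=N$; hence the ideal generated by $\{\Phi_p(X^{N/p}):p\mid N\}$ fills every component except $\QQ(\zeta_N)$, matching the kernel of evaluation at $\zeta_N$. Integrality is not a formal consequence, however, because the inclusion $\ZZ[X]/(X^N-1)\hookrightarrow\prod_d\ZZ[\zeta_d]$ fails to be surjective; one must exploit the monicity of each $\Phi_p(X^{N/p})$ at every inductive step in order to keep the Bézout coefficients $F_p$ inside $\ZZ[X]$, which is the technical heart of the Rédei--de~Bruijn--Schoenberg theorem.
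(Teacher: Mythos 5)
First, a point of context: the paper does not prove this theorem at all; it states it and cites R\'edei, de Bruijn, and Schoenberg, so there is no in-paper argument for you to have matched or diverged from. Your proposal therefore has to stand on its own.

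Much of your setup is sound and worth keeping: the base case $N=p^a$ is correct; the reduction to $F=\Phi_N$ via monicity is legitimate; the factorization $\Php=\prod_{d\mid N,\ v_p(d)=v_p(N)}\Phi_d(X)$ and the observation that the $\Psi_p$ share no common root, hence generate the unit ideal in $\QQ[X]$, is exactly the right diagnosis of why the statement is immediate over $\QQ$; and you correctly pinpoint integrality as the real content. The gap is in the final paragraph, where the inductive step is supposed to happen. After dividing $F$ by $\Php$ and writing the remainder as $R(X)=\sum_{j=0}^{p-2}X^{jN/p}r_j(X)$ with $\deg r_j<N/p$ (note this is a block split by exponent range, not the coset decomposition you announce), you obtain $\sum_j\ze_p^j\,r_j(\ze_N)=0$. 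This relation carries essentially no information: since $\ze_p$ already lies in $\QQ(\ze_N)$, the elements $1,\ze_p,\dotsc,\ze_p^{p-2}$ are far from linearly independent over $\QQ(\ze_N)$, so you cannot conclude that the $r_j(\ze_N)$ vanish, nor extract a vanishing condition over $\ze_{N'}$ for any $N'$ with $\omega(N')<\omega(N)$. The claim that this ``allows iterated application of the induction hypothesis'' is precisely what needs proof and is not supplied.

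What actually makes the induction close, in the classical proofs, is a different decomposition built on linear disjointness rather than on a degree-block split. Write $N=p^am$ with $\gcd(p,m)=1$; by the Chinese Remainder Theorem every $N$th root of unity appearing in $F(\ze_N)$ can be written as $\ze_{p^a}^u\ze_m^v$, and grouping by the power of $\ze_{p^a}$ turns $F(\ze_N)=0$ into the vanishing of a polynomial $\sum_u g_u(\ze_m)Y^u\in\ZZ[\ze_m][Y]$ at $Y=\ze_{p^a}$ --- crucially, with coefficients in $\ZZ[\ze_m]$, not $\ZZ[\ze_N]$. Because $\QQ(\ze_{p^a})$ and $\QQ(\ze_m)$ are linearly disjoint, $\Phi_{p^a}(Y)$ is the minimal polynomial of $\ze_{p^a}$ over $\QQ(\ze_m)$, and since it is monic in $\ZZ[Y]$, division in $\ZZ[\ze_m][Y]$ extracts it with an integral quotient. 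Substituting $Y=X^m$ gives $\Phi_{p^a}(X^m)=\Php$, and what remains is a vanishing condition at $\ze_m$, i.e.\ at a root of unity whose order has one fewer prime factor, so the induction on $\omega(N)$ proceeds. Your proposal has the right overall shape --- induction plus monicity --- but the step that delivers integrality must exploit this linear-disjointness structure, which the block decomposition of $R$ does not see.
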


When $N$ is divisible by at most two primes and $F(X)$ has nonnegative coefficients, we obtain something stronger. This is due to Lam and Leung \cite{LL}.

\begin{thm}\label{vansumspos}
 Let $F(X)\in\ZZ_{\geq0}[X]$. Then, $F(\ze_N)=0$, where $N=p^mq^n$ and $p, q$ primes, if and only if 
 \[F(X)\equiv P(X)\Php+Q(X)\Phq\bmod(X^N-1),\]
 for some $P(X),Q(X)\in\ZZ_{\geq0}[X]$. If $F(\ze_N^{p^k})\neq0$ (respectively, $F(\ze_N^{q^{\ell}})\neq0$) for some $1\leq k\leq m$ (resp. $1\leq \ell\leq n$), then we cannot have $P(X)\equiv0\bmod(X^N-1)$
 (resp. $Q(X)\equiv0\bmod(X^N-1)$).
\end{thm}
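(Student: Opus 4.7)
The forward direction is immediate: $\Php$ evaluated at $X=\ze_N$ equals $\Phi_p(\ze_p)=0$, and similarly $\Phq$ vanishes at $\ze_N$, so any polynomial of the stated form vanishes at $\ze_N$.

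For the converse, which is the substantive content of the Lam--Leung theorem, my plan is to induct on $F(1)$, the total coefficient sum of $F$, with the trivial base case $F\equiv 0\bmod(X^N-1)$ (take $P=Q=0$). The inductive step rests on the following key lemma: if $F\in\ZZ_{\geq 0}[X]/(X^N-1)$ is nonzero with $F(\ze_N)=0$, then there exists $j\in\ZZ_N$ such that the monomials $X^j,X^{j+N/p},\dotsc,X^{j+(p-1)N/p}$ all appear in $F$ with strictly positive coefficients, \emph{or} an analogous $j'$ works for the $q$-cycle. Once this lemma is in hand, one subtracts $X^j\Php$ (accumulated into $P$) or $X^{j'}\Phq$ (accumulated into $Q$); the resulting polynomial has strictly smaller coefficient sum, still lies in $\ZZ_{\geq 0}[X]/(X^N-1)$, and still vanishes at $\ze_N$, so the induction closes and produces $P,Q\in\ZZ_{\geq 0}[X]$.

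The main obstacle is proving this key lemma, and this is where the two-prime hypothesis becomes essential. I would begin from Theorem~\ref{vansums}, which already yields \emph{some} representation $F\equiv F_p\Php+F_q\Phq\bmod(X^N-1)$ with $F_p,F_q\in\ZZ[X]$ (possibly having negative coefficients), and then argue that among all such representations, one may be chosen with $F_p,F_q\in\ZZ_{\geq 0}[X]$. A clean framework is a secondary induction on $m+n$ with base case $m=n=1$ (i.e.\ $N=pq$), where the structure of $\ZZ[\ze_{pq}]$ makes the statement verifiable directly: the $pq$ monomials $\ze_N^i$ satisfy only the relations generated by $\Php$ and $\Phq$, so any nonnegative relation is a nonnegative combination of shifts of these two. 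The inductive step exploits the fact that for only two primes, the order-$p$ subgroup $\langle N/p\rangle$ and the order-$q$ subgroup $\langle N/q\rangle$ of $\ZZ_N$ interact in a very restricted way, making the combinatorial exchange needed to absorb negativity from $F_p$ into $F_q$ (and vice versa) feasible; for three or more primes, the analogous statement is known to fail.

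For the final nonvanishing assertion, suppose $P(X)\equiv 0\bmod(X^N-1)$, so that $F(X)\equiv Q(X)\Phq\bmod(X^N-1)$. Then for any $1\leq k\leq m$, substituting $X=\ze_N^{p^k}$ gives $X^{N/q}=\ze_q^{p^k}$, a primitive $q$-th root of unity since $\gcd(p^k,q)=1$, whence $\Phi_q(\ze_q^{p^k})=0$ and hence $F(\ze_N^{p^k})=0$. Contrapositively, if $F(\ze_N^{p^k})\neq 0$ for some $1\leq k\leq m$, then $P\not\equiv 0\bmod(X^N-1)$; the symmetric argument handles $Q$.
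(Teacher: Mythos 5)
The paper does not prove this theorem at all: it is stated as Theorem~\ref{vansumspos} with an attribution to Lam and Leung \cite{LL}, so there is no in-paper argument to compare against, and any proof you supply is being measured against the literature rather than the text.

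Within your own write-up, the forward implication and the closing nonvanishing claim are fine: $\Php$ and $\Phq$ both vanish at $\ze_N$, and your substitution $X=\ze_N^{p^k}$ into a hypothetical $F\equiv Q(X)\Phq$ correctly forces $F(\ze_N^{p^k})=0$. The difficulty is entirely in the converse, and there you have a genuine gap rather than a proof. Your ``key lemma'' (that a nonzero nonnegative $F$ with $F(\ze_N)=0$ must contain a full $p$-cycle or a full $q$-cycle in its support) is not a reduction of the theorem but essentially the theorem itself: granting it, the induction on $F(1)$ is routine, and conversely the theorem immediately yields the lemma. Your proposed proof of the key lemma is then to show that in the Rédei--de Bruijn--Schoenberg decomposition $F\equiv F_p\Php+F_q\Phq$ one may always choose $F_p,F_q\in\ZZ_{\geq0}[X]$; but that assertion \emph{is} the nontrivial content of the Lam--Leung theorem, so at this point the argument has become circular, and the outer induction on $F(1)$ is doing no work. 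Moreover, the two steps you offer in its support are not arguments: the base case $N=pq$ is asserted (``the $pq$ monomials satisfy only the relations generated by $\Php$ and $\Phq$, so any nonnegative relation is a nonnegative combination'' conflates the $\ZZ$-lattice of relations with its nonnegative cone, and the latter needs a real computation --- e.g.\ that the coefficient array $(a_{i,j})$ indexed by $\ZZ_p\times\ZZ_q$ has the additive form $a_{i,j}=c_i+b_j$, from which one shifts $c$ and $b$ to make both nonnegative), and the inductive step ``exploiting the restricted interaction of the two subgroups'' is a single sentence with no combinatorial or algebraic content. As it stands, the decisive implication is unproved.

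For reference, the standard proof does not proceed by induction on $m+n$ at all: one uses the tensor decomposition $\QQ(\ze_N)\cong\QQ(\ze_{p^m})\otimes_\QQ\QQ(\ze_{q^n})$, writes the exponents via CRT as $(i_0+i_1p^{m-1},j_0+j_1q^{n-1})$, and observes that linear independence over $\QQ$ forces, for each fixed $(i_0,j_0)$, the $p\times q$ block $(a_{i_0,i_1,j_0,j_1})_{i_1,j_1}$ to have vanishing $2\times2$ alternating sums, hence the additive form $c_{i_1}+b_{j_1}$; the nonnegativity of the block then lets you shift to $c,b\geq0$, which exactly decomposes that $pq$-cycle of $F$ into nonnegative multiples of $p$- and $q$-cycles. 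Summing over $(i_0,j_0)$ gives $P,Q\in\ZZ_{\geq0}[X]$. That is the computation your proposal gestures at but does not carry out.
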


This is no longer true if $N$ has at least three distinct prime factors, say $p,q,r$, as the polynomial
\[F(X)=(\Php-1)(\Phq-1)+(\Phi_r(X^{N/r})-1)\]
satisfies $F(\ze_N)=0$, but has no such representation.

We observe that 
\[\Php=1+X^{N/p}+X^{2N/p}+\dotsb+X^{(p-1)N/p}\] 
is the mask polynomial of the subgroup of $\ZZ_N$ of order $p$; in general, for $d\mid N$, the cosets of the subgroup of order $d$, $\frac{N}{d}\ZZ_N$, will be called
$d$-cycles. So, Theorem \ref{vansumspos} asserts that a vanishing sum of roots of unity of order $N=p^mq^n$ can be expressed as a \emph{disjoint} union (i.e. as multisets, counting
multiplicities) of $p$- and $q$-cycles. This is very important when we consider the difference set $A-A$ of a spectral set $A\ssq\ZZ_N$ with $A(\ze_N)=0$, as it yields both $\frac{N}{p}\ZZ_N$, $\frac{N}{q}\ZZ_N\ssq A-A$,
under certain mild conditions, which in turn
gives roots for the mask polynomial of the spectrum $B$, due to Theorem \ref{mainref}\ref{i}.

The condition $A(\ze_N)=0$ is satisfied when a spectrum $B$ is \emph{primitive}; we will subsequently focus on spectral pairs $(A,B)$ of primitive sets, otherwise, we may consider $A$ and $B$ (after an appropriate
translation) as subsets of a proper subgroup.

\begin{defn}
  Let $G$ be a finite Abelian group and $T$ a subset. We call $T$ primitive, if it is not contained in a coset of a proper subgroup of $G$.
 \end{defn}
 
 \begin{lemma}\label{primitive}
  Let $G=\ZZ_N$ with $N=p^m q^n$. A subset $T\ssq G$ is primitive if and only if $(T-T)\cap\ZZ_N^{\star}\neq\vn$.
 \end{lemma}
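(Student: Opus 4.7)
The plan is to prove each direction of the equivalence separately, using that $N=p^mq^n$ has exactly two prime divisors in a crucial way for one direction.

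For the easier direction, I would argue the contrapositive: assume $T$ is not primitive, so $T\ssq x+H$ for some proper subgroup $H\subsetneq\ZZ_N$. Every proper subgroup of the cyclic group $\ZZ_N$ has the form $d\ZZ_N$ for some divisor $d\mid N$ with $d>1$. Hence $T-T\ssq d\ZZ_N$, and every element of $T-T$ is divisible by $d$, so none can be coprime to $N$. Thus $(T-T)\cap\ZZ_N^{\star}=\vn$. Note this direction does not actually need the two-prime hypothesis.

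For the converse, I would again take the contrapositive: suppose $(T-T)\cap\ZZ_N^{\star}=\vn$, and deduce that $T$ sits inside a coset of $p\ZZ_N$ or $q\ZZ_N$. Fix a base point $t_0\in T$ and classify the remaining points by
\[T_p=\set{t\in T:p\mid t-t_0},\qquad T_q=\set{t\in T:q\mid t-t_0}.\]
Because $N=p^mq^n$, every non-unit of $\ZZ_N$ is divisible by $p$ or by $q$; since by assumption $t-t_0\in (T-T)$ is never a unit, we get $T=T_p\cup T_q$.

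The key observation is that $T_p\sm T_q$ and $T_q\sm T_p$ cannot both be non-empty. Indeed, if $a\in T_p\sm T_q$ and $b\in T_q\sm T_p$, then $p\mid a-t_0$ but $p\nmid b-t_0$, so $p\nmid a-b$; symmetrically $q\nmid a-b$. Since $p$ and $q$ exhaust the prime divisors of $N$, this forces $a-b\in\ZZ_N^{\star}$, contradicting the hypothesis. Therefore one of $T_p\sm T_q$, $T_q\sm T_p$ is empty, so either $T\ssq t_0+q\ZZ_N$ or $T\ssq t_0+p\ZZ_N$, placing $T$ inside a coset of a proper subgroup.

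There is no real obstacle; the only delicate point is the second direction's essential reliance on $N$ having only two prime factors, so that the non-unit differences are forced into one of just two arithmetic progressions. With three or more primes the same pigeonhole argument would break down, which is consistent with the fact (noted earlier in the excerpt) that the structural tools specific to two-prime $N$ do not extend.
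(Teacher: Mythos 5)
Your proof is correct and is essentially the paper's argument: the key step (two elements whose differences from a base point are divisible by $p$ but not $q$ and by $q$ but not $p$, respectively, forcing their mutual difference into $\ZZ_N^{\star}$) is exactly the paper's three-element case analysis, just phrased contrapositively. You also spell out the easy direction (containment in a coset of a proper subgroup kills unit differences), which the paper treats as immediate, so nothing is missing.
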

 \begin{proof}
  Since $T$ is primitive, for a given $t\in T$ it would hold $t-T\nssq p\ZZ_N$ and $t-T\nssq q\ZZ_N$, yielding $t',t''\in T$ such that $t-t'\notin p\ZZ_N$, $t-t''\notin q\ZZ_N$. If either $t-t'\notin q\ZZ_N$
  or $t-t''\notin p\ZZ_N$, then we get $t-t'\in\ZZ_N^{\star}$ or $t-t''\in\ZZ_N^{\star}$, respectively. Otherwise, $t-t'\in q\ZZ_N$ and $t-t''\in p\ZZ_N$, therefore $t'-t''\notin p\ZZ_N\cup q\ZZ_N$, giving
  $t'-t''\in\ZZ_N^{\star}$, thus $(T-T)\cap\ZZ_N^{\star}\neq \vn$. The converse is trivial.
 \end{proof}

\begin{prop}\label{preroothunt}
 Let $A\ssq\ZZ_N$. Suppose that there is $j$ such that $A\cap(j+\frac{N}{pq}\ZZ_N)=A_{j\bmod\frac{N}{pq}}$ is not supported on a $p$- or $q$-cycle. Then $(A-A)\cap\frac{N}{pq}\ZZ_N^{\star}\neq\vn$.
\end{prop}

\begin{proof}
 Let $j$ be as in the hypothesis, and without loss of generality assume that $A\ssq j+\frac{N}{pq}\ZZ_N$, or equivalently, $A=A_{j\bmod\frac{N}{pq}}$. 
 Translating $A$ does not affect the conclusion, so we may further assume that $j=0$. This confines
 $A$ in the subgroup of order $pq$; dividing every element by $\frac{N}{pq}$ we can assume that $N=pq$. The problem has thus been reduced to the following: assume that $A\ssq\ZZ_{pq}$ is not a subset of a $p$- or a
 $q$-cycle; then $(A-A)\cap\ZZ_{pq}^{\star}\neq\vn$.  Indeed, this is the case, since $A$ is primitive, so the conlcusion of the reduced problem follows from Lemma \ref{primitive} with $m=n=1$.
\end{proof}

\begin{lemma}\label{roothunt}
 Let $A\ssq\ZZ_N$ with $N=p^mq^n$. Suppose that there is $d\mid \frac{N}{q}$ such that $(qd)\cdot A$ is a union of $p$-cycles, but $A(\ze_N^d)\neq0$. Then,
 \[(A-A)\cap\frac{N}{pqd}\ZZ_N^{\star}\neq\vn.\]
 If in addition, $A$ is spectral, then $B(\ze_{pqd})=0$, for any spectrum $B$ of $A$.
\end{lemma}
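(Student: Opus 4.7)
The plan is to reduce to a direct application of Proposition \ref{preroothunt} via the quotient $\ZZ_N \to \ZZ_{N/d}$. I would first introduce the multiset $B$ on $\ZZ_{N/d}$ defined by $B(y) = \abs{\{a \in A : a \equiv y \pmod{N/d}\}}$, the image of $A$ under reduction modulo $N/d$ counted with multiplicity. The two hypotheses translate cleanly: since $\ze_N^{ad}$ depends only on $a \bmod N/d$, we have $B(\ze_{N/d}) = A(\ze_N^d) \neq 0$; and reducing modulo $N/(qd)$ shows that $(qd)\cdot A$ is a union of $p$-cycles in $\ZZ_N$ if and only if $q\cdot B$ is a union of $p$-cycles in $\ZZ_{N/d}$. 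Setting $M = N/d$, the problem reduces to the following claim: if a multiset $B$ on $\ZZ_M$ satisfies that $q\cdot B$ is a union of $p$-cycles and $B(\ze_M) \neq 0$, then $(\supp(B) - \supp(B)) \cap \tfrac{M}{pq}\ZZ_M^{\star} \neq \vn$.

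To prove this reduced claim, I apply Proposition \ref{preroothunt} to $\supp(B)$: it suffices to exhibit a coset representative $j$ for which $\supp(B) \cap (j + \tfrac{M}{pq}\ZZ_M)$ is not contained in a single $p$- or $q$-cycle. Arguing by contradiction, suppose this fails for every $j$, and set $B_j = B \cap (j + \tfrac{M}{pq}\ZZ_M)$. The coset $j + \tfrac{M}{pq}\ZZ_M$ has the structure of $\ZZ_{pq}$, partitioning into $p$ \emph{$q$-columns} (the $q$-cycles inside) and $q$ \emph{$p$-rows} (the $p$-cycles), with each row meeting each column in exactly one point. The condition ``$q\cdot B$ is a union of $p$-cycles'' translates into the column-sum constraint: within each coset the total $B$-multiplicity in every $q$-column is the same integer $k_j$. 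If $\supp(B_j)$ lies in a single $q$-column, the remaining columns force $k_j = 0$ and hence $B_j = 0$; if $\supp(B_j)$ lies in a single $p$-row $P$, then $P$ meets each column once, so constancy of column sums forces $B_j$ to have multiplicity $k_j$ at every point of $P$. In both cases $B_j(\ze_M) = 0$---the latter because $\sum_{x \in P}\ze_M^x = \ze_M^z \sum_{i=0}^{p-1}\ze_p^i = 0$---and summing over $j$ yields $B(\ze_M) = 0$, a contradiction.

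From the reduced claim I obtain $b_1, b_2 \in \supp(B)$ with $b_1 - b_2 = \tfrac{M}{pq} u$, $\gcd(u,pq) = 1$. Choosing lifts $a_1, a_2 \in A$ with $a_i \equiv b_i \pmod{N/d}$, one has $a_1 - a_2 = \tfrac{N}{pqd}(u + kpq)$ for some $k \in \ZZ$. Since $u + kpq \equiv u \pmod{pq}$ is coprime to $pq$ and the prime factors of $d$ are among $\{p,q\}$, the integer $u + kpq$ is automatically coprime to $pqd$; hence $a_1 - a_2$ has exact order $pqd$ in $\ZZ_N$, establishing the first conclusion. For the ``in addition'' clause, spectrality of $(A,B)$ yields spectrality of $(B,A)$ (the Gram matrix for $\scal{e_a, e_{a'}}_B$ is the transpose of that for $\scal{e_b, e_{b'}}_A$), so Corollary \ref{specord} applied to $(B,A)$ gives $B(\ze_{\ord(a_1 - a_2)}) = 0$, i.e.\ $B(\ze_{pqd}) = 0$.

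The main technical obstacle is the contradiction argument in the reduced claim: one must combine the column-sum constraint with the single-cycle assumption to force each $B_j$ to be either zero or uniformly $k_j$ on a full $p$-cycle, so that $B_j(\ze_M) = 0$ in every case.
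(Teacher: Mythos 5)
Your proof is correct and takes essentially the same route as the paper: both analyze the restriction of $d\cdot A$ (equivalently, your multiset $B$ on $\ZZ_{N/d}$) to each $pq$-cycle, use the constancy of $q$-column sums coming from the hypothesis that $(qd)\cdot A$ is a union of $p$-cycles, deduce that $A(\ze_N^d)=0$ would follow if every local piece were supported on a single $p$- or $q$-cycle, and then invoke Proposition \ref{preroothunt} together with Corollary \ref{specord}. The passage to $\ZZ_{N/d}$ and the two-case ``$B_j(\ze_M)=0$'' dichotomy are a slightly cleaner bookkeeping of the same argument (the paper instead shows a nonempty $A_{d,j}$ must meet every $q$-cycle, hence can only be a single $p$-cycle), but the mathematical content is identical.
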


\begin{proof}
 By hypothesis and Proposition \ref{maskpoly}, we have
 \[A(X^{qd})\equiv P(X)\Php\bmod(X^N-1)\]
 Consider the intersections of $dA$ with the cosets of the subgroup of index $pq$, namely
 \[dA\cap(j+\frac{N}{pq}\ZZ_N)=\bra{dA}_{j\bmod\frac{N}{pq}},\]
 where $0\leq j\leq\frac{N}{pq}-1$. If $\bra{dA}_{j\bmod\frac{N}{pq}}\neq\vn$, then it intersects every coset of the subgroup $\frac{N}{q}\ZZ_N$ contained in $j+\frac{N}{pq}\ZZ_N$. 
 Indeed, if $da\in \bra{dA}_{j\bmod\frac{N}{pq}}$, then
 \[qda+i\frac{N}{p}\in qdA,\]
 for $0\leq i\leq p-1$ by hypothesis, so, for each $i$ there is $a'\in A$ such that $qda'\equiv qda+i\frac{N}{p}\bmod N$, therefore,
 \[da'\equiv da+i\frac{N}{pq}+k\frac{N}{q}\bmod N,\]
 for some $k$. Each element of the form $da+i\frac{N}{pq}+k\frac{N}{q}$ belongs to a different $q$-cycle of $j+\frac{N}{pq}\ZZ_N$, and since there are exactly $p$ of them, $dA$ intersects them all. If every pair
 of elements $da,da'\in \bra{dA}_{j\bmod\frac{N}{pq}}$ lying on two different $q$-cycles, belongs to the same $p$-cycle, then it is evident that $\bra{dA}_{j\bmod\frac{N}{pq}}$ is a single $p$-cycle. By hypothesis,
 the multiset $(qd)\cdot A$ contains the entire $p$-cycle $qj+\frac{N}{p}\ZZ_N$ with some multiplicity (i.e. every element in this $p$-cycle appears with the same positive multiplicity in $(qd)\cdot A$),
 since $\bra{dA}_{j\bmod\frac{N}{pq}}\neq\vn$. We have proven that the elements $d\cdot A$ restricted to $j+\frac{N}{pq}\ZZ_N$ (i.e. the inverse image of $qj+\frac{N}{p}\ZZ_N$ under the multiplication by $q$ map) 
 are supported on
 a single $p$-cycle; therefore, this $p$-cycle must appear with the same multiplicity in $d\cdot A$ as $qj+\frac{N}{p}\ZZ_N$ appears in $(qd)\cdot A$. Thus, if every $\bra{dA}_{j\bmod\frac{N}{pq}}$
 is supported on a $p$-cycle, we
 conlcude that $d\cdot A$ is also a union of $p$-cycles as a multiset, giving $A(\ze_N^d)=0$, a contradiction.
 
 This proves the existence of
 $da,da'\in \bra{dA}_{j\bmod\frac{N}{pq}}$, lying on two different $q$-cycles, and two different $p$-cycles as well, for some $j$. Then $\bra{dA}_{j\bmod\frac{N}{pq}}$
 is not supported on a $p$- or $q$-cycle, so by Proposition \ref{preroothunt}
 \[(dA-dA)\cap\frac{N}{pq}\ZZ_N^{\star}\neq\vn,\]
 or equivalently, there are $a,a'\in A$ such that $d(a-a')\in\frac{N}{pq}\ZZ_N^{\star}$, which shows that $\ord(a-a')=pqd$; indeed, as $pqd(a-a')\equiv 0\bmod N$, while $pd(a-a')\in\frac{N}{q}\ZZ_N^{\star}$
 and $qd(a-a')\in\frac{N}{p}\ZZ_N^{\star}$. This implies that $(A-A)\cap\frac{N}{pqd}\ZZ_N^{\star}\neq\vn$, hence $B(\ze_{pqd})=0$ for any spectrum $B$ of $A$ by Corollary \ref{specord}, completing the proof.
\end{proof}

 \bigskip\bigskip
 
 \section{Some special cases}
 \bigskip\bigskip
 
 For the rest of this article, we restrict to $N=p^mq^n$, where $p,q$ are distinct primes. There are some special cases that yield somewhat easily the Spectral$\Rar$Tile direction, and serve as building blocks
 in more general cases.

Consider the natural map $\ZZ[X]\twoheadrightarrow\FF_q[X]$, which is just reduction of the coefficients $\bmod q$. The image of a polynomial $P(X)\in\ZZ[X]$ will be denoted simply by $\ol{P}(X)$.
Reducing the coefficients modulo a prime dividing $N$ seems very advantageous when a small power of said prime appears in the prime decomposition of $N$.
 
 \begin{thm}\label{nopq}
  Let $p$, $q$ be distinct primes and $N=p^mq^n$, where $m$, $n$ are positive integers. Suppose that $A\ssq\ZZ_N$ is spectral, such that $pq\nmid\abs{A}$. Then $A$ tiles $\ZZ_N$ by translations.
 \end{thm}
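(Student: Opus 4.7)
My plan is to verify properties \ref{t1} and \ref{t2} for $A$, after which Theorem \ref{LCM}\ref{I} delivers a tiling complement. Without loss of generality assume $q\nmid\abs{A}$, so $\abs{A}=p^a$ for some $0\le a\le m$. Since distinct cyclotomic polynomials are pairwise coprime in $\ZZ[X]$, the product $\prod_{s\in S_A^N}\Phi_s(X)$ divides $A(X)$, and specialising at $X=1$ yields $\prod_{s\in S_A^N}\Phi_s(1)\mid p^a$. As $\Phi_{q^k}(1)=q$, no pure $q$-power can belong to $S_A^N$, so $S_A^N\subseteq\set{p,p^2,\dotsc,p^m}$. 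In particular one cannot pick elements of $S_A^N$ that are powers of different primes, so \ref{t2} holds vacuously; the same divisibility also gives $\abs{S_A^N}\le a$.

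The crux is the matching lower bound $\abs{S_A^N}\ge a$, which yields \ref{t1}. I plan to reduce to the cyclic group $\ZZ_{p^m}$, where Fuglede's conjecture is already known. Let $\pi_1\colon\ZZ_N\to\ZZ_{p^m}$ be the natural projection and set $A^{(p)}:=\pi_1(A)$. A collision $\pi_1(a)=\pi_1(a')$ with $a\ne a'\in A$ would give $a-a'$ of pure $q$-power order; dualising the first paragraph to the spectrum $B$ (spectral with spectrum $A$ by the standard duality of spectral pairs in finite abelian groups, with $\abs{B}=p^a$ so that $S_B^N$ contains no $q$-power) rules this out, because $a-a'\in Z(B)$ would then force $\Phi_{q^k}\mid B$. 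Hence $\pi_1$ is injective on $A$, so $A^{(p)}$ is a genuine subset of $\ZZ_{p^m}$ of size $p^a$, and $A(\ze_{p^k})=A^{(p)}(\ze_{p^k})$ gives $S_{A^{(p)}}^{p^m}=S_A^N$. It therefore suffices to establish \ref{t1} for $A^{(p)}$ in $\ZZ_{p^m}$.

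The main obstacle lies in this last step. My intended strategy is to show that $A^{(p)}$ is spectral in $\ZZ_{p^m}$, after which Fuglede's conjecture in $\ZZ_{p^m}$ forces $A^{(p)}$ to tile $\ZZ_{p^m}$ and Theorem \ref{LCM}\ref{III} then delivers \ref{t1}. The natural candidate spectrum is $\pi_1(B)$, which has size $p^a$ by the dual injectivity. Verifying orthogonality reduces to showing, for each $b\ne b'\in B$, that the $p$-part $p^i$ of $\ord(b-b')$ satisfies $\Phi_{p^i}\mid A$: when $\ord(b-b')$ is itself $p^i$ this is Corollary \ref{specord} applied to $(A,B)$, whereas when $\ord(b-b')=p^iq^j$ with $j\ge1$ the non-trivial implication $\Phi_{p^iq^j}\mid A\Rightarrow\Phi_{p^i}\mid A$ is expected to follow from the Lam--Leung structure theorem (Theorem \ref{vansumspos}) applied to $A\bmod(X^{p^iq^j}-1)$, combined with $A(\ze_{q^k})\ne0$ for all $k$ and the coefficient reduction modulo $q$ flagged in the preamble---which is effective precisely because $\abs{A}=p^a$ is coprime to $q$. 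Once \ref{t1} is in hand, Theorem \ref{LCM}\ref{I} completes the proof.
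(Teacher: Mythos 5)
Your first two paragraphs are correct and give a clean setup: without loss of generality $q\nmid\abs{A}$ so $\abs{A}=p^a$, the $\Phi_{q^k}$ cannot divide $A(X)$, \ref{t2} is vacuous, $\abs{S_A^N}\leq a$, and the projection $\pi_1\colon\ZZ_N\to\ZZ_{p^m}$ is injective on $A$ (and on $B$, by duality of spectral pairs plus $\Phi_{q^k}(1)=q\nmid\abs{A}=\abs{B}$). The global strategy -- show $A^{(p)}=\pi_1(A)$ is spectral in $\ZZ_{p^m}$, invoke the known $p$-power case of Fuglede, then read off \ref{t1} via Theorem~\ref{LCM}\ref{III} -- is also logically coherent and genuinely different from the paper's route.

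The gap is exactly where you flag it, but the fix you propose does not work. To make $\pi_1(B)$ an orthogonal spectrum for $A^{(p)}$ you need: for every $b\ne b'\in B$ with $\ord(b-b')=p^iq^j$, $j\geq1$, the conclusion $A(\ze_{p^i})=0$. You propose to deduce this from the Lam--Leung decomposition together with reduction of coefficients $\bmod\ q$, ``effective because $\abs{A}$ is prime to $q$.'' But those tools only yield the \emph{mod-$q$} statement $\ol{\Phi}_{p^i}(X)\mid\ol{A}(X)$ in $\FF_q[X]$ (equivalently, $q$ divides $A(\ze_{p^i})$ in $\ZZ[\ze_{p^i}]$), which is strictly weaker than $A(\ze_{p^i})=0$ and does not use spectrality at all. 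Indeed, the implication ``$\Phi_{p^iq^j}\mid A$ and $q\nmid\abs{A}$ imply $\Phi_{p^i}\mid A$'' is false for general subsets: for $N=24$, $p=2$, $q=3$, the set $A=\set{0,2,3,4,6,8,10,12}\ssq\ZZ_{24}$ has $\abs{A}=8$, satisfies $A(\ze_6)=0$ (so $\Phi_{pq}\mid A$), yet $A(\ze_2)=6\ne0$. Any proof of the implication therefore \emph{must} inject the spectral hypothesis somewhere, and you have not done so.

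The paper does inject it, but in a way your sketch omits. It introduces the auxiliary polynomial $F(X)=A(X)\prod_{r\notin R}\Phi_{p^r}(X)$, uses the \emph{spectrum} $B$ to bound $\abs{A}\leq p^{\abs{R}}$ (so $F(1)\leq p^m$) via a $p$-adic digit argument, then combines the mod-$q$ divisibilities $\ol{\Phi}_{p^r}\mid\ol{F}$ for all $r$ with nonnegativity of the coefficients of $F$ and the bound $F(1)\leq p^m$ to force $F(X)\equiv 1+X+\dotsb+X^{p^m-1}\bmod(X^{p^m}-1)$ \emph{over} $\ZZ$, from which the tiling complement (and, a posteriori, the implication you want) falls out. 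It is precisely this counting-and-positivity step -- the bridge from mod-$q$ divisibility to integral divisibility -- that your proposal is missing, and it is the heart of the matter.
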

 
 \begin{proof}
  Without loss of generality, we may assume that $q\nmid\abs{A}$. 
  It holds $A(\ze)\neq0$ when $\ze^{q^n}=1$;
  otherwise, $A(X)$ would be divided by some cyclotomic polynomial $\Phi_{q^k}(X)$ for some $1\leq k\leq n$ and as a consequence, $q=\Phi_{q^k}(1)$ would divide $A(1)=\abs{A}$, a contradiction. Next,
  consider the set
  \[R=\set{r\in[1,m]:A(\ze_{p^r}\ze_{q^k})=0,\text{ for some }k, 0\leq k\leq n}.\]
  Define the following polynomial
  \begin{equation}\label{polyF}
  F(X)=A(X)\prod_{\substack{1\leq r\leq m\\ r\notin R}}\Phi_{p^r}(X),
  \end{equation}
  which has nonnegative integer coefficients. We will show that $F(1)\leq p^m$, or equivalently, $A(1)\leq p^{\abs{R}}$. To this end, consider the spectrum $B$ of $A$, which satisfies $\abs{A}=\abs{B}$
  and $B-B\ssq\set{0}\cup Z(A)$ by Theorem \ref{mainref}\ref{i}.
  Since $A(\ze)\neq0$ when $\ze^{q^n}=1$, we obtain $(B-B)\cap p^m\ZZ_N=\set{0}$, therefore, any two distinct elements of $B$ are not congruent modulo $p^m$ (giving a first estimate of $\abs{A}\leq p^m$).
  So, we may write for every $b\in B$
  \[b\equiv b_0+b_1p+\dotsb+b_{m-1}p^{m-1}\bmod p^m,\	0\leq b_j\leq p-1,\	0\leq j\leq m-1,\]
  and observe that no two elements of $B$ correspond to the same $m$-tuple $(b_0,\dotsc,b_{m-1})$. Moreover, no two elements of $B$ can agree on all $p$-adic digits $b_r$, for $m-r\in R$; if such elements existed,
  say $b\neq b'$, then $p^r\pdiv b-b'$ for some $m-r\notin R$ and since $(A,B)$ is a spectral pair, we should have $A(\ze_{p^{m-r}}\ze_{q^k})=0$ for some $k$
	by Corollary \ref{specord}, contradicting the fact $m-r\notin R$. This
  clearly shows that $\abs{A}\leq p^{\abs{R}}$, and thus $F(1)\leq p^m$.
  
  Next, we will show that $\ol{\Phi}_{p^r}(X)\mid \ol{F}(X)$ in $\FF_q[X]$ for $1\leq r\leq m$. If $r\notin R$ this follows immediately from \eqref{polyF}. If $r\in R$, then by definition $\Phi_{p^rq^k}(X)\mid A(X)$
  for some $k$,
  hence $\Phi_{p^rq^k}(X)\mid F(X)$, where $p^rq^k\mid N$. Using standard properties of cyclotomic polynomials we get
  \[\Phi_{p^rq^k}(X)=\Phi_{pq}(X^{p^{r-1}q^{k-1}})=\frac{\Phi_p(X^{p^{r-1}q^k})}{\Phi_p(X^{p^{r-1}q^{k-1}})}.\]
  Reducing coefficients $\bmod q$, we obtain
  \[\frac{\ol{\Phi}_p(X^{p^{r-1}})^{q^k}}{\ol{\Phi}_p(X^{p^{r-1}})^{q^{k-1}}}=\ol{\Phi}_p(X^{p^{r-1}})^{q^{k-1}(q-1)}=\ol{\Phi}_{p^r}(X)^{q^{k-1}(q-1)}.\]
  Therefore,
  \[\prod_{r=1}^m\ol{\Phi}_{p^r}(X)=X^{p^m-1}+\dotsb+1\mid \ol{F}(X)\]
  in $\FF_q[X]$, say
  \[\ol{F}(X)=(X^{p^m-1}+\dotsb+1)G(X).\]
  Now, we reduce $\bmod(X^{p^m}-1)$. Since
  \[X^{\ell}(X^{p^m-1}+\dotsb+1)\equiv X^{p^m-1}+\dotsb+1\bmod(X^{p^m}-1)\]
  for every $\ell$, we get
  \begin{equation}\label{Freduced}
  \ol{F}(X)\equiv G(1)(X^{p^m-1}+\dotsb+1)\bmod(X^{p^m}-1),
  \end{equation}
  where $G(1)\neq0$ (in $\FF_q$) by assumption. Consider the remainder of the Euclidean division $F(X):X^{p^m}-1$ in $\ZZ[X]$, say $R(X)$. It is not hard to show that $R$ has also nonnegative coefficients; indeed,
  if $F(X)=\sum_{k=0}^{\deg F}f_kX^k\in\ZZ_{\geq0}[X]$, then
  \[R(X)=\sum_{k=0}^{p^m-1}\bra{\sum_{\ell\equiv k\bmod(p^m-1)}f_\ell}X^k\in\ZZ_{\geq0}[X].\]
  All coefficients of $R(X)$
  must be congruent to $c\equiv G(1)\bmod q$ due to \eqref{Freduced}. Without loss of generality, we may assume $0<c<q$, hence positivity of the coefficients of $R$ implies
  \[R(X)=c(X^{p^m-1}+\dotsb+1)+qQ(X),\]
  where $Q(X)\in\ZZ_{\geq0}[X]$, therefore,
  \[F(1)=R(1)=cp^m+qQ(1)\geq p^m,\]
  implying $F(1)=p^m$ and $Q(X)\equiv 0$. From this we obtain 
  \[F(X)\equiv X^{p^m-1}+\dotsb+1\bmod(X^{p^m}-1)\]
  in $\ZZ[X]$. Clearly,
  \[F(X)\sum_{s=0}^{q^n-1}X^{p^ms}\equiv X^{N-1}+\dotsb+1\bmod(X^N-1),\]
  so, for the polynomial $C(X)\in\ZZ[X]$ with nonnegative coefficients defined by
  \[C(X)=\bra{\sum_{s=0}^{q^n-1}X^{p^ms}}\prod_{\substack{1\leq r\leq m\\ r\notin R}}\Phi_{p^r}(X)\]
  we have
  \[A(X)C(X)\equiv X^{N-1}+\dotsb+1\bmod(X^N-1),\]
  implying that $C(X)$ is the mask polynomial of a set $C$ which is the tiling complement of $A$. This completes the proof.
 \end{proof}
 
 Theorem \ref{nopq} proves the Spectral$\Rar$Tiling direction for sets $A$ whose cardinality is prime to either $p$ or $q$. On the other extreme, the same holds when the cardinality is divisible by the maximal
 power of either $p$ or $q$, dividing $N$.

\begin{prop}\label{maxpower}
Let $N=p^mq^n$ and $A\ssq\ZZ_N$ spectral such that $p^m\mid \abs{A}$ or $q^n\mid \abs{A}$. Then $A$ tiles $\ZZ_N$.
\end{prop}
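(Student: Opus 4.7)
The plan is to mirror Theorem~\ref{nopq}, now using the hypothesis $p^m\mid|A|$ (the case $q^n\mid|A|$ follows by symmetry). Since a spectral subset of $\ZZ_N$ has cardinality dividing $N$, I may write $|A|=p^mq^b$ with $0\le b\le n$. The extremes are immediate: if $b=0$ then $q\nmid|A|$ and Theorem~\ref{nopq} applies, while if $b=n$ then $A=\ZZ_N$ tiles trivially. So it remains to handle $1\le b\le n-1$.

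The target is to verify the Coven--Meyerowitz conditions \ref{t1} and \ref{t2} for $A$ and invoke Theorem~\ref{LCM}\ref{I}. Since $\Phi_{p^r}(1)=p$, $\Phi_{q^k}(1)=q$, $\Phi_{p^rq^k}(1)=1$, and $|A|=p^mq^b$, property \ref{t1} amounts to
\[
\Phi_{p^r}(X)\mid A(X)\ \text{for every}\ 1\le r\le m,\qquad\text{and exactly $b$ of the}\ \Phi_{q^k}(X)\ \text{divide}\ A(X).
\]
Using the identity $\prod_{r=1}^{m}\Phi_{p^r}(X)=1+X+\dotsb+X^{p^m-1}$, the first assertion is equivalent to $A$ being \emph{equidistributed} modulo $p^m$, meaning every residue class mod $p^m$ contains exactly $q^b$ elements of $A$.

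My main route to equidistribution is through the spectrum $B$. Consider the projection $\pi\colon\ZZ_N\to\ZZ_N/p^m\ZZ_N\cong\ZZ_{q^n}$. Two elements of $B$ lying in the same $\pi$-fiber differ by an element of $p^m\ZZ_N$ whose order is some $p^r$, $1\le r\le m$, and Corollary~\ref{specord} then forces $\Phi_{p^r}\mid A(X)$. If some fiber $B\cap\pi^{-1}(z)$ attains the maximal size $p^m$, it is a full coset of $q^n\ZZ_N$ and its internal differences exhaust every order $p^r$ with $1\le r\le m$, so all $\Phi_{p^r}$ divide $A(X)$ simultaneously. Otherwise, I would argue via a delicate pigeon-hole count: the constraint $|B|=p^mq^b$ together with $|B\cap\pi^{-1}(z)|\le p^m-1$ should force differences of every $p$-prime-power order to accumulate across fibers. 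A dual argument with $\ZZ_N/q^n\ZZ_N$ pins down the $q$-prime-power count in $S_A^N$. For \ref{t2}, I would then apply the mod-$q$ reduction technique of Theorem~\ref{nopq}: the identity $\ol{\Phi}_{p^rq^k}(X)=\ol{\Phi}_{p^r}(X)^{q^{k-1}(q-1)}$ in $\FF_q[X]$ lifts $\Phi_{p^r}$- and $\Phi_{q^k}$-divisibility to $\Phi_{p^rq^k}$-divisibility, as needed.

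The main obstacle I anticipate is the equidistribution step in the ``no full fiber'' case. Merely knowing $v_p(|A|)=m$ does not by itself force every $\Phi_{p^r}$ to divide $A(X)$, since the $p$-adic valuation of $|A|$ could a priori be absorbed by non-cyclotomic factors of $A(X)$. Ruling this out requires combining the cardinality $|B|=p^mq^b$ with the full spectral condition $B-B\ssq\set{0}\cup Z(A)$, rather than just pigeon-holing on fiber sizes, and this is where the heart of the argument must lie.
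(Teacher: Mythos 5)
Your plan — verify \ref{t1} and \ref{t2} and apply Theorem \ref{LCM}\ref{I} — is the same high-level route as the paper, and you have correctly identified the heart of the matter: proving that $A$ is equidistributed $\bmod p^m$. But as you yourself note, your proposal does not close that step, and the route you sketch for it does not work. A few concrete issues:

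First, you invoke ``a spectral subset of $\ZZ_N$ has cardinality dividing $N$'' as a given. This is not a freely available fact; indeed the paper derives $\abs{A}=p^mq^k$ only in the course of the proof, and you should not case-split on $b$ as if you already knew it.

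Second, your fiber argument looks at the fibers of $B$ under $\pi\colon\ZZ_N\to\ZZ_N/q^n\ZZ_N$ (you wrote $p^m\ZZ_N$, but for differences inside a fiber to have $p$-power order you must quotient by the subgroup $q^n\ZZ_N$ of order $p^m$). Each such fiber has size at most $p^{\abs{S_A^N(p)}}$, and a full fiber of size $p^m$ would indeed give you all $\Phi_{p^r}\mid A(X)$ — but there is no reason a full fiber should exist: for $\abs{B}=p^mq^b$ with $b<n$, the average fiber size is strictly less than $p^m$. Crucially, the bound $\abs{B}\le q^n\cdot p^{\abs{S_A^N(p)}}$ has no matching divisibility to combine with, because the hypothesis is $p^m\mid\abs{B}$, not $q^n\mid\abs{B}$. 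So this direction cannot be pigeonholed into a contradiction.

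The paper instead looks at the fibers of $A$ $\bmod p^m$, i.e.\ the sets $A_{j\bmod p^m}$. Differences inside such a fiber lie in $p^m\ZZ_N$, and by the $Z(B)$ constraint they land only in the divisor classes $p^mq^{n-n_i}\ZZ_N^{\star}$ with $B(\ze_{q^{n_i}})=0$; hence the elements of a fiber are separated by their $q$-adic digits in the positions indexed by $S_B^N(q)$, giving $\abs{A_{j\bmod p^m}}\le q^{k}$ where $k=\abs{S_B^N(q)}$. The decisive point — and the one your sketch is missing — is that the hypothesis $p^m\mid\abs{A}$ can be \emph{combined} with $q^k\mid\abs{B}=\abs{A}$ (from the $q$-power cyclotomic factors of $B$) to get $p^mq^k\mid\abs{A}$. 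Summing over the $p^m$ fibers then forces $\abs{A_{j\bmod p^m}}=q^k$ for every $j$, which is exactly equidistribution $\bmod p^m$, hence all $\Phi_{p^r}\mid A(X)$. Your dual decomposition has no analogous divisibility to supply, which is why it stalls.

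Finally, for \ref{t2} the paper does not use the $\bmod q$ reduction of Theorem \ref{nopq}; that technique relied on $q\nmid\abs{A}$, which fails here. Instead, the paper uses the common spectrum $\La$ with mask polynomial $\prod_{i}\Phi_{q^{n_i}}(X^{p^m})$ of the fibers $A_{j\bmod p^m}$, deduces $n_i+s_i=n+1$, and then computes $A_{j\bmod p^m}(\ze_{p^{\ell}q^{n-n_i+1}})=0$ directly from the factored form $A_{j\bmod p^m}(X)\equiv X^jF(X^{p^m})$. You would need to either supply that argument or show that the $\bmod q$ identity you propose survives without the coprimality assumption of Theorem \ref{nopq}.
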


\begin{proof}
Without loss of generality, $p^m\mid \abs{A}$, and let $B$ be a spectrum of $A$. Suppose that
 \[S_B^N=\set{p^{m_1},\dotsc,p^{m_{\ell}},q^{n_1},\dotsc,q^{n_k}},\]
 where $1\leq m_1<\dotsb<m_{\ell}\leq m$ and $1\leq n_1<\dotsb<n_k\leq n$. Since
 \[\Phi_{q^{n_1}}(X)\dotsm\Phi_{q^{n_k}}(X)\mid B(X),\]
 we obtain
 \[p^m q^k\mid \abs{B}=\abs{A}\]
 by putting $X=1$, and using the hypothesis.
 For every $j$ consider $A_{j\bmod p^m}$. Since $A_{j\bmod p^m}-A_{j\bmod p^m}\ssq p^m\ZZ_N$, we must have
 \begin{equation}\label{ajdiff}
 A_{j\bmod p^m}-A_{j\bmod p^m}\ssq\set{0}\cup p^mq^{n-n_1}\ZZ_N^{\star}\cup\dotsb\cup p^mq^{n-n_k}\ZZ_N^{\star}
 \end{equation}
 by Theorem \ref{mainref}\ref{i}.
 As usual, we consider the (finite) $q$-adic expansion of each $a\in\ZZ_N$ as
\[a\equiv a_0+a_1q+a_2q^2+\dotsb+a_{n-1}q^{n-1}\bmod q^n, \;\;\;\; 0\leq a_i\leq q-1, 0\leq i\leq n-1.\]
 Suppose that $\abs{A_{j\bmod p^m}}>q^k$ for some $j$. 
This would imply the existence of $a,a'\in A_{j\bmod p^m}$ that would have the same $q$-adic digits at positions $n-n_1,\dotsc,n-n_k$, hence $a-a'$ would not
 belong to the above union, contradiction. Thus, $\abs{A_{j\bmod p^m}}\leq q^k$ for all $j$, and in light of $p^mq^k\mid\abs{A}$ we must have
 \[\abs{A_{j\bmod p^m}}=q^k,\	\forall j.\]
 This further implies $A(\ze_{p^{\ell}})=0$, $1\leq\ell\leq m$. Using the same argument as above, where we reverse
the roles of $A$ and $B$, we also obtain $B(\ze_{p^{\ell}})=0$, for $1\leq \ell\leq m$, whence
\[\abs{B_{j\bmod p^m}}=q^k,\	\forall j.\]
Next, consider
\[S_A^N=\set{p,p^2,\dotsc,p^m,q^{s_1},\dotsc,q^{s_t}},\]
where $1\leq s_1<\dotsb<s_t\leq n$, so that
\[\Phi_{q^{s_1}}(X)\dotsm\Phi_{q^{s_t}}(X)\mid A(X),\]
and putting $X=1$, we get $q^t\mid\abs{A}$, which yields $t\leq k$. We actually have $t=k$, since
 \begin{equation}\label{bjdiff}
B_{j\bmod p^m}-B_{j\bmod p^m}=\set{0}\cup p^mq^{n-s_1}\ZZ_N^{\star}\cup\dotsb\cup p^mq^{n-s_t}\ZZ_N^{\star},
\end{equation}
for every $j$, and $\abs{B_{j\bmod p^m}}=q^k$; if $t<k$, then there would exist $b,b'\in B_{j\bmod p^m}$, $b\neq b'$ whose $q$-adic expansions
$\bmod q^n$ agree on all $q$-adic digits corresponding to $q^{n-s_i}$, $1\leq i\leq t$. But then
\[b-b'\notin \set{0}\cup p^mq^{n-s_1}\ZZ_N^{\star}\cup\dotsb\cup p^mq^{n-s_t}\ZZ_N^{\star},\]
contradiction. Thus, $t=k$ and in particular, $A$ satisfies \ref{t1}. 
 
Next, consider the polynomial
\[\La(X)=\prod_{i=1}^k\Phi_{q^{n_i}}(X^{p^m}).\]
As the coefficients of $\La(X)$ are $0$ or $1$, it is the mask polynomial of a set $\La\ssq\ZZ_N$ with cardinality $\abs{\La}=\La(1)=q^k$.
By \eqref{ajdiff}, $\La$ is then the common spectrum of the sets $A_{j\bmod p^m}$. It holds
\[\bigcup_{i=1}^k p^mq^{n_i-1}\ZZ_N^{\star}\ssq \La-\La \ssq\set{0}\cup Z(A_{j\bmod p^m}),\]
for every $j$ by Theorem \ref{mainref}\ref{i}, therefore,
\[A_{j\bmod p^m}(\ze_{q^{n-n_i+1}})=0,\	1\leq i\leq k,\	 \forall j,\]
thus $A(\ze_{q^{n-n_i+1}})=0$, $1\leq i\leq k$, proving eventually that $n_i+s_i=n+1$, and
\[S_A^N=\set{p,p^2,\dotsc,p^m,q^{n-n_1+1},\dotsc,q^{n-n_k+1}}.\]

Next, we fix $j$ and consider the polynomial $F(X)$ satisfying
 \[A_{j\bmod p^m}(X)\equiv X^j F(X^{p^m})\bmod (X^N-1).\]
 Since $\Phi_{q^{n-n_i+1}}(X)\mid F(X^{p^m})$ for $1\leq i\leq k$, and $p^m$ is prime to $q^{n-n_i+1}$, we also have $\Phi_{q^{n-n_i+1}}(X)\mid F(X)$. Therefore,
 for $1\leq \ell\leq m$ we have
 \[A_{j\bmod p^m}(\ze_{p^{\ell}q^{n-n_i+1}})=\ze_{p^{\ell}q^{n-n_i+1}}^jF(\ze_{p^{\ell}q^{n-n_i+1}}^{p^m})=\ze_{p^{\ell}q^{n-n_i+1}}^jF(\ze_{q^{n-n_i+1}}^{p^{m-\ell}})=0,\]
 proving
 \[A(\ze_{p^{\ell}q^{n-n_i+1}})=0,\	1\leq i\leq k,\	1\leq \ell\leq m,\]
 thus $A$ satisfies \ref{t2} as well, hence tiles $\ZZ_N$ by virtue of Theorem \ref{LCM}\ref{I}, as desired.
\end{proof}
 
 The techniques developed so far cover the result proven in \cite{MK}.
 
 \begin{cor}\label{mkshort}
  Let $N=p^nq$. Then $A\ssq\ZZ_N$ is spectral if and only if it is a tile.
 \end{cor}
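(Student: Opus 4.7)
The plan is to observe that this corollary follows almost immediately by combining three previously established results: Theorem \ref{tspmqn}, Theorem \ref{nopq}, and Proposition \ref{maxpower}. Since $N=p^nq$ has exactly two distinct prime factors, the Tiling$\Rar$Spectral direction is an immediate consequence of Theorem \ref{tspmqn}, so all the work lies in the reverse implication.

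For the Spectral$\Rar$Tiling direction, let $A\ssq\ZZ_N$ be spectral. I would split into two cases based on whether $q$ divides $|A|$. If $q\nmid |A|$, then $pq\nmid |A|$, and Theorem \ref{nopq} applies directly to yield that $A$ tiles $\ZZ_N$. If instead $q\mid |A|$, then because the exponent of $q$ in the factorization of $N=p^nq$ is exactly $1$, the maximal power of $q$ dividing $N$ is $q$ itself; that is, $q^1\mid |A|$. This is precisely the hypothesis of Proposition \ref{maxpower} (in the form $q^n\mid |A|$ with $n=1$ in the notation of that proposition), so $A$ tiles $\ZZ_N$.

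There is no real obstacle here: the two cases $q\nmid |A|$ and $q\mid |A|$ exhaust all possibilities, and each is handled by an off-the-shelf result from the preceding section. The only point to double check is the alignment of notation between Proposition \ref{maxpower} (stated for $N=p^mq^n$ with a general $q$-exponent $n$) and the special case $N=p^nq$ at hand, where the role of the $q$-exponent is played by $1$. This is the reason the argument collapses so cleanly for $N=p^nq$: there is no ``middle'' regime in which $q\mid |A|$ but $q^n\nmid |A|$, so Theorem \ref{nopq} and Proposition \ref{maxpower} between them cover every possible $|A|$.
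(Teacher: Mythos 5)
Your proposal is correct and follows exactly the same route as the paper: the Tiling$\Rar$Spectral direction via Theorem \ref{tspmqn}, and the Spectral$\Rar$Tiling direction by splitting on whether $q$ divides $\abs{A}$, applying Theorem \ref{nopq} in one case and Proposition \ref{maxpower} in the other. Nothing to add.
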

 
 \begin{proof}
 If $A$ tiles, then it is spectral by Theorem \ref{tspmqn}. Suppose that $A$ is spectral. If $q\nmid \abs{A}$, then $A$ tiles by Theorem \ref{nopq}; if $q\mid\abs{A}$,
  then $A$ tiles by Proposition \ref{maxpower}.
 \end{proof}

\bigskip\bigskip
 \section{Sketch of the proof of Theorem \ref{mainthm}}
 \bigskip\bigskip

Before proceeding to the most technical part of the proof of Theorem \ref{mainthm}, we will describe the main ideas. First of all, the proof is inductive in nature; so far, there are
some positive results involving products of two prime powers (for example, Corollary \ref{mkshort}), so we may work in a group $\ZZ_N$ with $N=p^mq^n$, such that Fuglede's conjecture
holds in all proper subgroups of $\ZZ_N$, or in other words, it holds in all groups of the form $\ZZ_M$ with $M\mid N$, $M<N$.

If $(A,B)$ is a spectral pair in such a group, we may easily make some reductions for both $A$ and $B$. If either $A$ or $B$ is a subset of a coset of a proper subgroup, we reduce to
a spectral pair in a subgroup of $\ZZ_N$, hence we obtain the tiling property in the smaller group, which in turn implies that $A$ tiles $\ZZ_N$ (Proposition 
\ref{primitivereduction}). Therefore, we reduce to the case where $A$ and every spectrum $B$ are primitive. Primitivity implies that both difference sets $A-A$ and $B-B$
intersect $\znp$ by Lemma \ref{primitive}. Then, Theorem \ref{mainref}\ref{i} yields that $\ze_N$ is a root of both $A(X)$ and $B(X)$ (Corollary \ref{Nroot}), 
hence each of them is a disjoint union
of $p$- and $q$-cycles by Theorem \ref{vansumspos}. We obtain thus a first strong information about the structure of both $A$ and $B$, which in turn shows that $A-A$ and $B-B$
intersect other divisor classes, which provide more roots for $A(X)$ and $B(X)$, hence more information about $A$ and $B$, and so on.

Further reductions are possible; if $A$ is a union only of $p$-cycles (or only of $q$-cycles), then we can again reduce the problem to a smaller subgroup where Fuglede's conjecture
holds, which yields that $A$ tiles $\ZZ_N$. We can thus reduce to the case where $A$ is not a union only of $p$-cycles, or only of $q$-cycles (Proposition \ref{Apq}). The same
fact applies for every spectrum $B$ of $A$, however, it is more difficult to prove (Corollary \ref{Bpq}).

Next, we consider subsets of a spectral set $A\ssq\ZZ_N$, of the form $A_{j\bmod d}=A\cap(j+d\ZZ_N)$, where $d\mid N$. If, say $pd\mid N$, we have the obvious partition of 
$A_{j\bmod d}$
\[A_{j\bmod d}=\bigsqcup_{k=0}^{p-1}A_{j+kd\bmod pd},\]
and a same partition holds when $qd\mid N$. If all sets in the above disjoint union have equal cardinalities, then we say that $A_{j\bmod d}$ is equidistributed $\bmod pd$, and
on the other extreme, if only one of the above sets is nonempty, we say that $A_{j\bmod d}$ is absorbed $\bmod pd$. One or the other property holds when certain combination of roots
of $A(X)$ and $B(X)$ appears (Lemma \ref{abseqd}). Taking into account how $A_{j\bmod d}$ distributes $\bmod pd$ or $\bmod qd$ and then taking the difference between sets in the
union above, we obtain information not only about the roots of $B(X)$ (using Theorem \ref{mainref}\ref{i}), but also of $A(X)$. This is the content of Section \ref{absorptionequid}.

If there is some $d=p^k$ (respectively, $d=q^\ell$), such that either $A_{j\bmod d}$ or $B_{j\bmod d}$ is absorbed $\bmod pd$ (respectively, $\bmod qd$) for every $j$, then we can extend both
sets so that we obtain a new spectral pair $(A',B')$ (Lemma \ref{ascend}, Corollary \ref{multascend}, Lemma \ref{absfreeredux}). Both $A'$ and $B'$ are absorption-free, and $A'=A\oplus S$, for some $S\ssq\ZZ_N$ for which $0\in S$. In particular, if $A'$ tiles, then so does $A$. With this argument, we may further reduce the problem to those spectral sets $A$ that are absorption-free as well. This certainly holds true when $A\in\F(N)$ has maximal size; we denote the set of all such spectral sets by $\Fmax(N)$.

The failure of Fuglede's conjecture in $\ZZ_N$, especially when it holds for all proper groups $\ZZ_N$, imposes some symmetry on the roots of the mask polynomials of the spectral pair $(A,B)$, when $A\in\F(N)$ (Lemma \ref{rootpatterns} and Proposition \ref{maxpqroots}). This result, along with subsequent ones, is obtained by checking how sets of the form $B_{j\bmod d}$ or $A_{j\bmod d}$ distribute $\bmod pd$ and $\bmod qd$ (i.e. especially when we have absorption or equidistribution), when $d$ ranges through the powers $p, p^2,\dots$ or $q, q^2\dotsc$. Arguments of this type are used in the proofs of Lemma \ref{t1fail}, and the more technical Lemmata \ref{Ulb} and \ref{deficitestimate} (in Lemma \ref{Ulb} we also examine the distribution of sets of the form $A_{j\bmod p^{m-r-1}q^{n-1}}$).

Lemma \ref{t1fail} is the first indication that if $A\in\Fmax(N)$, then \ref{t1} fails for every spectrum $B$, so that $B\in\Fmax(N)$ (Corollary \ref{wt1tile}). We recall that every root of the form $\ze_{p^x}$ contributes a factor of $p$ in the cardinality of $B$; Lemma \ref{t1fail} states also that we get a contribution of a factor of $p$ even when $B(\ze_{p^x})\neq0=B(\ze_{p^xq})$. Lemma \ref{Ulb} estimates the number of such $x$, and then Lemma \ref{deficitestimate} estimates the number of $x$ for which the symmetry of root patterns fails, called herein the root deficit. These estimates eventually lead to the desired result, when one of the exponents $m$ or $n$ is small enough, or when $p^{m-2}<q^4$.

 \bigskip\bigskip
 \section{Primitive subsets and an inductive approach}
 \bigskip\bigskip
 
 The existence of a spectral subset $A\ssq\ZZ_N$ that does not tile, imposes a very rigid structure on $A$ that is impossible to appear in a subset of $\ZZ_N$, under certain conditions. This is the main argument
 for proving Fuglede's conjecture in certain cyclic groups of order $N=p^mq^n$. We will usually consider groups such that Fuglede's conjecture holds in all proper subgroups.
 
 \begin{defn}
  For two distinct primes $p$, $q$, define the following set:
  \[\Spq=\set{N=p^mq^n:\ZZ_N\text{ has spectral subsets that do not tile}}.\]
  We distinguish also the minimal elements in terms of division by defining the following subset of $\Spq$:
  \[\Spqmin=\set{N\in\Spq:\forall M\mid N\text{ satisfying }M\neq N, \text{ it holds } M\notin \Spq},\]
  or equivalently, $N\in\Spqmin$ if Fuglede's conjecture is not true in $\ZZ_N$ but holds in every subgroup of $\ZZ_N$. Finally, for each $N\in\Spq$ we define
  \[\F(N)=\set{A\ssq\ZZ_N:A\text{ is spectral but does not tile}}.\]
 \end{defn}

 \begin{rem}
 Fuglede's conjecture holds in $\ZZ_{p^mq^n}$ for every $m,n\geq1$, if and only if $\Spq=\vn$. Throughout this paper, we will usually show that Fuglede's conjecture holds for a specific $\ZZ_N$ (or a family
 of cyclic groups), by showing that $N$ cannot belong to $\Spq$, or equivalently, that $\F(N)=\vn$. 
 We will do so by proving some properties of the sets belonging to $\F(N)$, 
 which will be impossible to hold under some conditions, for example if one of the exponents $m,n$ is relatively small.
 \end{rem}
 

 
 Now, we start proving properties for the sets defined above. We recall that a subset of $\ZZ_N$ is called primitive, if it is not contained in a coset of a proper 
subgroup of $\ZZ_N$.
 
 \begin{prop}\label{primitivereduction}
  Let $A\ssq\ZZ_N$ be a spectral set, $N\in\Spqmin$, such that $A$ is not primitive, or possesses a non-primitive spectrum, $B$. Then $A$ satisfies \ref{t1} and \ref{t2}, or equivalently, $A\notin\F(N)$.
 \end{prop}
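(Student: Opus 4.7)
Proof plan. The approach exploits the minimality of $N\in\Spqmin$: Fuglede's conjecture holds in every proper subgroup of $\ZZ_N$, in particular in $\ZZ_{N/p}$ (either by the minimality hypothesis or, if $m=1$, by the prime power case in Corollary \ref{mkshort}). I treat the two bullets of the hypothesis separately, in parallel.

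Step 1 (normalization). Translating $A$ or $B$ preserves both spectrality and properties \ref{t1}, \ref{t2} (it only multiplies $A(X)$ or $B(X)$ by a monomial $X^c$, which doesn't affect divisibility by cyclotomic polynomials or the cardinality). Since every proper subgroup of $\ZZ_N$ is contained in one of the maximal proper subgroups $p\ZZ_N$ and $q\ZZ_N$, after translating and possibly swapping $p$ and $q$ I may assume $A\ssq p\ZZ_N$ in Case 1 (``$A$ not primitive''), and $B\ssq p\ZZ_N$ in Case 2 (``$B$ not primitive'').

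Step 2 (descent). In Case 1 write $A=pA'$ with $A'\ssq\ZZ_{N/p}$. For any two distinct $b_1,b_2\in B$ with $b_1\equiv b_2\pmod{N/p}$, the inner product
\[\scal{e_{b_1},e_{b_2}}_A=\sum_{a'\in A'}\ze_N^{(b_1-b_2)pa'}=\sum_{a'\in A'}1=\abs{A'}\neq 0\]
contradicts spectrality; thus $B$ has pairwise distinct residues mod $N/p$, and a direct computation confirms that $B':=B\bmod N/p$ is a spectrum of $A'$ in $\ZZ_{N/p}$. In Case 2 I use that $(B,A)$ is also a spectral pair (the character matrix is square, so orthogonality of rows and columns are equivalent); the symmetric computation, with $A$ and $B$ exchanged and using $B\ssq p\ZZ_N$, shows that $A$ itself has distinct residues mod $N/p$, and $A':=A\bmod N/p$ is spectral in $\ZZ_{N/p}$ with spectrum $B':=B/p$.

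Step 3 (tiling and lifting). In either case $A'$ is spectral in $\ZZ_{N/p}$, hence by minimality it tiles $\ZZ_{N/p}$; fix a tiling complement $T'$. In Case 1 I lift by setting $T:=pT'+\set{0,1,\dotsc,p-1}\ssq\ZZ_N$; then $A\oplus T=pA'\oplus pT'\oplus\set{0,\dotsc,p-1}=p\ZZ_N\oplus\set{0,\dotsc,p-1}=\ZZ_N$, verified by cardinality and uniqueness. In Case 2 I take $T$ to be the full preimage of $T'$ under the projection $\ZZ_N\twoheadrightarrow\ZZ_{N/p}$; any equation $a_1+t_1=a_2+t_2$ in $\ZZ_N$ reduces to a tiling equation in $A'\oplus T'=\ZZ_{N/p}$, forcing $\bar a_1=\bar a_2$ and $\bar t_1=\bar t_2$, and the injectivity of $A\twoheadrightarrow A'$ then yields $a_1=a_2$ and $t_1=t_2$. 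Hence $A\oplus T=\ZZ_N$, and Theorem \ref{LCM}\ref{III} supplies \ref{t1} and \ref{t2} for $A$, equivalently $A\notin\F(N)$.

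The main obstacle is the descent in Step 2: translating the non-primitivity of one member of the spectral pair into injectivity of the mod-$N/p$ reduction on the other member. This symmetry-breaking computation is what makes the lifting in Step 3 unambiguous, and it rests on the elementary but essential fact that $(b_1-b_2)\cdot pa'\equiv 0\pmod N$ whenever $b_1-b_2\in (N/p)\ZZ_N$ and $pa'\in p\ZZ_N$.
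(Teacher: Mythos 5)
Your proof is correct, and it takes a genuinely different route in its final step. Where the paper checks \ref{t1} and \ref{t2} for $A$ directly in $\ZZ_N$ by transferring them from $\ZZ_{N/p}$ — invoking a terse claim that $A(\ze_{p^m})\neq 0$ in the ``$B$ not primitive'' case, and a Galois/polynomial computation (from $A(X)\equiv A'(X^p)$) in the ``$A$ not primitive'' case — you instead lift the tiling complement explicitly from $\ZZ_{N/p}$ to $\ZZ_N$ and then invoke Theorem \ref{LCM}\ref{III}. This gives a uniform treatment of both cases and sidesteps the need to track exactly which extra cyclotomic factors divide $A(X)$ once we pass from $X^{N/p}-1$ to $X^N-1$; in particular, your Step 3 makes explicit what the paper leaves somewhat implicit (the paper's ``otherwise \ref{t1} would fail'' step). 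Your descent in Step~2 correctly uses that $(B,A)$ is also a spectral pair (the square character matrix is unitary from either side), which the paper folds into its unitary-matrix remark. One small citation blemish: for $m=1$ the subgroup is $\ZZ_{q^n}$, which is a prime-power cyclic group and is handled by the $\ZZ_{p^k}$ Fuglede result (\cite{Qpfuglede}, \cite{MK}), not by Corollary \ref{mkshort} (which treats $\ZZ_{p^nq}$); alternatively, the definition of $\Spqmin$ already covers this automatically since $q^n\notin\Spq$.
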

 
 \begin{proof}
  Suppose that $A$ has a spectrum $B$ that is not primitive. Then, a translate of $B$ must be contained in a maximal subgroup of $\ZZ_N$, i.e. either $p\ZZ_N$ or
	$q\ZZ_N$. This translate is also a spectrum of $A$, so without loss of generality we may assume $B\ssq p\ZZ_N$. 
	This shows the existence of $B'\ssq\ZZ_N$ with $\abs{B}=\abs{B'}$
  and $B=pB'$. The matrix 
  \[\tfrac{1}{\sqrt{\abs{A}}}\bra{\ze_N^{ab}}_{a\in A, b\in B}=\tfrac{1}{\sqrt{\abs{A}}}\bra{\ze_{N/p}^{ab'}}_{a\in A, b'\in B'}\]
  is unitary by definition, hence no two elements of $A$ leave the same residue $\bmod N/p$ (as well as of $B'$). Hence, $(A,B')$ can be considered as a spectral in $\ZZ_{N/p}$, so $A(X)\bmod(X^{N/p}-1)$ satisfies
  \ref{t1} and \ref{t2} by definition of $\Spqmin$. This in particular shows that $A(\ze_{p^m})\neq0$, otherwise \ref{t1} would fail $\bmod N/p$. Therefore, \ref{t1} and \ref{t2} are satisfied by 
  $A(X)\bmod(X^N-1)$ as well.
  
  Next, suppose that $A$ is not primitive, so that $A\ssq p\ZZ_N$ without loss of generality. As before, $(A',B)$ would be a spectral pair in $\ZZ_{N/p}$, where $A=pA'$, so $A'(X)$ satisfies \ref{t1} and
  \ref{t2} by definition of $\Spqmin$. We have $A(X)\equiv A'(X^p)\bmod(X^N-1)$, hence $A(\ze_p)\neq0$. The following show that $A$ also satisfies \ref{t1} and \ref{t2}:
  \[A'(\ze_{p^kq^{\ell}})=0\Longrightarrow A(\ze_{p^{k+1}q^{\ell}})=0,\]
  and
  \[A'(\ze_{q^k})=0\Longrightarrow A(\ze_{pq^k})=A(\ze_{q^k})=0.\]
  For the latter, consider $t$ such that $pt\equiv1\bmod q^k$. Then,
  \[A(\ze_{q^k}^t)=A'(\ze_{q^k}^{pt})=A'(\ze_{q^k})=0,\]
  and by a Galois action we get $A(\ze_{q^k})=0$ as well. This completes the proof.
 \end{proof}

 \begin{cor}\label{Nroot}
  If $A\in\F(N)$, $N\in\Spq'$, then
  \[A(\ze_N)=B(\ze_N)=0,\]
  for any spectrum $B\ssq\ZZ_N$ of $A$.
 \end{cor}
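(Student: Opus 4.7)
The proof plan is a short deduction combining Proposition \ref{primitivereduction}, Lemma \ref{primitive}, and Corollary \ref{specord}, together with the symmetry of spectral pairs.

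First, since $N \in \Spqmin$ and $A \in \F(N)$, Proposition \ref{primitivereduction} forces both $A$ and every spectrum $B$ of $A$ to be primitive (otherwise $A$ would satisfy \ref{t1} and \ref{t2}, hence tile $\ZZ_N$ by Theorem \ref{LCM}\ref{I}, contradicting $A\in \F(N)$). Applying Lemma \ref{primitive} in both directions, I get
\[(A-A)\cap \ZZ_N^{\star}\neq \vn\quad\text{and}\quad (B-B)\cap \ZZ_N^{\star}\neq \vn.\]

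For the first equality $A(\ze_N)=0$, I pick $b,b'\in B$ with $b\neq b'$ and $b-b'\in \ZZ_N^{\star}$. Then $\ord(b-b')=N$, and Corollary \ref{specord} yields $A(\ze_N)=A(\ze_{\ord(b-b')})=0$, as desired.

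For the second equality $B(\ze_N)=0$, I use the symmetry of the spectral pair relation: the defining orthogonality condition is equivalent to the unitarity of the square matrix $\frac{1}{\sqrt{|A|}}(\ze_N^{ab})_{a\in A, b\in B}$, and unitarity is a two-sided property, so the rows are also orthonormal. Concretely, this means that for all $a\neq a'\in A$, $B(\ze_N^{a-a'})=\sum_{b\in B}\ze_N^{(a-a')b}=0$, i.e. $(B,A)$ is itself a spectral pair of $\ZZ_N$. Picking $a,a'\in A$ with $a-a'\in \ZZ_N^{\star}$ and applying Corollary \ref{specord} to the pair $(B,A)$ yields $B(\ze_N)=0$.

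There is no real obstacle here — the work has already been done in Proposition \ref{primitivereduction}; the only ingredient worth isolating is the observation that spectral pairs are symmetric, which is immediate from the square-matrix unitarity and is implicitly used throughout the paper (e.g. in the proof of Proposition \ref{primitivereduction} itself).
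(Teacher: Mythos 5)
Your proof follows the paper's argument exactly: invoke Proposition~\ref{primitivereduction} to get primitivity of $A$ and $B$, convert to $(A-A)\cap\ZZ_N^{\star}\neq\vn\neq(B-B)\cap\ZZ_N^{\star}$ via Lemma~\ref{primitive}, then conclude via Theorem~\ref{mainref}\ref{i}/Corollary~\ref{specord}. Your explicit spelling-out of the symmetry of spectral pairs via unitarity of the square matrix is a correct clarification of a step the paper leaves implicit, but it is not a different approach.
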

 
 \begin{proof}
  By Proposition \ref{primitivereduction}, $A$ and every spectrum $B$ are primitive. By Lemma \ref{primitive}, we get $(A-A)\cap\ZZ_N^{\star}\neq\vn\neq(B-B)\cap\ZZ_N^{\star}$, so by Theorem
  \ref{mainref}\ref{i} we obtain $A(\ze_N)=B(\ze_N)=0$.
 \end{proof}

 Consider $A\in\F(N)$, $N\in\Spq'$, and $B$ a spectrum of $A$.
 By Corollary \ref{Nroot} and Theorem \ref{vansumspos}, we may write
 \begin{equation}\label{ax}
  A(X)\equiv P(X)\Php+Q(X)\Phq\bmod(X^N-1),
 \end{equation}
 for $P(X),Q(X)\in\ZZ_{\geq0}[X]$, and
 \begin{equation}\label{bx}
  B(X)\equiv R(X)\Php+S(X)\Phq\bmod(X^N-1),
 \end{equation} 
 for $R(X),S(X)\in\ZZ_{\geq0}[X]/(X^N-1)$. Next, we will show that Fuglede's conjecture is satisfied if $P$ or $Q$ is identically zero.
 
 \begin{prop}\label{Apq}
  Suppose that $A\in\F(N)$, $N\in\Spq'$. Then $A$ cannot be expressed as a union of $p$- or $q$-cycles exclusively (i.e. $Q\equiv 0$ or $P\equiv 0$, respectively).
 \end{prop}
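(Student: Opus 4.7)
Assume without loss of generality that $Q\equiv 0$; the case $P\equiv 0$ follows by symmetry under the interchange of $p$ and $q$. Note that $\Phi_p(X^{N/p})$ is the mask polynomial of the order-$p$ subgroup $H:=\frac{N}{p}\ZZ_N$; since $A$ is a genuine set (not a multiset), the congruence $A(X)\equiv P(X)\Phi_p(X^{N/p})\bmod(X^N-1)$ forces $A$ to be a union of cosets of $H$. Equivalently, letting $\pi:\ZZ_N\to\ZZ_N/H\cong\ZZ_{N/p}$ be the quotient map and $A':=\pi(A)$, we have $A=\pi^{-1}(A')$, $\abs{A}=p\abs{A'}$, and choosing the transversal $C=A\cap\set{0,1,\dotsc,N/p-1}$ of the $H$-cosets in $A$, the mask polynomial factorizes as $A(X)=A'(X)\Phi_p(X^{N/p})$ in $\ZZ[X]$. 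My plan is to prove that $A'$ is spectral in $\ZZ_{N/p}$; the minimality of $N\in\Spq'$ will then force $A'$ to be a tile of $\ZZ_{N/p}$, and its tiling complement will lift to a tiling complement of $A$ in $\ZZ_N$, contradicting $A\in\F(N)$.

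To produce a spectrum for $A'$, I decompose the given spectrum $B$ of $A$ along the cosets of the index-$p$ subgroup $p\ZZ_N$: for $0\leq i\leq p-1$ set $B_i:=B\cap(i+p\ZZ_N)$ and write $B_i=i+p\widetilde B_i$ with $\widetilde B_i\ssq\ZZ_{N/p}$. For any $b,b'\in B$, the decomposition $A=C+H$ gives
\[\scal{e_b,e_{b'}}_A=\bra{\sum_{h\in H}\ze_N^{(b-b')h}}\bra{\sum_{c\in C}\ze_N^{(b-b')c}},\]
whose first factor vanishes unless $p\mid b-b'$, that is, unless $b$ and $b'$ lie in the same $B_i$. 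For $b,b'\in B_i$ with $b\neq b'$, writing $b-b'=p(b_1-b_2)$ with distinct $b_1,b_2\in\ZZ_{N/p}$, the second factor equals $A'(\ze_{N/p}^{b_1-b_2})$; orthogonality of $B$ forces this to vanish, so each $\widetilde B_i$ is an orthogonal family for $A'$ in $\CC^{A'}$, giving $\abs{\widetilde B_i}\leq\abs{A'}$. Since $\sum_{i=0}^{p-1}\abs{\widetilde B_i}=\abs{B}=p\abs{A'}$, every inequality is an equality, and each $\widetilde B_i$ is a spectrum of $A'$ in $\ZZ_{N/p}$.

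Since $N/p$ is a proper divisor of $N\in\Spq'$, we have $N/p\notin\Spq$, so Fuglede's conjecture holds in $\ZZ_{N/p}$ and the spectral set $A'$ admits a tiling complement $T'\ssq\ZZ_{N/p}$. Lifting $T'$ to the representatives $\set{0,1,\dotsc,N/p-1}\ssq\ZZ_N$, the identity $A'(X)T'(X)\equiv 1+X+\dotsb+X^{N/p-1}\bmod(X^{N/p}-1)$ together with the telescoping $\Phi_p(X^{N/p})\bra{1+X+\dotsb+X^{N/p-1}}=1+X+\dotsb+X^{N-1}$ yields
\[A(X)T'(X)=A'(X)\Phi_p(X^{N/p})T'(X)\equiv 1+X+\dotsb+X^{N-1}\bmod(X^N-1),\]
so by Lemma \ref{tilingpair}, $A$ tiles $\ZZ_N$ by $T'$, contradicting $A\in\F(N)$. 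The crux of the argument is the sharp counting in the middle step: each $\widetilde B_i$ is orthogonal of size at most $\abs{A'}$, and the sizes sum precisely to $p\abs{A'}$, leaving no slack and forcing all of them to be full spectra simultaneously.
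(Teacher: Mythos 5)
Your proof is correct and follows the same overall strategy as the paper's: reduce the problem to the projected set $A'$ (the paper calls it $P$) in $\ZZ_{N/p}$, exhibit the slices $\widetilde B_i$ of the spectrum as spectra of $A'$, and invoke minimality of $N\in\Spqmin$ to get tiling downstairs. There are two worthwhile technical differences. First, you obtain $\abs{\widetilde B_i}=\abs{A'}$ for all $i$ by a pigeonhole count: each slice gives an orthogonal family in $\CC^{A'}$ (hence has size at most $\abs{A'}$) and the $p$ slice sizes sum to $\abs{B}=p\abs{A'}$; the paper instead observes that $A$ being a union of $p$-cycles forces $(A-A)$ to meet $\frac{N}{p}\ZZ_N^{\star}$, whence $B(\ze_p)=0$ and the slices are equidistributed. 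Both are correct, but the paper's derivation identifies a root of $B$ that is reused elsewhere, while yours is more self-contained. Second, your endgame is leaner: you lift the tiling complement $T'$ of $A'$ directly to $\ZZ_N$ via the identity $\Phi_p(X^{N/p})(1+\dotsb+X^{N/p-1})=1+\dotsb+X^{N-1}$ and Lemma \ref{tilingpair}, whereas the paper routes through verifying \ref{t1} and \ref{t2} for $A$ (using Theorem \ref{LCM}\ref{III} to get them for $P$ from $P$ tiling $\ZZ_{N/p}$, and then lifting the root pattern). Your version avoids the \ref{t1}/\ref{t2} machinery entirely and is a genuine, if modest, simplification of the concluding step; everything in the factorization argument and the cardinality bookkeeping checks out.
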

 
 \begin{proof}
 Let $B$ be a spectrum of $A$, and suppose that $A$ is a union of $p$- or $q$-cycles exclusively.
  Without loss of generality we may assume $Q\equiv 0$, so that 
  \begin{equation}\label{Apcycles}
  A(X)\equiv P(X)\Php\bmod(X^N-1).
  \end{equation} 
  For every $a\in A$ we have $a+\frac{N}{p}\ZZ_N\ssq A$, hence $(A-A)\cap\frac{N}{p}\ZZ_N^{\star}\neq\vn$, implying $B(\ze_p)=0$ and 
  \[\abs{B_{j\bmod p}}=\frac{1}{p}\abs{B},\	\forall j.\]
  Since $P(X)\in\ZZ_{\geq0}[X]$, the coefficients must be either $0$ or $1$, therefore $P(X)$ is the mask
  polynomial of a set $P\ssq\ZZ_N$. Furthermore, \eqref{Apcycles} shows that any element of $P$ is unique $\bmod N/p$, so we may consider this as a subset in $\ZZ_{N/p}$.
  
  Next, we will show that each $B_{j\bmod p}$ is a spectrum for $P$. Since 
  \[B_{j\bmod p}-B_{j\bmod p}\ssq p\ZZ_N\cap (Z(A)\cup\set{0}),\]
  by Theorem \ref{mainref}\ref{i}, it suffices to show that $A(\ze_N^d)=0$ implies $P(\ze_N^d)=0$
  whenever $p\mid d$ and $d\mid N$, i.e. $A(X)$ and $P(X)$ have the same roots, when we restrict to the $\frac{N}{p}$-roots of unity. But this is true, since \eqref{Apcycles} implies
  \[A(X)\equiv pP(X)\bmod (X^{N/p}-1),\]
  therefore, $A(\ze_{N/p}^d)=pP(\ze_{N/p}^d)$, for all $d$. We observe that each spectrum $-j+B_{j\bmod p}$ is a subset of the subgroup of $\ZZ_N$ of order $N/p$, which shows that $P\ssq\ZZ_{N/p}$
  is spectral, since $\abs{P}=\abs{B_{j\bmod p}}$, for all $j$.
  
  By definition of $\Spq'$, $P$ satisfies \ref{t1} and \ref{t2}. We remind that $A(X)$ and $P(X)$ have precisely the same roots, when we restrict to the $\frac{N}{p}$-roots
  of unity. In addition, $A(X)$ vanishes for all other $N$th roots of unity, as $\Php\mid A(X)$, clearly showing that $A(X)$ satisfies \ref{t1} and \ref{t2} as well.
 \end{proof}
 
 One might expect that a similar proof applies when $B$ is a union of $p$- or $q$-cycles exclusively. However, we didn't manage to find such a straightforward proof, but rather a complicated one. The crucial
 ideas needed for this are examined in the next Sections.
 
 

 \bigskip\bigskip
 \section{The absorption-equidistribution property}\label{absorptionequid}
 \bigskip\bigskip
 
 
 \begin{defn}
  Let $A\ssq\ZZ_N$, and let $pd\mid N$, where $p$ is a prime. We call the set $A_{i\bmod d}$ \emph{absorbed} $\bmod pd$, if there is $k$ with $0\leq k\leq p-1$, such that $A_{i\bmod d}=A_{i+kd\bmod pd}$.
  We call it \emph{equidistributed} $\bmod pd$, if $\abs{A_{i+kd\bmod pd}}=\frac{1}{p}\abs{A_{i\bmod d}}$ for every $k$ with $0\leq k\leq p-1$.
 \end{defn}
 
 A fundamental observation coming from the structure of the vanishing sums of roots of unity, i.e. Theorem \ref{vansumspos}, is the fact that $A(\ze_{p^{k+1}})=0$ implies that each $A_{i\bmod p^k}$
 is equidistributed $\bmod p^{k+1}$.
 
 \begin{lemma}\label{abseqd}
  Let $(A,B)$ be a spectral pair in $\ZZ_N$, $N=p^mq^n$, and $d$ a divisor of $\frac{N}{pq}$ such that 
  \[A(\ze_N^d)B(\ze_{pd})\neq0=B(\ze_{pqd}).\]
  Then each subset $B_{i\bmod d}$ is either absorbed or equidistributed $\bmod pd$. Absorption occurs for at least one $i$.
 \end{lemma}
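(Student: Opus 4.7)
The plan is to reduce to the quotient ring $\ZZ[X]/(X^{pqd}-1)$ and exploit Theorem \ref{vansumspos}. Since $pqd\mid N$ has exactly two prime factors, the hypothesis $B(\ze_{pqd})=0$ yields a decomposition
\[\bar B(X)\equiv P(X)\Phi_p(X^{qd})+Q(X)\Phi_q(X^{pd})\pmod{X^{pqd}-1},\]
where $\bar B$ is the image multiset of $B$ in $\ZZ_{pqd}$ and $P,Q\in\ZZ_{\geq 0}[X]$; moreover, the second part of Theorem \ref{vansumspos} together with $B(\ze_{pd})\neq 0$ forces $Q\not\equiv 0\pmod{X^{pqd}-1}$. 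A direct expansion then shows that for every $j\in\ZZ_{pqd}$,
\[\abs{B_{j\bmod pqd}}=\widetilde P_{j\bmod qd}+\widetilde Q_{j\bmod pd},\]
where $\widetilde P_s=\sum_{r\equiv s\bmod qd}P_r$ and analogously $\widetilde Q_s=\sum_{r\equiv s\bmod pd}Q_r$.

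Fix $i\in\set{0,\dotsc,d-1}$. The $pq$ residues mod $pqd$ lying in the fiber $i\bmod d$ are indexed by $k\in\ZZ_{pq}\cong\ZZ_p\times\ZZ_q$ via $k\leftrightarrow(k_p,k_q)$, and the multiplicity at $i+kd$ becomes $p_{k_q}+q_{k_p}$ with $p_{k_q}:=\widetilde P_{i+k_q d}$ and $q_{k_p}:=\widetilde Q_{i+k_p d}$. Summing over $k_q$ yields
\[\abs{B_{i+k_p d\bmod pd}}=\Sigma_P(i)+q\cdot q_{k_p},\quad\text{where }\Sigma_P(i)=\sum_{k_q}p_{k_q}.\]
The hypothesis $A(\ze_N^d)\neq 0$, via Theorem \ref{mainref}\ref{i}, precludes any nonzero element of $B-B$ from lying in the divisor class $d\ZZ_N^{\star}$, so for distinct residues $\bar b,\bar b'\in\ZZ_{pqd}$ in the support of $\bar B$ congruent mod $d$ one must have $pd\mid b-b'$ or $qd\mid b-b'$. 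Translated to $(k_p,k_q)$-coordinates, the support $S_i\subseteq\ZZ_p\times\ZZ_q$ of these multiplicities has the property that any two distinct elements agree in at least one coordinate, which by a short combinatorial argument forces $S_i$ to lie entirely in a single ``column'' $\set{a^*}\times\ZZ_q$ or a single ``row'' $\ZZ_p\times\set{b^*}$.

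In the column case (which subsumes $\abs{S_i}\leq 1$), non-negativity gives $p_{k_q}=0$ for every $k_q$ and $q_{k_p}=0$ for $k_p\neq a^*$, so $\abs{B_{i+k_p d\bmod pd}}$ vanishes except at $k_p=a^*$ --- absorption. In the row case with at least two distinct $k_p$'s in $S_i$, picking any $k_q\neq b^*$ forces $q_{k_p}=0$ for every $k_p$, hence $\abs{B_{i+k_p d\bmod pd}}=\Sigma_P(i)$ is constant in $k_p$ --- equidistribution. For the last assertion, if \emph{no} $B_{i\bmod d}$ were absorbed, then every $i$ would fall into the row case with spread, so $\widetilde Q$ would vanish on every coset of $d\ZZ_{pd}$ in $\ZZ_{pd}$, giving $\widetilde Q\equiv 0$ and hence $Q\equiv 0\pmod{X^{pqd}-1}$ --- contradicting $Q\not\equiv 0$. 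The main obstacle is correctly identifying the $\widetilde P+\widetilde Q$ description of the multiplicities and translating $A(\ze_N^d)\neq 0$ into the combinatorial constraint on $S_i$; once these are in place, the row/column dichotomy and the final contradiction via $Q\not\equiv 0$ are essentially immediate.
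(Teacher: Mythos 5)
Your proof is correct and takes essentially the same approach as the paper: both apply Theorem \ref{vansumspos} to the projection of $B$ to $\ZZ_{pqd}$ (you phrase this via the polynomial factors $P,Q$ and the induced multiplicity formula, whereas the paper speaks directly of the multiset $\frac{N}{pqd}\cdot B$ as a disjoint union of $p$- and $q$-cycles), and both use $A(\ze_N^d)\neq 0$ together with Theorem \ref{mainref}(i) to force each fiber's support in $\ZZ_p\times\ZZ_q$ onto a single row or column, i.e.\ a single $p$- or $q$-cycle. Your explicit $\widetilde P/\widetilde Q$ bookkeeping and the closing $Q\not\equiv 0$ argument are just more detailed renditions of the steps the paper leaves brief.
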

 
 \begin{proof}
  By Theorem \ref{vansumspos}, the multiset $\frac{N}{pqd}\cdot B$ is a disjoint union of $p$- and $q$-cycles. Each such cycle is a subset of a unique $pq$-cycle (i.e. a coset of the subgroup
  $\frac{N}{pq}\ZZ_N$), as $j+\frac{N}{p}\ZZ_N\ssq j+\frac{N}{pq}\ZZ_N$, for example. Therefore, the same conclusion holds for the elements of $\frac{N}{pqd}\cdot B$ (\emph{counting
  multiplicities}) in an arbitrary $pq$-cycle. If $\frac{N}{pqd}\cdot B\cap(j+\frac{N}{pq}\ZZ_N)$ is not supported on a single $p$- or $q$-cycle, then there are $b,b'\in B$ such that
  $\frac{N}{pqd}(b-b')\in\frac{N}{pq}\ZZ_N^{\star}$, by virtue of Proposition \ref{preroothunt}.
	This yields $b-b'\in d\ZZ_N^{\star}$, contradicting the fact that $A(\ze_N^d)\neq0$, using Theorem \ref{mainref}\ref{i}. Now, every element of
  $\frac{N}{pqd}\cdot B_{i\bmod d}$ belongs to the $pq$-cycle $\frac{iN}{pqd}+\frac{N}{pq}\ZZ_N$, therefore it must also be a disjoint union of $p$- and $q$-cycles; in our case, it must be 
  a single $p$- or $q$-cycle with a certain multiplicity. If $\frac{N}{pqd}\cdot B_{i\bmod d}$ is a $p$-cycle with a multiplicity, then this $p$-cycle must be
  \[\frac{iN}{pqd}+k\frac{N}{q}+\frac{N}{p}\ZZ_N=\frac{N}{pqd} B_{i\bmod d},\]
  for some $k$, which clearly shows that $\abs{B_{i+kpd+jqd\bmod pqd}}$ is constant in $j$, and 
  \[B_{i+k'pd+jqd\bmod pqd}=\vn,\]
  for $k'\not\equiv k\bmod q$. This yields the absorption
  \[B_{i+jqd\bmod pd}=B_{i+kpd+jqd\bmod pqd},\]
  for all $j$, which shows that $B_{i\bmod d}$ is equidistributed $\bmod pd$. If on the other hand, $\frac{N}{pqd}\cdot B_{i\bmod d}$ is a $q$-cycle with a multiplicity, then this $q$-cycle must be
  \[\frac{iN}{pqd}+j\frac{N}{p}+\frac{N}{q}\ZZ_N=\frac{N}{pqd} B_{i\bmod d},\]
  for some $j$, which shows that $\abs{B_{i+jqd+kpd\bmod pqd}}$ is constant in $k$, and
  \[B_{i+j'qd+kpd\bmod pqd}=\vn,\]
  for $j'\not\equiv j\bmod p$. This yields
  \[\abs{B_{i\bmod d}}=\abs{B_{i+jqd\bmod pd}}=p\abs{B_{i+jqd+kpd\bmod pqd}},\]
  for this value of $j$ and every $k$, which shows that $B_{i\bmod d}$ is absorbed $\bmod pd$. Absorption $\bmod pd$ must occur for at least one $i$, 
	otherwise we would have $B(\ze_{pd})=0$, contradiction.
 \end{proof}

 \begin{lemma}\label{ascend}
  Let $(A,B)$ be a spectral pair in $\ZZ_N$, $N=p^mq^n$, such that $B_{i\bmod p^k}$ is absorbed $\bmod p^{k+1}$ for every $i$, where $0\leq k<m$. Then, 
  \[\ol{A}(X)\equiv A(X)\Phi_{p^{m-k}}(X^{q^n})\bmod(X^N-1)\]
  is the mask polynomial of a spectral set, whose spectrum has the following mask polynomial:
  \[\ol{B}(X)\equiv B(X)\Phi_{p^{k+1}}(X^{q^n})\bmod(X^N-1).\]
  Moreover, $\ol{B}_{i\bmod p^k}$ is equidistributed $\bmod p^{k+1}$ for every $i$.
 \end{lemma}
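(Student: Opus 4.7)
The plan is to recognize $\ol{A}$ and $\ol{B}$ as direct sums with auxiliary summand sets $C, D$, and then verify the spectral pair property by a case analysis on $p$-adic valuations of differences. Let $C, D \ssq \ZZ_N$ be the subsets with mask polynomials
\[
C(X) = \Phi_{p^{m-k}}(X^{q^n}) = \sum_{j=0}^{p-1} X^{jN/p^{k+1}}, \qquad D(X) = \Phi_{p^{k+1}}(X^{q^n}) = \sum_{\ell=0}^{p-1} X^{\ell N/p^{m-k}},
\]
so that $\ol{A} = A + C$ and $\ol{B} = B + D$ as multisets. The nonzero elements of $C - C$ form the divisor class of elements of order $p^{k+1}$, while those of $D - D$ have $p$-adic valuation $v_p$ exactly $k$; accordingly, $Z(C) = \set{x \in \ZZ_N : v_p(x) = k}$.

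For direct sumness of $B + D$, I need $(B - B) \cap (D - D) = \set{0}$; the absorption hypothesis gives precisely that no nonzero element of $B - B$ has $v_p = k$ (within a common class $B_{i\bmod p^k}$ differences lie in $p^{k+1}\ZZ_N$, while across distinct classes $v_p < k$). For direct sumness of $A + C$, using $A - A \ssq \set{0} \cup Z(B)$, it suffices to prove $B(\ze_{p^{k+1}}) \ne 0$. I plan to deduce this from Theorem \ref{vansumspos} (applied to the prime power $p^{k+1}$): the reduction $\wt{B}$ of $B$ modulo $p^{k+1}$ is, by absorption, supported at a single residue above each nonempty fiber over $\ZZ_{p^k}$; but vanishing would force $\wt{B}$ to be a non-negative combination of indicator functions of cosets of $p^k\ZZ_{p^{k+1}}$, which is impossible unless $B = \vn$. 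Combined with $\abs{A} = \abs{B}$, this yields $\abs{\ol{A}} = p\abs{A} = p\abs{B} = \abs{\ol{B}}$.

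To verify $\ol{B} - \ol{B} \ssq \set{0} \cup Z(\ol{A})$, note that $Z(\ol{A}) = Z(A) \cup Z(C)$, and write a typical nonzero difference as $x = (b - b') + (d - d')$. Since absorption excludes $v_p(b - b') = k$, a split on the sign of $v_p(b - b') - k$ suffices: if $v_p(b - b') > k$, the $d - d'$ term dominates and $v_p(x) = k$, so $x \in Z(C)$; if $v_p(b - b') < k$, then $v_p(x) = v_p(b - b')$, and an analogous $v_q$ check gives $v_q(x) = v_q(b - b')$, placing $x$ in the same divisor class as $b - b'$ and hence in $Z(A)$ by divisibility of $A(X)$ by $\Phi_{\ord(b - b')}(X)$. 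The degenerate cases $b = b'$ (yielding $x \in Z(C)$) and $d = d'$ (yielding $x \in Z(A)$ by the spectral pair $(A, B)$) are immediate. Equidistribution of $\ol{B}_{i\bmod p^k}$ follows at once: since $D \ssq p^k\ZZ_N$, we have $\ol{B}_{i\bmod p^k} = B_{i\bmod p^k} + D$, and the residues of $D$ modulo $p^{k+1}$ form a complete set of coset representatives of $p^k\ZZ_{p^{k+1}}$, spreading the absorbed mass of $B_{i\bmod p^k}$ uniformly among the $p$ lifts. The main obstacle is the correct invocation of Theorem \ref{vansumspos} to extract $B(\ze_{p^{k+1}}) \ne 0$ from absorption, and the careful $v_p, v_q$ bookkeeping in the spectral pair case analysis.
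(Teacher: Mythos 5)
Your proof is correct and takes essentially the same approach as the paper: defining the summand sets with mask polynomials $\Phi_{p^{m-k}}(X^{q^n})$ and $\Phi_{p^{k+1}}(X^{q^n})$, verifying directness of the sums via the difference-set criterion (using absorption to rule out $v_p = k$ in $B - B$, and $B(\ze_{p^{k+1}})\neq 0$ to handle $A - A$), and then a $p$-adic valuation case split to confirm the spectral pair property. The only quibble is your invocation of Theorem \ref{vansumspos} for the single-prime-power group $\ZZ_{p^{k+1}}$ when establishing $B(\ze_{p^{k+1}})\neq 0$; the cleaner reference is to the classical fact (or Lemma \ref{ma}) that a vanishing nonnegative sum of $p^{k+1}$-th roots of unity is a union of $p$-cycles, but your conclusion is right.
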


 \begin{proof}
  Denote by $T$ the set whose mask polynomial is $\Phi_{p^{m-k}}(X^{q^n})$, so that
  \[T=\set{0,p^{m-k-1}q^n,2p^{m-k-1}q^n,\dotsc,(p-1)p^{m-k-1}q^n}.\]
  Since $B_{i\bmod p^k}$ is absorbed $\bmod p^{k+1}$ for every $i$, we have $B(\ze_{p^{k+1}})\neq0$, therefore by Theorem \ref{mainref}\ref{i} we get
  \[(A-A)\cap\frac{N}{p^{k+1}}\ZZ_N^{\star}=\vn.\]
  Moreover,
  \[T-T=\set{\ell p^{m-k-1}q^n:\abs{\ell}<p}\ssq \set{0}\cup\frac{N}{p^{k+1}}\ZZ_N^{\star},\]
  therefore $(A-A)\cap(T-T)=\set{0}$, hence every element in $A+T$ has a unique representation as $a+t$, with $a\in A$, $t\in T$ (proof is similar to that of Theorem \ref{mainref}\ref{ii}). This shows that
  $\ol{A}(X)$ is a mask polynomial of the set $\ol{A}=A\oplus T$. Furthermore, since each $B_{i\bmod p^k}$ is absorbed $\bmod p^{k+1}$, if $p^k\mid b-b'$ for some $b,b'\in B$, then $p^{k+1}\mid b-b'$ (definition
  of absorption). In other words,
  \begin{equation}\label{BminusB}
  (B-B)\cap (p^k\ZZ_N^{\star}\cup p^kq\ZZ_N^{\star}\cup\dotsb\cup p^kq^n\ZZ_N^{\star})=\vn.
  \end{equation}
  The set
  \begin{equation}\label{extendS}
  S=\set{0,p^kq^n,2p^kq^n,\dotsc,(p-1)p^kq^n}
  \end{equation}
  has mask polynomial $\Phi_{p^{k+1}}(X^{q^n})$ and satisfies
  \begin{equation}\label{SminusS}
  S-S=\set{\ell p^kq^n:\abs{\ell}<p}\ssq \set{0}\cup p^kq^n\ZZ_N^{\star},
  \end{equation}
  therefore $(B-B)\cap(S-S)=\set{0}$ and similarly as before, we deduce that $\ol{B}(X)$ is the mask polynomial of the set $\ol{B}=B\oplus S$. 
	We observe that $\abs{\ol{A}}=\abs{\ol{B}}$.
  
  Next, we will show that $\ol{A}(\ze_{\ord(\be-\be')})=0$ for every $\be,\be'\in\ol{B}$, $\be\neq \be'$. Every $\be\in \ol{B}$ has a unique representation as $b+s$, where $b\in B$ and $s\in S$; we write also
  $\be'=b'+s'$, $b'\in B$, $s'\in S$. By \eqref{SminusS}, $s-s'\in \set{0}\cup p^kq^n\ZZ_N^{\star}$, while $v_p(b-b')\neq k$ by \eqref{BminusB}. If $s=s'$, then $\be-\be'=b-b'$ and
  \[\ol{A}(\ze_{\ord(b-b')})=A(\ze_{\ord(b-b')})\Phi_{p^{m-k}}(\ze_{\ord(b-b')}^{q^n})=0,\]
  by Corollary \ref{specord}, since $B$ is a spectrum of $A$. So, assume $s\neq s'$, so that $s-s'\in p^kq^n\ZZ_N^{\star}$. Since $v_p(b-b')\neq k$, we will either have $p^{k+1}\mid b-b'$ or $p^k\nmid b-b'$.
  In the former case, $v_p(\be-\be')=k$, yielding $\ord(\be-\be')=p^{m-k}q^{r}$, where $0\leq r\leq n$. Hence,
  \[\ol{A}(\ze_{\ord(\be-\be')})=A(\ze_{\ord(\be-\be')})\Phi_{p^{m-k}}(\ze_{p^{m-k}q^r}^{q^n})=A(\ze_{\ord(\be-\be')})\Phi_{p^{m-k}}(\ze_{p^{m-k}}^{q^{n-r}})=0.\]
  In the latter case, $\ord(s-s')=p^{m-k}\mid \ord(b-b')$, therefore $\ord(b-b')=\ord(b-b'+s-s')=\ord(\be-\be')$, hence again Corollary \ref{specord} gives $\ol{A}(\ze_{\ord(\be-\be')})=0$. One more application
  of Corollary \ref{specord} gives the desired conclusion, as $\ol{A}(\ze_{\ord(\be-\be')})=0$, for every $\be,\be'\in \ol{B}$, $\be\neq\be'$. The second part of the Lemma follows from the fact that
  $\ol{B}(\ze_{p^{k+1}})=0$.
  \end{proof}

 \begin{cor}\label{multascend}
  Let $(A,B)$ be a spectral pair in $\ZZ_N$, $N=p^mq^n$. Let $K_p(B)\ssq[0,m-1]$ be the set of all $k$ such that $B_{i\bmod p^k}$ is absorbed $\bmod p^{k+1}$ for all $i$. Then
  \[\ol{A}(X)\equiv A(X)\prod_{k\in K_p(B)}\Phi_{p^{m-k}}(X^{q^n})\bmod(X^N-1)\]
  is the mask polynomial of a spectral set, whose spectrum has the following mask polynomial:
  \[\ol{B}(X)\equiv B(X)\prod_{k\in K_p(B)}\Phi_{p^{k+1}}(X^{q^n})\bmod(X^N-1).\]
  Moreover, for every $k$ with $p^{k+1}\mid N$, there is $i$ such that $\ol{B}_{i\bmod p^k}$ is not absorbed $\bmod p^{k+1}$ (i.e. the set $K_p(\ol{B})$ is empty).
 \end{cor}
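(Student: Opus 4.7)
The plan is to iterate Lemma~\ref{ascend} once for each $k\in K_p(B)$, taken in decreasing order. Writing $K_p(B)=\set{k_1<\dotsb<k_r}$, I set $(A^{(0)},B^{(0)})=(A,B)$ and, at step $j$, apply Lemma~\ref{ascend} at level $k_{r-j+1}$ to the current pair $(A^{(j-1)},B^{(j-1)})$. Each application multiplies the $A$-mask by $\Phi_{p^{m-k_{r-j+1}}}(X^{q^n})$ and the $B$-mask by $\Phi_{p^{k_{r-j+1}+1}}(X^{q^n})$, while preserving the spectral-pair property. After $r$ steps, the accumulated mask polynomials are exactly $\ol{A}(X)$ and $\ol{B}(X)$ as in the statement.

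The applicability of Lemma~\ref{ascend} at step $j$ follows by tracking the direct-sum structure
\[B^{(j-1)}=B\oplus S_{k_r}\oplus\dotsb\oplus S_{k_{r-j+2}},\]
where $S_k=\set{0,p^kq^n,\dotsc,(p-1)p^kq^n}$ has mask polynomial $\Phi_{p^{k+1}}(X^{q^n})$. Since I iterate in decreasing order, each previously added summand lies in $p^{k_{r-j+2}}\ZZ_N\ssq p^{k_{r-j+1}+1}\ZZ_N$, hence is trivial modulo $p^{k_{r-j+1}+1}$; absorption of $B^{(j-1)}_{i\bmod p^{k_{r-j+1}}}$ therefore reduces to absorption of $B_{i\bmod p^{k_{r-j+1}}}$, which holds by the definition of $K_p(B)$.

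For the moreover statement, fix $k$ with $p^{k+1}\mid N$. If $k\notin K_p(B)$, some $B_{i_0\bmod p^k}$ is not absorbed, giving $b_1,b_2\in B$ with $b_1-b_2\in p^k\ZZ_N\sm p^{k+1}\ZZ_N$; since $0\in S_{k_\ell}$ for each $\ell$ we have $B\ssq\ol{B}$, so the same pair witnesses non-absorption of $\ol{B}_{i_0\bmod p^k}$. If instead $k\in K_p(B)$, the step at level $k$ leaves the current spectrum equidistributed modulo $p^{k+1}$, and the crucial point---which I expect to be the main obstacle---is showing that this equidistribution survives every subsequent step (each of which adds cycles at a lower level $k'<k$). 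Concretely, whenever $B'$ is equidistributed at level $k$ and $B''=B'\oplus T$ for any set $T$, the computation
\[\abs{B''_{i+tp^k\bmod p^{k+1}}}=\sum_{s\in T}\abs{B'_{(i+tp^k-s)\bmod p^{k+1}}}=\tfrac{1}{p}\sum_{s\in T}\abs{B'_{(i-s)\bmod p^k}}\]
is independent of $t$, so $B''$ inherits equidistribution at level $k$. Iterating, $\ol{B}$ is equidistributed at level $k$; since $\ol{B}$ is nonempty, some $\ol{B}_{i\bmod p^k}$ is nonempty and splits into $p$ equal nonempty subclasses modulo $p^{k+1}$, ruling out absorption and hence confirming $k\notin K_p(\ol{B})$.
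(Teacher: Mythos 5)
Your proof is correct and follows the same essential strategy as the paper's (iterated application of Lemma~\ref{ascend}), but it is appreciably more careful about a point the paper leaves implicit. The paper simply says ``induction on $\abs{K_p(B)}$; the induction step is almost identical,'' whereas you correctly identify that the \emph{order} of iteration matters: you process the levels of $K_p(B)$ in \emph{decreasing} order precisely so that each summand $S_{k'}$ added at an earlier (larger) level lies in $p^{k+1}\ZZ_N$ at the current (smaller) level $k$, and hence is invisible to the absorption condition; this is the argument that makes Lemma~\ref{ascend} reapplicable at each step, and the paper does not spell it out. For the ``moreover'' part, the paper observes that $\Phi_{p^{k+1}}(X^{q^n})\mid \ol{B}(X)$ forces $\ol{B}(\ze_{p^{k+1}})=0$, which is equivalent to equidistribution at level $k$; you instead give a direct combinatorial count $\abs{B''_{i+tp^k\bmod p^{k+1}}}=\tfrac{1}{p}\sum_{s\in T}\abs{B'_{(i-s)\bmod p^k}}$ showing equidistribution at level $k$ is preserved under any further direct sum. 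Both arguments are valid and reach the same conclusion; the polynomial version is slightly slicker, but your combinatorial version has the advantage of pinpointing exactly why the direct-sum structure (uniqueness of the decomposition $b'+s$) is what is being used.
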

 
 \begin{proof}
  Induction on $\abs{K_p(B)}$; the case $\abs{K_p(B)}=1$ is Lemma \ref{ascend}, and the proof for the induction step is almost identical.
  For the second part, we proceed by contradiction; suppose that $K_p(\ol{B})\neq\vn$. By the second part of Lemma \ref{ascend}, we definitely have $K_p(B)\cap K_p(\ol{B})=\vn$, as for every $k\in K_p(B)$
  it holds $\ol{B}(\ze_{p^{k+1}})=0$, hence each $\ol{B}_{i\bmod p^k}$ is equidistributed $\bmod p^{k+1}$ (and not absorbed).
  
  Consider next some arbitrary $\ell\notin K_p(B)$, $0\leq \ell\leq m$. By definition, there is $i$ such that $B_{i\bmod p^{\ell}}$ is not absorbed $\bmod p^{\ell+1}$. Then, as $B\ssq\ol{B}$, it is obvious
  that $\ol{B}_{i\bmod p^k}$ is not absorbed $\bmod p^{k+1}$.
 \end{proof}

 \begin{defn}
  We call a set $A\ssq\ZZ_N$ \emph{absorption-free at $p^k$}, if for a prime $p$ and $k\in\NN$ with $p^{k+1}\mid N$, there is a class $A_{i\bmod p^k}$ that is not absorbed $\bmod p^{k+1}$ (i.e.
  $k\notin K_p(A)$).
  We call $A$ \emph{absorption-free}, if it is absorption-free at every prime power $p^k$, such that $p^{k+1}\mid N$ (i.e. $K_p(A)=\vn$, for every prime $p\mid N$).
 \end{defn}
 
 \begin{lemma}\label{absfreeredux}
  Let $(A,B)$ be a spectral pair in $\ZZ_N$, $N=p^mq^n$. Then, there are $S,T\ssq\ZZ_N$ such that $(A\oplus S,B\oplus T)$ is a spectral pair in $\ZZ_N$, with the additional property that $A\oplus S$ and $B\oplus T$ are
  absorption-free.
 \end{lemma}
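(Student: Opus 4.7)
The plan is to iteratively enlarge $A$ and $B$ by absorption-eliminating ascents via Corollary~\ref{multascend}, until no absorbed classes remain. First I would observe that if $(A,B)$ is a spectral pair then so is $(B,A)$: the matrix $\frac{1}{\sqrt{|A|}}(\zeta_N^{ab})_{a\in A,\,b\in B}$ is unitary (its columns are orthonormal by the spectral pair condition), hence its transpose, corresponding to $(B,A)$, is also unitary. This symmetry lets me invoke Corollary~\ref{multascend} either to eliminate absorption in $B$ (by applying it to $(A,B)$) or in $A$ (by applying it to $(B,A)$), at either of the primes $p$ or $q$.

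As long as at least one of the four sets $K_p(A), K_q(A), K_p(B), K_q(B)$ is non-empty, I would perform a single ascent: pick any non-empty $K$-set and apply the appropriate version of Corollary~\ref{multascend}. The ascent produces a new spectral pair $(A',B')=(A\oplus S',B\oplus T')$ in which both ascent sets have cardinality at least $\min(p,q)\geq 2$, so $|A|$ strictly increases. Since $|A|\leq N$, the iteration must terminate after finitely many steps, at which point no further ascent is possible: all four $K$-sets are empty, meaning both $A_{\mathrm{fin}}$ and $B_{\mathrm{fin}}$ are absorption-free by definition. The main obstacle here is that eliminating absorption in $B$ at $p$ can reintroduce absorption in $A$ (since both sets change under the ascent), so one might worry the process bounces indefinitely between the two sides or between the two primes; but the strict monotonicity of $|A|$ against the a priori bound $N$ rules this out.

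Finally, to realize $(A_{\mathrm{fin}},B_{\mathrm{fin}})$ as $(A\oplus S,B\oplus T)$ for genuine subsets $S,T\ssq\ZZ_N$, I would argue inductively that $A_i=A\oplus \mathcal{S}_i$ as a direct sum at every step, where $\mathcal{S}_i$ is built by iteratively adding the ascent sets $S_0', S_1', \dots, S_{i-1}'$. The induction step requires checking that $\mathcal{S}_{i+1}:=\mathcal{S}_i+S_i'$ is itself a direct sum; this follows from the direct-sum structure of $A_{i+1}=A_i\oplus S_i'$, since any identification $s_0+s_0'=s_1+s_1'$ with $s_j\in \mathcal{S}_i,\,s_j'\in S_i'$ would, after adding any $a\in A$, produce two decompositions of the same element of $A_{i+1}$, forcing $s_0=s_1$ and $s_0'=s_1'$ by uniqueness at that step. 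The analogous construction on the $B$-side provides the set $T$, completing the argument.
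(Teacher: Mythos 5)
Your proposal is correct and follows the same underlying strategy as the paper's proof — iteratively applying Lemma~\ref{ascend}/Corollary~\ref{multascend} to destroy absorption, using the observation that $(A,B)$ spectral implies $(B,A)$ spectral to attack absorption on either side and at either prime. Where it departs from, and actually improves on, the paper is the termination argument. The paper simply asserts that ``successively'' multiplying by the relevant cyclotomic-type polynomials yields an absorption-free pair; it does not address the possibility (which you correctly flag) that an ascent aimed at eliminating $K_p(A)$ modifies $B$ by a set containing elements of intermediate $p$-valuation, hence could reintroduce absorption at $p$ in $B$ (the Corollary~\ref{multascend} guarantee $K_p(\ol{B})=\vn$ protects only the side on which the ascent was performed, and only until the next ascent perturbs that side again). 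Your strict-monotonicity argument — each ascent multiplies $\abs{A}=\abs{B}$ by at least $\min(p,q)\geq2$, while $\abs{A}\leq N$ — cleanly rules out infinite bouncing, and the inductive direct-sum bookkeeping at the end is exactly what is needed to produce honest subsets $S,T\ssq\ZZ_N$ with $A\oplus S$ and $B\oplus T$ direct sums. One small point worth spelling out if you write this up formally: you use that adding a set contained in $p^m\ZZ_N$ (resp.\ $q^n\ZZ_N$) does not alter $K_p$ (resp.\ $K_q$), so ascents at one prime cannot silently undo progress at the other; this is easy but should be stated, since it is what keeps the iteration from having to chase four independent targets simultaneously. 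Overall the proposal is sound and supplies a termination argument the paper leaves implicit.
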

 
 \begin{proof}
  If both $A$ and $B$ are absorption-free, there is nothing to prove, as we can just take $S=T=\set{0}$. So, suppose first that $B$ is not absorption-free. The process described in Lemma \ref{ascend} and subsequently
  in Corollary \ref{multascend} ``kills'' all absorption, in the following sense: if $B_{i\bmod p^k}$ is absorbed $\bmod p^{k+1}$ for all $i$, then $\ol{B}_{i\bmod p^k}$ is 
  equidistributed $\bmod p^{k+1}$ for all $i$, since $\ol{B}(\ze_{p^{k+1}})=0$, and thus $\ol{B}$ is absorption-free at $p^k$. Moreover, for every $k\in[0,m-1]$, there is $i$ such that $\ol{B}_{i\bmod p^k}$ is not
  absorbed $\bmod p^{k+1}$. With the goal to eliminate all absorption from $A$ and $B$,
  we successively multiply $B(X)$ by polynomials of the form $\Phi_{p^{k+1}}(X^{q^n})$ or $\Phi_{q^{\ell+1}}(X^{p^m})$, whose product is a mask polynomial of a set $S$, 
  and respectively $A(X)$ by polynomials of the form $\Phi_{p^{m-k}}(X^{q^n})$ or $\Phi_{q^{n-\ell}}(X^{p^m})$, whose product is a mask polynomial of a set $T$.
  We thus obtain a pair of absorption-free 
  sets $(A\oplus S,B\oplus T)$, which is also spectral, by virtue of Lemma \ref{ascend} or Corollary \ref{multascend}.
 \end{proof}

 \begin{cor}\label{absfreetile}
  Suppose that every absorption-free spectral subset of $\ZZ_N$ tiles $\ZZ_N$. Then, every spectral subset of $\ZZ_N$ tiles $\ZZ_N$.
 \end{cor}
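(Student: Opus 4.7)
The plan is to use Lemma \ref{absfreeredux} as the key engine and then argue by simple associativity of direct sums. Given an arbitrary spectral subset $A\ssq\ZZ_N$ with a spectrum $B$, I would first invoke Lemma \ref{absfreeredux} to obtain sets $S,T\ssq\ZZ_N$ such that $(A\oplus S,\ B\oplus T)$ forms a spectral pair in $\ZZ_N$ in which both $A\oplus S$ and $B\oplus T$ are absorption-free. The cost of this step is absorbed into the fact that the direct sums $A\oplus S$ and $B\oplus T$ are genuine (no collisions), which is already guaranteed by the construction in that lemma.

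Next, I would apply the standing hypothesis of the corollary to the absorption-free spectral set $A\oplus S$, yielding a tiling complement $T'\ssq\ZZ_N$ with
\[(A\oplus S)\oplus T' = \ZZ_N.\]
The concluding step is to re-associate this direct sum. Uniqueness of representation of every $n\in\ZZ_N$ as $n=a+s+t'$ with $a\in A$, $s\in S$, $t'\in T'$, implies in particular that whenever $s_1+t_1' = s_2+t_2'$ for $s_i\in S$, $t_i'\in T'$, one has $s_1=s_2$ and $t_1'=t_2'$; hence $S\oplus T'$ is a well-defined direct sum. Setting $T'' = S\oplus T'$, we then get $A\oplus T'' = \ZZ_N$, i.e.\ $A$ tiles $\ZZ_N$, completing the proof.

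There is essentially no hard step here: the whole content has been offloaded to Lemma \ref{absfreeredux}, which already shows how to enlarge $A$ and $B$ using cyclotomic factors without breaking spectrality. The only potential pitfall is the bookkeeping ensuring that $(A\oplus S)\oplus T' = \ZZ_N$ really is a tripartite direct sum rather than merely a sumset equality, but this follows at once from the definition of a tiling pair (Theorem~\ref{mainref}\ref{ii}) applied to $A\oplus S$ and $T'$. Thus the corollary is a clean consequence of Lemma~\ref{absfreeredux} and associativity.
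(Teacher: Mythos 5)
Your proof is correct and follows essentially the same route as the paper: invoke Lemma~\ref{absfreeredux} to replace $(A,B)$ by an absorption-free spectral pair $(A\oplus S, B\oplus T)$, apply the hypothesis to tile with $(A\oplus S)\oplus T'=\ZZ_N$, and re-associate to conclude $A\oplus(S\oplus T')=\ZZ_N$. The only (welcome) difference is that you spell out the re-association step, which the paper leaves implicit.
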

 
 \begin{proof}
  Let $(A,B)$ be a spectral pair in $\ZZ_N$. If $A$ is absorption-free, then it tiles $\ZZ_N$ by hypothesis. If not, there are $S,T\ssq\ZZ_N$ by Lemma \ref{absfreeredux}, such that $(A\oplus T, B\oplus S)$ is
  an absorption-free spectral pair. By hypothesis, this shows the existence of $R\ssq\ZZ_N$, such that $(A\oplus T)\oplus R=\ZZ_N$, or alternatively, $A\oplus(T\oplus R)=\ZZ_N$, that is, $A$ tiles $\ZZ_N$.
 \end{proof}

 All the above motivate the following definition.
 
 \begin{defn}
  Let $N\in\NN$. Define the following subset of $\F(N)$
  \[\F_{\max} (N)=\set{A\ssq\ZZ_N:A\in \F(N), \text{ and }A\text{ has maximal size}}.\]
 \end{defn}
 
 Lemma \ref{absfreeredux} and Corollary \ref{absfreetile} imply that every $A\in\F_{\max}(N)$ is absorption-free, as well as every spectrum $B$ of $A$. Hence, Corollary \ref{absfreetile} implies
 \begin{equation}\label{absfreespec}
  N\in\Spq \Longleftrightarrow \F_{\max}(N)\neq\vn.
 \end{equation}

 
 

 \bigskip\bigskip
 
 \section{Symmetry of root patterns}
 \bigskip\bigskip
 
 We consider again $N\in\Spq'$, $A\in \F(N)$, and $B$ a spectrum of $A$. We recall Propositions \ref{primitivereduction},
\ref{Apq}, and Corollary \ref{Nroot}, which imply that $A$ and $B$ are both primitive, they satisfy \eqref{ax} and
\eqref{bx}, such that the polynomials $P,Q$ in \eqref{ax} are nonzero. Therefore, $A$ contains a coset from each of
 the subgroups $\frac{N}{p}\ZZ_N$ and $\frac{N}{q}\ZZ_N$, so using Theorem \ref{mainref}\ref{i} we obtain
\[\frac{N}{p}\ZZ_N\cup\frac{N}{q}\ZZ_N\ssq A-A\ssq\set{0}\cup Z(B),\]
and then Corollary \ref{specord} gives
 \begin{equation}\label{Bpqroots}
  B(\ze_p)=B(\ze_q)=0,
 \end{equation}
for any spectrum $B$ of $A$. If in addition $A\in\Fmax(N)$ holds, then $A$ and every spectrum $B$ are absorption-free by Lemma \ref{absfreeredux}. 
 
 \begin{defn}\label{m0n0}
  For $A\ssq\ZZ_N$ we define 
  \[S_A^N(p):=\set{\log_p s:s\in S_A^N, p\mid s}=\set{x\in[1,m]:A(\ze_{p^x})=0},\]
  and the definition of $S_A^N(q)$ is similar. Also, denote by $m_p(A)$ the maximal exponent $x\leq m$ such that $A(\ze_N^{p^x})\neq0$, and similarly define $m_q(A)$ to be the maximal exponent $y\leq n$ 
  such that $A(\ze_N^{q^y})\neq0$; if such exponents do not exist, we put $m_p(A)=-1$, $m_q(A)=-1$, respectively. 
    Lastly, define the following set
  \[
   R_A^N(p):=\set{x\in[0,m]:A(\ze_N^{p^x})=0},
  \]
  and similarly $R_A^N(q)$. The sets thus defined will be called \emph{root patterns} of $A$.
 \end{defn}

We remark that if the sets 
below are nonempty, then
\[m_p(A)=\max\bra{[0,m]\sm R_A^N(p)}, \;\;\;\; m_q(A)=\max\bra{[0,m]\sm R_A^N(q)}.\]
 For $A\in\F(N)$, the above sets are always nonempty, i.e. $m_p(A),n_p(A)\geq0$. This is a consequence of Ma's Lemma for the cyclic case \cite{Ma} (see also Lemma 1.5.1 \cite{SchmidtFDM}, or 
 Corollary 1.2.14 \cite{Pott}), using the polynomial notation.

\begin{lemma}\label{ma}
 Suppose that $A(X)\in\ZZ[X]$, and let $\ZZ_N$ be a cyclic group such that $p^m\pdiv N$, for $p$ prime. If $A(\ze_d)=0$, for every $p^m\mid d\mid N$, then
 \[A(X)\equiv P(X)\Php\bmod(x^N-1).\]
 If the coefficients of $A$ are nonnegative, then $P$ can be taken with nonnegative coefficients as well. In particular, if $A\subseteq\ZZ_N$ satisfies $A(\ze_d)=0$, for every $p^m\mid d\mid N$, then
 $A$ is a union of $p$-cycles.
\end{lemma}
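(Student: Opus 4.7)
The plan is to prove the divisibility $\Php \mid A(X)$ in $\ZZ[X]$ by matching roots, and then use a reduction modulo $X^{N/p}-1$ to extract positivity of the quotient. First I would exploit the factorization $X^N - 1 = (X^{N/p}-1)\Php$, valid since $\Phi_p(Y)(Y-1) = Y^p - 1$ specialized at $Y = X^{N/p}$. The roots of $\Php$ are precisely the $N$-th roots of unity $\ze_N^k$ with $p \nmid k$. Writing $N = p^m M$ with $\gcd(p,M)=1$, the order of $\ze_N^k$ is $N/\gcd(N,k)$, and $p \nmid k$ is equivalent to $p^m \mid \ord(\ze_N^k)$; hence the roots of $\Php$ are exactly the primitive $d$-th roots of unity as $d$ ranges over divisors of $N$ divisible by $p^m$. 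The hypothesis together with $A\in\ZZ[X]$ yields $\Phi_d \mid A(X)$ for every such $d$, so $A$ vanishes on all roots of $\Php$. Because $\Php$ divides the squarefree polynomial $X^N-1$ its roots are simple, and because it is monic in $\ZZ[X]$, I obtain $\Php \mid A(X)$ in $\ZZ[X]$; writing $A(X) = P(X)\Php$ with $P(X)\in\ZZ[X]$ gives the desired congruence modulo $X^N-1$.

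For the nonnegativity part, the key observation is $\Php \equiv p \pmod{X^{N/p}-1}$, since $\Phi_p(Y) = 1 + Y + \dotsb + Y^{p-1}$ collapses to $p$ when $Y = X^{N/p} \equiv 1$. Reducing the identity $A(X) = P(X)\Php$ modulo $X^{N/p}-1$ yields $A(X) \equiv p\,P(X) \pmod{X^{N/p}-1}$. If $A$ has nonnegative coefficients, then so does its reduction modulo $X^{N/p}-1$ (it is a sum of nonnegatives), and each such coefficient is automatically in $p\ZZ$ from the congruence above; dividing by $p$, the representative $P(X)$ of degree $< N/p$ has nonnegative integer coefficients.

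Finally, for the ``in particular'' assertion, if $A \subseteq \ZZ_N$ has a $\{0,1\}$-valued mask polynomial and we take $P$ of degree $< N/p$, then in the expansion $P(X)\Php = \sum_{j,i} p_j X^{j+iN/p}$ with $0 \leq j < N/p$ and $0 \leq i < p$, the exponents $j + iN/p$ are distinct and lie in $[0,N)$; no reduction modulo $X^N-1$ is required, and the coefficient of $X^k$ in $A(X)$ is simply $p_{k \bmod N/p}$. Hence $P$ itself is a $\{0,1\}$-polynomial, and $A$ is literally the union of the $p$-cycles $j+\tfrac{N}{p}\ZZ_N$ over those $j$ with $p_j=1$. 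I anticipate no serious obstacle beyond bookkeeping; the proof is essentially a matching-of-roots argument combined with the modular-reduction trick, and the two delicate points (integrality of $P$ and positivity of its coefficients) are handled by monicity of $\Php$ and by the congruence $\Php \equiv p \pmod{X^{N/p}-1}$, respectively.
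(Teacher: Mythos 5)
Your proof is correct. The paper does not prove this lemma itself — it is cited from Ma's thesis (see also Schmidt, Lemma~1.5.1, and Pott, Corollary~1.2.14) — so there is no internal proof to compare against, but your argument is the standard one and all the steps check out: the roots of $\Phi_p(X^{N/p})$ are exactly the primitive $d$-th roots with $p^m\mid d\mid N$, squarefreeness of $X^N-1$ gives $\Phi_p(X^{N/p})\mid A$ over $\QQ$, monicity gives integrality of the quotient $P$, and the reduction $\Phi_p(X^{N/p})\equiv p\pmod{X^{N/p}-1}$ forces the degree-$<N/p$ representative of $P$ to have nonnegative coefficients. The only point you leave implicit, which is worth a line, is that replacing $P$ by its remainder $\tilde P$ modulo $X^{N/p}-1$ still satisfies the congruence modulo $X^N-1$: indeed $(P-\tilde P)\Phi_p(X^{N/p})$ is a multiple of $(X^{N/p}-1)\Phi_p(X^{N/p})=X^N-1$. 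The final bookkeeping for the ``in particular'' clause, showing $P$ is $\{0,1\}$-valued and identifying $A$ as a union of $p$-cycles, is likewise correct.
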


Therefore, if $A\ssq\ZZ_N$ satisfies $m_p(A)=-1$ (resp. $m_q(A)=-1$), then it is a union of $q$-cycles (resp. $p$-cycles), so if $N\in\Spq'$ and $A$ spectral, then $A$ is also a tile by
Proposition \ref{Apq}.
 
 \begin{lemma}\label{rootpatterns}
  Let $N\in\Spq'$, $A\in\F(N)$ and $B$ any spectrum of $A$. Then, 
  we have the following relation between the root patterns of $A$ and $B$:
  \begin{eqnarray}\label{ABpsym}
  (S_B^N(p)-1)\cap[0,m_p(A)] &=& R_A^N(p)\cap[0,m_p(A)]\\ \label{ABqsym}
  (S_B^N(q)-1)\cap[0,m_q(A)] &=& R_A^N(q)\cap[0,m_q(A)]. 
  \end{eqnarray}
	Equations \eqref{ABpsym} and \eqref{ABqsym} are also 
  equivalent to the statements
  \begin{eqnarray}\label{ABpsymalt}
   \text{If }x\leq m_p(A), \text{ then it holds } A(\ze_N^{p^x})=0\Longleftrightarrow B(\ze_{p^{x+1}})=0\\ \label{ABqsymalt}
   \text{If }x\leq m_q(A), \text{ then it holds } A(\ze_N^{q^x})=0\Longleftrightarrow B(\ze_{q^{x+1}})=0. 
  \end{eqnarray}
 \end{lemma}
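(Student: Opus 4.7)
The equivalence between the set-theoretic formulations (\ref{ABpsym})--(\ref{ABqsym}) and the pointwise biconditionals (\ref{ABpsymalt})--(\ref{ABqsymalt}) is immediate from unfolding the definitions of $R_A^N(p),R_A^N(q),S_B^N(p),S_B^N(q)$: for example, $x\in(S_B^N(p)-1)\cap[0,m_0]$ means $x+1\in S_B^N(p)$ and $x\le m_0$, i.e.\ $B(\ze_{p^{x+1}})=0$ together with $x\le m_0$, while $x\in R_A^N(p)\cap[0,m_0]$ means $A(\ze_N^{p^x})=0$ together with $x\le m_0$. I therefore focus on proving \eqref{ABpsymalt}; \eqref{ABqsymalt} follows by interchanging $p$ and $q$. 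The base case $x=0$ is automatic: $A(\ze_N)=0$ by Corollary \ref{Nroot} and $B(\ze_p)=0$ by \eqref{Bpqroots}, so both sides of the biconditional hold.

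For the direction $A(\ze_N^{p^x})=0\Rightarrow B(\ze_{p^{x+1}})=0$ with $x\le m_0$, I would apply Theorem \ref{vansumspos} to the multiset $p^x\cdot A$: its mask polynomial $A(X^{p^x})\bmod(X^N-1)$ has nonnegative coefficients and vanishes at $\ze_N$ by hypothesis, so it decomposes as $p^x\cdot A\equiv P_x(X)\Php+Q_x(X)\Phq\pmod{X^N-1}$. The hypothesis $x\le m_0$ combined with $A(\ze_N^{p^x})=0$ forces $x<m_0$ (else it would contradict the definition of $m_0$), whence $(p^x\cdot A)(\ze_N^{p^{m_0-x}})=A(\ze_N^{p^{m_0}})\neq 0$; the second half of Theorem \ref{vansumspos} then ensures $P_x\not\equiv 0$. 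Consequently $p^x\cdot A$ contains at least one $p$-cycle $c+\tfrac{N}{p}\ZZ_N$, and pulling it back through the multiplication-by-$p^x$ map produces $a_0,a_1\in A$ satisfying $p^x(a_1-a_0)\equiv p^{m-1}q^n\pmod N$, which forces $\ord(a_1-a_0)=p^{x+1}$. Corollary \ref{specord} applied to the spectral pair then yields $B(\ze_{p^{x+1}})=0$.

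For the converse $B(\ze_{p^{x+1}})=0\Rightarrow A(\ze_N^{p^x})=0$ with $x\le m_0$, I would symmetrically apply Theorem \ref{vansumspos} to the dual multiset $p^{m-x-1}q^n\cdot B$, whose mask evaluated at $\ze_N$ equals $B(\ze_{p^{x+1}})=0$. This gives $p^{m-x-1}q^n\cdot B\equiv P_B(X)\Php+Q_B(X)\Phq\pmod{X^N-1}$. A direct Galois computation shows that $(p^{m-x-1}q^n\cdot B)(\ze_N^{q^\ell})$ is, for every $\ell\in[1,n]$, a Galois conjugate of $B(\ze_{p^{x+1}})$ (the resulting exponent lies in the divisor class $p^{m-x-1}\ZZ_N^\star$ of elements of order $p^{x+1}$), and hence vanishes; the second half of Theorem \ref{vansumspos} forces $Q_B\equiv 0$, so $p^{m-x-1}q^n\cdot B$ is a union of $p$-cycles. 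I would then invoke Lemma \ref{roothunt} applied to the spectral pair $(B,A)$ with $d=p^{m-x-1}q^{n-1}$: provided $B(\ze_{p^{x+1}q})=B(\ze_N^d)\neq 0$, the lemma delivers $A(\ze_{pqd})=A(\ze_{p^{m-x}q^n})=A(\ze_N^{p^x})=0$ as desired. If instead $B(\ze_{p^{x+1}q})=0$, one descends by shrinking the $q$-part of $d$: each additional vanishing $B(\ze_{p^{x+1}q^j})=0$ accumulated for $j=0,\ldots,k-1$ enlarges the class of $d$ for which $(qd)\cdot B$ remains a union of $p$-cycles, keeping Lemma \ref{roothunt} applicable at deeper levels.

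The principal technical obstacle I anticipate is the completion of this descent in the converse direction: one must exhibit some level of the descent at which $B$ does not vanish, giving access to Lemma \ref{roothunt}, or else derive a contradiction via Lemma \ref{ma} by collecting enough vanishings of $A$ along the $q$-line. The condition $x\le m_0$ is crucial here because together with Proposition \ref{Apq} (which precludes $A$ from being a pure union of $p$- or $q$-cycles) it prevents the configuration from collapsing into a degenerate structure that would stall the descent indefinitely.
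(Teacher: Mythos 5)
Your reduction of \eqref{ABpsym}--\eqref{ABqsym} to the pointwise biconditionals \eqref{ABpsymalt}--\eqref{ABqsymalt} is just unfolding definitions and is fine, as is the base case $x=0$. Your forward direction $A(\ze_N^{p^x})=0\Rightarrow B(\ze_{p^{x+1}})=0$ (for $x<m_0$) is essentially identical to the paper's: apply Theorem \ref{vansumspos} to $p^x\cdot A$, use $A(\ze_N^{p^{m_0}})\neq0$ to force the $p$-part of the decomposition to be nonzero, locate $a,a'\in A$ with $\ord(a-a')=p^{x+1}$, and invoke Corollary \ref{specord}. That part is correct.

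The converse direction, however, has a genuine gap, and you seem half-aware of it. You propose to apply Lemma \ref{roothunt} to the pair $(B,A)$ with $d=p^{m-x-1}q^{n-1}$. This requires $B(\ze_N^d)=B(\ze_{p^{x+1}q})\neq0$; if that fails, you suggest descending to $d=p^{m-x-1}q^{n-1-k}$. But then the conclusion of Lemma \ref{roothunt} is $A(\ze_{pqd})=0$, i.e.\ $A(\ze_{p^{m-x}q^{n-k}})=A(\ze_N^{p^xq^k})=0$ --- not $A(\ze_N^{p^x})=0$. Only the level $k=0$ delivers the desired conclusion, and that level is available precisely when $B(\ze_{p^{x+1}q})\neq0$, which is not guaranteed. (Indeed, the situation $B(\ze_{p^{x+1}})=B(\ze_{p^{x+1}q})=0$ is not pathological at all: the whole set $U_B^N(p)$, introduced just after this lemma, is designed to detect the levels where it does \emph{not} happen.) There is no path from $A(\ze_N^{p^xq^k})=0$ with $k>0$ back to $A(\ze_N^{p^x})=0$ in your argument, and you cannot at this point in the paper rule out $B$ being a union of $q$-cycles (the remark after Proposition \ref{Apq} explicitly flags that this is exactly the hard case, whose resolution in Corollary \ref{Bpq} depends on the present lemma). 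A minor additional quibble: concluding $Q_B\equiv 0$ in your decomposition of $p^{m-x-1}q^n\cdot B$ should be done via Lemma \ref{ma} (checking vanishing at all $\ze_{p^mq^j}$, $0\le j\le n$), not the one-directional second half of Theorem \ref{vansumspos}.

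The paper's converse is proved quite differently: assume there is a smallest $y\le m_0$ with $A(\ze_N^{p^y})\neq0=B(\ze_{p^{y+1}})$; using the forward direction already established, classify for each $x<y$ whether $B_{i\bmod p^x}$ is equidistributed or absorbed $\bmod p^{x+1}$, and exploit $A(\ze_N^{p^x})\neq0$ to show that any failure of both forces $B_{i\bmod p^x}\ssq B_{j\bmod q}$. This yields a partition $B=\bigsqcup_i B_{i\bmod p^{x(i)}}$ into pieces of cardinality $\abs{B}/p^{k(i)}$, each lying in a single residue class $\bmod q$; summing cardinalities over the pieces inside $B_{0\bmod q}$ and using $B(\ze_q)=0$ gives an identity of the form $1/q=s/p^t$, impossible. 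You would need something of this counting flavour (or a completed and corrected descent) to close the gap.
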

 
 \begin{proof}
Since $A\in\F(N)$ and $N\in\Spq'$, we obtain that
  $A$ cannot be expressed as a union of $p$- or $q$-cycles exclusively, by virtue of Theorem \ref{vansumspos}, Corollary \ref{Nroot} and
	Proposition \ref{Apq}. Thus,
	$m_p(A), m_q(A)\geq0$. Let $x< m_p(A)$ and $A(\ze_N^{p^x})=0$. Then, $p^x\cdot A$ is a union of
  $p$- and $q$-cycles; not exclusively of $q$-cycles by Theorem \ref{vansumspos} (second part), because $A(\ze_N^{p^{m_p(A)}})\neq0$. 
	So, there are $a,a'\in A$, such that 
  \[p^x(a-a')\equiv\frac{N}{p}\bmod N,\]
  or equivalently,
  \[a-a'\equiv\frac{N}{p^{x+1}}\bmod \frac{N}{p^x},\]
  which in turn implies $a-a'\in\frac{N}{p^{x+1}}\ZZ_N^{\star}$, whence $B(\ze_{p^{x+1}})=0$ by Theorem \ref{mainref}. This proves one
	direction of \eqref{ABpsymalt}.
  
  Next, suppose that the reverse implication of \eqref{ABpsymalt} does not hold, and let $y$ be the smallest exponent such that 
	$A(\ze_N^{p^y})\neq0=B(\ze_{p^{y+1}})$. For every $x\leq y$ and every $i$ we have the obvious partition
  \[B_{i\bmod p^x}=\bigcup_{k=1}^p B_{i+kp^x\bmod p^{x+1}}.\]
  If $A(\ze_N^{p^x})=0$, so that $B(\ze_{p^{x+1}})=0$, the above partition is an equidistribution $\bmod p^{x+1}$, that is
  \[\abs{B_{i\bmod p^x}}=p\abs{B_{i+kp^x\bmod p^{x+1}}},\]
  for every $k$. If $A(\ze_N^{p^x})\neq0\neq B(\ze_{p^{x+1}})$, we certainly do not have equidistribution for at least one choice of $j$; if we do not have absorption $\bmod p^{x+1}$, i.e.
  $B_{i\bmod p^x}=B_{i+kp^x\bmod p^{x+1}}$, then there are $k,k'$ with $k\not\equiv k'\bmod p$, such that $B_{i+kp^x\bmod p^{x+1}}$ and $B_{i+k'p^x\bmod p^{x+1}}$ are nonempty. Let
  \[b\in B_{i+kp^x\bmod p^{x+1}}, b'\in B_{i+k'p^x\bmod p^{x+1}},\]
  be arbitrary.
  It holds $p^x\pdiv b-b'$ by assumption, and $b-b'\notin p^x\ZZ_N^{\star}$ due to $A(\ze_N^{p^x})\neq0$. This shows $q\mid b-b'$, therefore all elements of $B_{i+kp^x\bmod p^{x+1}}$ and
  $B_{i+k'p^x\bmod p^{x+1}}$ have the same residue $\bmod q$. The same holds for all other elements of $B_{i\bmod p^x}$, establishing
  \[B_{i\bmod p^x}\ssq B_{j\bmod q}\]
  for some $j$. Now, for each $i\in[0,p^y-1]$ define $x(i)$ to be the smallest exponent $x$ such that $B_{i\bmod p^x}$ is neither absorbed nor equidistributed $\bmod p^{x+1}$, and then define $j(i)$ such that
  \begin{equation}\label{pfitsq}
  B_{i\bmod p^{x(i)}}\ssq B_{j(i)\bmod q}.
  \end{equation}
  If for every $x\leq y$, $B_{i\bmod p^x}$ is either absorbed or equidistributed $\bmod p^{x+1}$, define $x(i)=y$; since $B_{i\bmod p^y}$ is equidistributed $\bmod p^{y+1}$ and $A(\ze_N^{p^y})\neq0$, we conclude
  again that there is again some $j(i)$ such that \eqref{pfitsq} still holds. On $[0,p^y-1]$ we define an equivalence relation: it holds $i\sim i'$ if and only if $i\equiv i'\bmod p^{x(i)}$ (it is not hard
  to show that this satisfies the properties of an equivalence relation, as $i\equiv i'\bmod p^{x(i)}$ implies $x(i)=x(i')$).
   Let $I\ssq[0,p^y-1]$ be a complete system of representatives; it holds
  \[B=\bigsqcup_{i\in I}B_{i\bmod p^{x(i)}}.\]
  The important fact about the above partition, is that each $B_{i\bmod p^{x(i)}}$ has cardinality $\frac{\abs{B}}{p^{k(i)}}$, for some positive
	integer $k(i)$. Indeed, as for every $x<x(i)$ we have either $\abs{B_{i\bmod p^x}}=\abs{B_{i\bmod p^{x+1}}}$ (absorption) or
	$\abs{B_{i\bmod p^x}}=p\abs{B_{i\bmod p^{x+1}}}$ (equidistribution).
	Then, by \eqref{pfitsq} we have
  \[B_{0\bmod q}=\bigsqcup_{i\in I, j(i)=0}B_{i\bmod p^{x(i)}},\]
  and taking cardinalities on both sides we obtain
  \[\frac{\abs{B}}{q}=\sum_{i\in I, j(i)=0}\frac{\abs{B}}{p^{k(i)}},\]
  which then leads to an equation of the form $1/q=s/p^t$, which has no solutions in integers, contradicting the fact that $A(\ze_N^{p^y})\neq0=B(\ze_{p^{y+1}})$. This completes the proof.
 \end{proof}

 \begin{prop}\label{maxpqroots}
  Let $N\in\Spq'$, $A\in\F(N)$, and $B$ be any spectrum of $A$. Then, 
  \[A(\ze_{q^n})=A(\ze_{p^m})=0\]
  and
  \[B(\ze_{p^{m_p(A)+1}q})=B(\ze_{pq^{m_q(A)+1}})=0.\]
 \end{prop}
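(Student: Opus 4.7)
The plan is to exploit the $p\leftrightarrow q$ symmetry of the four conclusions: they split into two symmetric pairs under interchange of $p,q$ (and $m_0,n_0$), so it suffices to prove $A(\ze_{q^n})=0$ and $B(\ze_{p^{m_0+1}q})=0$, with the other two statements following automatically.

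For the first assertion, I would evaluate the decomposition \eqref{ax} at $X=\ze_{q^n}$: using $\Phi_p(\ze_{q^n}^{N/p})=\Phi_p(1)=p$ and $\Phi_q(\ze_{q^n}^{N/q})=\Phi_q(\ze_q^{p^m})=0$, this yields $A(\ze_{q^n})=p\,P(\ze_{q^n})$, so the claim is equivalent to $m_0<m$. I would argue by contradiction, assuming $m_0=m$; under this assumption Lemma \ref{rootpatterns} collapses to the full identification $S_B^N(p)-1=R_A^N(p)$, which tightly determines the $p$-level root pattern of $B$ from that of $A$. Combining this with the analogous constraints on the $q$-side, with \eqref{Bpqroots}, with the decomposition \eqref{bx}, and with the primitivity of $B$ from Proposition \ref{primitivereduction}, my goal is to force the $S$-component of \eqref{bx} to vanish, making $B$ a union of $p$-cycles only and thereby contradicting Proposition \ref{Apq} applied to $B$ (provided $B\in\F(N)$, which I would verify as an auxiliary step using the $\Spqmin$-minimality of $N$).

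For the second assertion, with $m_0<m$ now in hand, I apply Lemma \ref{roothunt} at $d=p^{m_0}$: the conclusion is precisely $B(\ze_{pqd})=B(\ze_{p^{m_0+1}q})=0$. The hypothesis $A(\ze_N^{p^{m_0}})\neq 0$ is the definition of $m_0$. For the second hypothesis, that $(qp^{m_0})\cdot A$ is a union of $p$-cycles in $\ZZ_N$, Lemma \ref{ma} reduces the check to verifying $A(\ze_{p^{m-m_0}q^j})=0$ for $0\leq j\leq n-1$; these vanishings I would derive from the first assertion ($A(\ze_{p^m})=A(\ze_{q^n})=0$), the maximality of $m_0$ (which yields $A(\ze_{p^jq^n})=0$ for all $j<m-m_0$), and Theorem \ref{vansumspos} applied to $A$ to extract the remaining mixed vanishings.

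The main obstacle is the contradiction step in the first assertion: one cannot apply Lemma \ref{roothunt} at $d=p^{m_0}=p^m$ directly, since this would require $pqd\mid N$, which already forces $m_0<m$. The argument must therefore proceed through the indirect structural interplay of $A$ and $B$ via Lemmas \ref{rootpatterns} and (perhaps) \ref{abseqd} together with Propositions \ref{primitivereduction} and \ref{Apq}; the most delicate auxiliary fact will be confirming $B\in\F(N)$, so that the $p\leftrightarrow q$-swap of Proposition \ref{Apq} genuinely applies to $B$.
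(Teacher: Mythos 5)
Your outline diverges from the paper in both halves, and in each case there is a concrete gap.

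For the first conclusion, the reduction ``$A(\ze_{q^n})=0\Leftrightarrow m_0<m$'' is correct, and assuming $m_0=m$ and using Lemma \ref{rootpatterns} to get $S_B^N(p)-1=R_A^N(p)$ on all of $[0,m]$ is the right starting point. But your stated goal---to force the $S$-component of \eqref{bx} to vanish, i.e.\ to make $B$ a union of $p$-cycles only, and then contradict Proposition \ref{Apq} applied to $B$---does not follow from the listed constraints, and you give no mechanism for it. Note also that applying Proposition \ref{Apq} to $B$ requires $B\in\F(N)$, i.e.\ that $B$ does not tile; at this point in the paper that is not available (the analogue for $B$, Corollary \ref{Bpq}, is derived \emph{after} this proposition and actually uses it through Lemma \ref{t1fail}), so this step is both unjustified and risks circularity. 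The paper's proof avoids all of this: it rerun the counting argument from the proof of Lemma \ref{rootpatterns} with $y$ replaced by $m$, obtaining a partition $B=\bigsqcup_i B_{i\bmod p^{x(i)}}$ into pieces of size $\abs{B}/p^{k(i)}$ each contained in a single residue class $\bmod q$, and then the equidistribution $B(\ze_q)=0$ forces an identity of the form $1/q=\sum 1/p^{k(i)}$, which is impossible. No structural claim about $S$ is ever made.

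For the second conclusion, you invoke Lemma \ref{roothunt} as literally stated, with $d=p^{m_0}$ and the hypothesis that $(qd)\cdot A=(qp^{m_0})\cdot A$ is a union of $p$-cycles. Via Lemma \ref{ma} that requires $A(\ze_{p^{m-m_0}q^j})=0$ for all $0\leq j\leq n-1$, equivalently $A(\ze_N^{p^{m_0}q^\ell})=0$ for $1\leq\ell\leq n$. These are \emph{not} supplied by the maximality of $m_0$: maximality gives $A(\ze_N^{p^x})=0$ for $x>m_0$, i.e.\ $A(\ze_{p^jq^n})=0$ for $j<m-m_0$, which are different divisor classes (the $p$-part here is $p^{m-m_0}$, not strictly smaller). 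In particular $A(\ze_{p^{m-m_0}})=A(\ze_N^{p^{m_0}q^n})$ is not known to vanish at this stage, and Theorem \ref{vansumspos} does not produce it either---it only gives the $p$/$q$-cycle decomposition at the root $\ze_N$, not new vanishings at other roots. The paper instead applies the $p\leftrightarrow q$-swapped form of Lemma \ref{roothunt} with $d=p^{m_0}\mid N/p$ and $(pd)\cdot A=p^{m_0+1}\cdot A$ a union of $q$-cycles. That hypothesis \emph{does} follow: the required vanishings are $A(\ze_{q^n})=0$ (the first conclusion, just proved) together with $A(\ze_N^{p^x})=0$ for $m_0<x<m$, i.e.\ exactly the maximality of $m_0$ plus the first part. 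This asymmetry is why your version of the hypothesis cannot be checked while the paper's can.
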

 
 \begin{proof}
  We will show first that $A(\ze_{q^n})=A(\ze_{p^m})=0$. Suppose that it doesn't hold, say $A(\ze_{q^n})\neq0$, so that $m_p(A)=m$. By \eqref{ABpsym} we get
  \[S_B^N(p)-1 = R_A^N(p).\]
  Using the same argument as in the proof of Lemma \ref{rootpatterns}, replacing $y$ by $m$ we get
  \[B=\bigsqcup_{i\in I}B_{i\bmod p^{x(i)}},\]
  where each $B_{i\bmod p^{x(i)}}$ has cardinality $\frac{\abs{B}}{p^{k(i)}}$ and is contained in $B_{j(i)\bmod q}$; this is also true if $x(i)=m$, as $A(\ze_N^{p^m})\neq0$. We establish a contradiction 
  in the same way.
  
  This shows $m_p(A)<m$ and $m_q(A)<n$. Since $B(\ze_{p^{m_p(A)+1}})\neq0$ by \eqref{ABpsymalt}, the polynomial
  \[\ol{A}(X)\equiv A(X)\Phi_p(X^{p^{m-m_p(A)-1}q^n})\bmod(X^N-1)\]
  is the mask polynomial of a set, say $\ol{A}$; this follows from the fact that $(A-A)\cap\frac{N}{p^{m_p(A)+1}}\ZZ_N^{\star}=\vn$, which implies that $A-A$ and the difference set whose mask polynomial
  is $\Phi_p(X^{p^{m-m_p(A)-1}q^n})$ intersect trivially. Consider now the multiset $p^{m_p(A)}\cdot A$. By definition, its mask polynomial does not vanish on $\ze_N$, while on the other hand, $p^{m_p(A)+1}\cdot A$
  is a disjoint union of $q$-cycles by Lemma \ref{ma}. By Lemma \ref{roothunt} and Theorem \ref{mainref}\ref{i}, we obtain $B(\ze_{p^{m_p(A)+1}q})=0$, and similarly $B(\ze_{pq^{m_q(A)+1}})=0$, as desired.  
 \end{proof}
 
 Using the above notation, condition \ref{t1} can be rewritten as
 \[\abs{A}=p^{\abs{S_A^N(p)}}q^{\abs{S_A^N(q)}}.\]
 At any rate, regardless whether $A$ is spectral or not, it holds
 \[p^{\abs{S_A^N(p)}}q^{\abs{S_A^N(q)}}\mid \abs{A}.\]
 We restrict now to $A\in \Fmax(N)$, $N\in\Spq'$, due to \eqref{absfreespec}.
 In this case, we will show that \ref{t1} fails for $B$ when the multiset $\frac{N}{p^{m_p(A)+1}q}\cdot B$ is not a union of $q$-cycles only; in particular, the power of $p$ is higher 
 than it is supposed to be.

 \begin{lemma}\label{t1fail}
  Let $N\in\Spq'$, $A\in\Fmax(N)$, and $B$ any spectrum of $A$, so that $m_p(A), m_q(A)\geq0$. Consider the set
  \[U_B^N(p)=\set{x: B(\ze_{p^x})\neq0=B(\ze_{p^xq})}.\]
  Then,
  \[p^{\abs{S_B^N(p)}+\abs{U_B^N(p)}}\mid \abs{B}.\]
  Moreover, for every $j$, every $x\in [1,m_p(A)+1]\sm S_B^N(p)$, and every $y\in [1,m_q(A)+1]\sm S_B^N(q)$, each of the sets
  \[\bra{\frac{N}{p^xq}B}_{j\bmod\frac{N}{pq}}=\frac{N}{p^xq}B\cap(j+\frac{N}{pq}\ZZ_N), \;\;\;\;	\bra{\frac{N}{pq^y}B}_{j\bmod\frac{N}{pq}}=\frac{N}{pq^y}B\cap(j+\frac{N}{pq}\ZZ_N),\]
  is supported either on a $p$- or a $q$-cycle.
 \end{lemma}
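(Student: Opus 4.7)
For the structural ``moreover'' assertion, fix $x\in U_B^N(p)\cap[1,m_0+1]$ and set $d=p^{x-1}$. The conditions $B(\ze_{pd})\neq0=B(\ze_{pqd})$ are precisely the definition of $x\in U_B^N(p)$. The remaining hypothesis of Lemma \ref{abseqd}, namely $A(\ze_N^d)\neq0$, follows from the root-pattern symmetry: since $x-1\leq m_0$, equation \eqref{ABpsymalt} applies with $x'=x-1$, and its contrapositive yields $A(\ze_N^{p^{x-1}})\neq0$ from $B(\ze_{p^x})\neq0$. Lemma \ref{abseqd} then gives that each $B_{i\bmod p^{x-1}}$ is absorbed or equidistributed $\bmod p^x$; inspection of its proof shows that the multiset $\frac{N}{p^xq}\cdot B_{i\bmod p^{x-1}}$ is supported on a single $p$-cycle (equidistributed case) or $q$-cycle (absorbed case) with constant multiplicity, contained in the $pq$-cycle $\frac{iN}{p^xq}+\frac{N}{pq}\ZZ_N$. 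Identifying $\frac{N}{p^xq}B\cap(j+\frac{N}{pq}\ZZ_N)$ with the image of the appropriate $B_{i\bmod p^{x-1}}$ yields the stated structure. The corresponding claim for $y\in U_B^N(q)\cap[1,n_0+1]$ follows by swapping $p$ and $q$.

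For the divisibility $p^{\abs{S_B^N(p)}+\abs{U_B^N(p)}}\mid\abs{B}$, the factor $p^{\abs{S_B^N(p)}}$ is immediate from the cyclotomic factorization: the polynomials $\Phi_{p^x}(X)$ for $x\in S_B^N(p)$ are pairwise coprime monic elements of $\ZZ[X]$ all dividing $B(X)$, so by Gauss's lemma their product divides $B(X)$ in $\ZZ[X]$, and evaluating at $X=1$ yields the divisibility since $\Phi_{p^x}(1)=p$. For the additional factor $p^{\abs{U_B^N(p)}}$, I use the structural dichotomy from the moreover part: under the present hypotheses, each $x\in U_B^N(p)$ satisfies $x\geq 2$ (since $1\in S_B^N(p)$ by \eqref{Bpqroots}, hence $1\notin U_B^N(p)$) and $x\leq m_0+1$ (by a mirror argument applied to the pair $(B,A)$ combined with Proposition \ref{maxpqroots}), so the moreover part is available at every $x\in U_B^N(p)$. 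Then the partition $\abs{B}=\sum_i\abs{B_{i\bmod p^{x-1}}}$ decomposes so that each summand is divisible by $p$ (equidistributed case) or by $q$ (absorbed case, via the $q$-cycle structure). An induction on $\abs{U_B^N(p)}$, peeling off the largest $x$ at each step and tracking how the $p$-adic valuation of $\abs{B}$ propagates through the $S_B^N(p)$- and $U_B^N(p)$-levels, delivers the claimed divisibility.

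The main difficulty lies in the divisibility claim. Since $\Phi_{p^xq}(1)=1$, each $x\in U_B^N(p)$ contributes nothing to $\abs{B}$ through direct cyclotomic factorization at $X=1$, so the extra $p$-divisibility must be extracted from the interaction of the structural dichotomies across distinct levels. The essential use of the hypotheses $N\in\Spqmin$ and $A\in\F'(N)$ (primitivity, absorption-freeness) is precisely to enforce the rigid structure needed for the propagation of $p$-adic valuations to remain compatible across levels; I expect this inductive step to require the most careful bookkeeping.
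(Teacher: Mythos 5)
Your treatment of the ``moreover'' part is correct and in fact coincides with the paper's: you invoke the root-pattern symmetry (equation \eqref{ABpsymalt}) to get $A(\ze_N^{p^{x-1}})\neq0$ from $x\leq m_0+1$ and $B(\ze_{p^x})\neq0$, and then the structure of $\frac{N}{p^xq}B$ restricted to a $pq$-cycle follows as in the proof of Lemma \ref{abseqd} / Proposition \ref{preroothunt}. The paper proves it by contradiction using Proposition \ref{preroothunt} directly, but the content is the same.

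The divisibility claim is where your argument has a genuine gap, and it is not a bookkeeping issue. Your dichotomy for a level $x\in U_B^N(p)$ is that each class $B_{i\bmod p^{x-1}}$ is either equidistributed (contributing a factor $p$) or absorbed $\bmod p^x$ in a $q$-cycle configuration (contributing a factor $q$). But a factor of $q$ is useless here: you are trying to manufacture a power of $p$, and whenever a class is absorbed, $\abs{B_{i\bmod p^{x-1}}}=\abs{B_{i\bmod p^x}}$ and the $p$-adic valuation does \emph{not} drop at that level for that class. An induction that ``peels off the largest $x$'' therefore cannot get past a level at which the class you are following happens to be absorbed, and you have given no reason why absorption cannot occur along the branch you care about. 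Relatedly, your claim $U_B^N(p)\subseteq[1,m_0+1]$ is asserted via an unexplained ``mirror argument''; Lemma \ref{rootpatterns} only controls $S_B^N(p)$ in the window $[1,m_0+1]$, and I do not see a proof that $B(\ze_{p^x})\neq0$ is impossible for $x>m_0+1$.

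The paper's argument supplies exactly the missing ingredient. It fixes the minimal $x\notin S_B^N(p)$, chooses $i$ so that $B_{i\bmod p^{x-1}}$ is not absorbed $\bmod p^x$ (possible because $B$ is absorption-free), and shows from Lemma \ref{rootpatterns} (valid since $x-1\leq m_0$) that every element of $B_{i\bmod p^{x-1}}$ has the \emph{same residue} $\bmod q$. Then it follows a nested chain $i=i_{x-1},i_x,\dots,i_m$ with $i_y\equiv i\bmod p^{x-1}$ down through the $p$-adic levels. The crucial observation is that for any $y\in U_B^N(p)$ (with no restriction $y\leq m_0+1$), a $q$-cycle in the decomposition of $\frac{N}{p^yq}\cdot B_{i_{y-1}\bmod p^{y-1}}$ would have a preimage in $B$ exhibiting all residues $\bmod q$, which is impossible on this chain. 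Hence the class must be a union of $p$-cycles, i.e.\ equidistributed, and the valuation drops by exactly one. Combined with the exact drop of one at each $y\in S_B^N(p)$ and non-increase elsewhere, this yields $v_p(\abs{B})\geq\abs{S_B^N(p)}+\abs{U_B^N(p)}$. Your proposal never establishes that absorption cannot happen along the chain you follow, and that constant-residue-$\bmod q$ property on the chain beyond $x-1$ is precisely what rules it out.
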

 
 \begin{proof}
  We first note, that $x=m_p(A)+1$ satisfies $B(\ze_{p^x})\neq0=B(\ze_{p^xq})$ by Lemma \ref{rootpatterns} and Proposition \ref{maxpqroots}, therefore $U_B^N(p)\neq\vn$. 
  By \eqref{Bpqroots} we have $B(\ze_p)=0$, or equivalently, $\abs{B_{i\bmod p}}=\frac{1}{p}\abs{B}$ for all $i$. At every step, we will estimate
  \[\min\set{i:v_p(\abs{B_{i\bmod p^x}})},\]
  or in other words, how much the power of $p$ drops from $\abs{B}$ down to one of the subsets $\abs{B_{i\bmod p^x}}$. If $x\in S_B^N(p)$, then each $B_{i\bmod p^{x-1}}$ is equidistributed $\bmod p^x$, therefore
  \[v_p(\abs{B_{i\bmod p^{x-1}}})-1=v_p(\abs{B_{i\bmod p^x}}),\]
  for all $i$. If $x\notin S_B^N(p)$, we have in general
  \[v_p(\abs{B_{i\bmod p^{x-1}}})\geq \min\set{v_p(\abs{B_{i+kp^{x-1}\bmod p^x}}):0\leq k\leq p-1}.\]
  Next, consider the minimum element of $U_B^N(p)$, say $x$, and suppose that $B_{i\bmod p^{x-1}}$ is not absorbed $\bmod p^x$; such an $i$ exists due to hypothesis and
  \eqref{absfreespec}, since $B$ is absorption-free. Each nonempty class $B_{i+kp^{x-1}\bmod p^x}$ would then be absorbed $\bmod p^xq$, 
  otherwise $\frac{N}{p^xq}B\cap(j+\frac{N}{pq}\ZZ_N)$
  would neither be supported on a $p$-cycle, nor on a $q$-cycle, for some $j$. For the same reason, each element in $B_{i+kp^{x-1}\bmod p^x}$ must have the same residue $\bmod q$, for all $k$, 
  showing that $B_{i\bmod p^{x-1}}$ is absorbed $\bmod p^{x-1}q$; furthermore, since $B(\ze_{p^xq})=0$, $B_{i\bmod p^{x-1}}$ and $B_{i\bmod p^{x-1}q}$ are equidistributed $\bmod p^x$ 
  and $\bmod p^xq$, respectively. Therefore,
  \[v_p(\abs{B_{i\bmod p^{x-1}}})-1=v_p(\abs{B_{i\bmod p^x}}).\]
  
  
  Next, let $x$ be the minimum element not belonging to $S_B^N(p)$, that is
  \[B(\ze_p)=\dotsb=B(\ze_{p^{x-1}})=0\neq B(\ze_{p^x}).\]
  Suppose that $B_{i\bmod p^{x-1}}$ is not absorbed $\bmod p^x$; such an $i$ exists due to hypothesis and \eqref{absfreespec}. Let then $B_{i+kp^{x-1}\bmod p^x}$ and
  $B_{i+k'p^{x-1}\bmod p^x}$ be nonempty with $p\nmid k-k'$. If there are $b\in B_{i+kp^{x-1}\bmod p^x}$ and $b'\in B_{i+k'p^{x-1}\bmod p^x}$ with $q\nmid b-b'$, then $b-b'\in p^{x-1}\ZZ_N^{\star}$,
  yielding $A(\ze_N^{p^{x-1}})=0$ by Theorem \ref{mainref}\ref{i}, and then $B(\ze_{p^x})=0$ by Lemma \ref{rootpatterns}, contradicting the definition of $x$. Thus, we must have $q\mid b-b'$ for 
  every such selection of $b,b'$, implying eventually that every element of $B_{i\bmod p^{x-1}}$ has the same residue $\bmod q$.
  
  For every $y\geq x$ we choose an integer $i_y\equiv i\bmod p^{x-1}$ as follows (we denote $i_j=i$ for $1\leq j\leq x-1$): 
  \begin{itemize}
   \item if $y\notin S_B^N(p)\cup U_B^N(p)$, choose $i_y$ satisfying 
   \[v_p(\abs{B_{i_{y-1}\bmod p^{y-1}}})\geq v_p(\abs{B_{i_y\bmod p^y}}).\]
   \item if $y\in S_B^N(p)\cup U_B^N(p)$, then $i_y=i_{y-1}$.
  \end{itemize}
  We have $\abs{B_{i\bmod p^{x-1}}}=\frac{1}{p^{x-1}}\abs{B}$. If $y\in S_B^N(p)$, then $B_{i_{y-1}\bmod p^{y-1}}$ is equidistributed $\bmod p^y$, so
  \begin{equation}\label{pdrop}
  \abs{B_{i_y\bmod p^y}}=\frac{1}{p}\abs{B_{i_{y-1}\bmod p^{y-1}}}.
  \end{equation}
  If $y\in U_B^N(p)$, we have $B(\ze_{p^yq})=0$. Therefore, the multiset $\frac{N}{p^yq}\cdot B$ is a disjoint union of $p$- and $q$-cycles by Theorem \ref{vansumspos}. 
  Every such cycle, consists of elements $p^yqb$ for $b\in B$
  that have the same residue $\bmod p^{y-1}$; indeed, if $\frac{N}{p^yq}(b-b')\in\frac{N}{p}\ZZ_N$, then $b-b'\equiv kp^{y-1}q\bmod p^yq$ for some $k$, and if
  $\frac{N}{p^yq}(b-b')\in\frac{N}{q}\ZZ_N$, then $b-b'\equiv kp^y\bmod p^yq$ for some $k$, proving our claim. This shows that $\frac{N}{p^yq}\cdot B_{i_{y-1}\bmod p^{y-1}}$ is a disjoint union
  of $p$- and $q$-cycles; however, a $q$-cycle in $\frac{N}{p^yq}\cdot B_{i_{y-1}\bmod p^{y-1}}\ssq\frac{N}{p^yq}B$ consists of elements having all possible residues $\bmod q$, a contradiction, 
  since every element of $B_{i\bmod p^{x-1}}$ has the same residue $\bmod q$. This shows
  that $\frac{N}{p^yq}\cdot B_{i_{y-1}\bmod p^{y-1}}$ is a union of $p$-cycles, yielding the equidistribution of $B_{i_{y-1}\bmod p^{y-1}}$ modulo $p^y$. Thus, \eqref{pdrop} holds in this case
  as well.
  
  Summarizing, we obtain the following: if $y\in S_B^N(p)\cup U_B^N(p)$, then \eqref{pdrop} is valid, otherwise 
  \[v_p(\abs{B_{i_{y-1}\bmod p^{y-1}}})\geq v_p(\abs{B_{i_y\bmod p^y}})\]
  holds. This obviously yields that
  \[0\leq v_p(\abs{B_{i_m\bmod p^m}})\leq v_p(\abs{B})-\abs{S_B^N(p)}-\abs{U_B^N(p)},\]
  proving the first part.
  
  For the second part of the Lemma, we proceed by contradiction. If there is some $j$ and $x\in U_B^N(p)$ with $x\leq m_p(A)+1$ 
  such that $\frac{N}{p^xq}B\cap(j+\frac{N}{pq}\ZZ_N)$ is supported neither on a $p$- nor on a $q$-cycle, then
  by Proposition \ref{preroothunt} we get that 
  \[\bra{\frac{N}{p^xq}B-\frac{N}{p^xq}B}\cap\frac{N}{pq}\ZZ_N^{\star}\neq\vn.\]
  This shows the existence of $b,b'\in B$ such that 
  \[\frac{N}{p^xq}(b-b')\in\frac{N}{pq}\ZZ_N^{\star},\]
  or equivalently,
  \[\ord(b-b')=pq\frac{N}{p^xq}=\frac{N}{p^{x-1}},\]
  hence by Theorem \ref{mainref}\ref{i} we obtain 
  \[A(\ze_{N/p^{x-1}})=A(\ze_N^{p^{x-1}})=0,\]
  contradicting the definition of $x$. This completes the proof.
 \end{proof}

 \begin{defn}
  Let $A\ssq\ZZ_N$. We define
  \begin{enumerate}[{\bf(wT1)},leftmargin=*]
   \item $p^{\abs{S_A^N(p)}}\pdiv \abs{A}$, for some $p\mid \abs{A}$. \label{wt1}
  \end{enumerate}
 \end{defn}
 
Obviously, when \ref{t1} holds for $A$, then \ref{wt1} holds for \emph{every} $p\mid A$.
 An immediate consequence of this definition and Lemma \ref{t1fail} is the following.

 \begin{cor}\label{wt1tile}
  Let $N\in\Spq'$, $A\in\Fmax(N)$. Then, no spectrum $B$ of $A$ satisfies \ref{wt1}. In particular, every spectrum $B$ of $A$ satisfies $B\in\Fmax(N)$.
 \end{cor}
 
 \begin{proof}
  Obvious by Lemma \ref{t1fail}, since the sets $U_B^N(p)$ and $U_B^N(q)$ are nonempty. 
 \end{proof}

 \begin{cor}\label{Bpq}
  Let $N\in\Spq'$, $A\in\Fmax(N)$. Then no spectrum $B$ of $A$ is a disjoint union either of $p$- or $q$-cycles exclusively. 
 \end{cor}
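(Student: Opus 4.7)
The plan is to argue by contradiction, combining the symmetry of the spectral pair with Proposition \ref{Apq} and Corollary \ref{wt1tile}. Assume that some spectrum $B$ of $A$ is a disjoint union of $p$-cycles exclusively; the case of $q$-cycles is identical. We may write
\[B(X) \equiv R(X)\,\Phi_p(X^{N/p}) \bmod (X^N-1),\]
where $R(X)\in\ZZ_{\geq0}[X]$ has $\set{0,1}$-coefficients and corresponds to a set $R\ssq\ZZ_{N/p}$.

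Since the spectral-pair condition is symmetric in its two members, $(B,A)$ is also a spectral pair in $\ZZ_N$. I would then repeat the construction from the proof of Proposition \ref{Apq} verbatim, with the roles of $A$ and $B$ interchanged: every nonempty fiber $A_{j\bmod p}$ serves as a spectrum of $R$ inside $\ZZ_{N/p}$, so $R$ is a spectral subset of $\ZZ_{N/p}$. Since $N\in\Spq'$, Fuglede's conjecture holds in the subgroup of order $N/p$, so $R$ tiles $\ZZ_{N/p}$; Theorem \ref{LCM}\ref{III}, applicable because $N/p$ has at most two distinct prime factors, then shows that $R$ satisfies \ref{t1} and \ref{t2} in $\ZZ_{N/p}$. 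A direct computation using $B(X)=R(X)\Phi_p(X^{N/p})$ lifts these to \ref{t1} and \ref{t2} for $B$ in $\ZZ_N$: the new prime-power root $\ze_{p^m}$ is supplied automatically by the factor $\Phi_p(X^{N/p})$, contributing an extra $p$ to both $|B|$ and $\abs{S_B^N(p)}$.

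Next, I would extract \ref{wt1} from \ref{t1}. Every $s\in S_B^N$ is a prime power with $\Phi_{r^k}(1)=r$, so \ref{t1} reads
\[|B| = p^{\abs{S_B^N(p)}}\,q^{\abs{S_B^N(q)}},\]
and hence $p^{\abs{S_B^N(p)}}\pdiv |B|$, i.e.\ \ref{wt1} holds. This directly contradicts Corollary \ref{wt1tile}, which forbids any spectrum of $A$ from satisfying \ref{wt1}.

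The only delicate point is the transferability of the Proposition \ref{Apq} argument under the swap $A\leftrightarrow B$; this is immediate since the normalized matrix $(\ze_N^{ab})_{a\in A,b\in B}/\sqrt{\abs{A}}$ is unitary iff its transpose is, so both the orthogonality relation \eqref{orth} and the characterization $B-B\ssq\set{0}\cup Z(A)$ from Theorem \ref{mainref}\ref{i} hold with $A$ and $B$ interchanged. Thus every step of that argument survives verbatim, and no further obstacle appears.
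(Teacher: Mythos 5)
Your proof is correct and takes essentially the same route as the paper: the paper's own proof of Corollary \ref{Bpq} likewise applies the argument of Proposition \ref{Apq} with the roles of $A$ and $B$ swapped to conclude that $B$ satisfies \ref{t1} and \ref{t2}, and then contradicts Corollary \ref{wt1tile} since \ref{t1} implies \ref{wt1}. Your write-up merely spells out what the paper leaves implicit in citing Proposition \ref{Apq} --- the symmetry of the spectral pair and the descent of $R$ to a spectral, hence tiling, subset of $\ZZ_{N/p}$ --- which is a welcome clarification but not a different argument.
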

 
 \begin{proof}
 If $B$ was a union of (say) $p$-cycles only, then by Proposition \ref{Apq}, $B$ satisfies \ref{t1} and \ref{t2}. On the other hand, Corollary \ref{wt1tile} gives us that \ref{wt1} fails
 for $B$, a contradiction, as \ref{t1} implies \ref{wt1}.
 \end{proof}
 
 This Corollary complements Proposition \ref{Apq}. We summarize below all the reductions made so far:
 
 \begin{thm}\label{summary}
  Let $N\in\Spq'$, $A\in\Fmax(N)$. Then the following hold:
  \begin{enumerate}
  \item $A$ and any spectrum $B$ are primitive.
   \item $A$, as well as any spectrum $B$, can be expressed as disjoint unions of $p$- and $q$-cycles; not exclusively of $p$- or $q$-cycles (i.e. none of the polynomials $P,Q,R,S$ in both \eqref{ax}
   and \eqref{bx} can be identically zero). \label{summary2}
   \item If $N=p^mq^n$, it holds
   \begin{equation}\label{roots1}
   \begin{split}
    A(\ze_N)=A(\ze_p)=A(\ze_q)=A(\ze_{p^m})=A(\ze_{q^n})=& \\
    =B(\ze_N)=B(\ze_p)=B(\ze_q)=B(\ze_{p^m})=B(\ze_{q^n})=& 0
    \end{split}
   \end{equation}
  for any spectrum $B$ of $A$. This also implies that both $A$ and $B$ are equidistributed both $\bmod p$ and $\bmod q$.
  \item $A$ as well as any spectrum $B$ are absorption-free. 
	\item Every spectrum $B$ of $A$ satisfies $B\in\Fmax(N)$.
  \end{enumerate}
 \end{thm}


 We have established enough tools to extend the results of \cite{KMSV20}, related to Fuglede's conjecture.
 
 \begin{cor}\label{pmq2}
  Let $N=p^mq^n$, $n\leq3$, and $(A,B)$ be a spectral pair in $\ZZ_N$. Then $A$ tiles $\ZZ_N$.
 \end{cor}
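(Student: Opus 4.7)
The plan is to argue by contradiction, passing to a minimal element of $\Spq$. If $N$ admits a spectral non-tile, then some divisor $N_1=p^{m_1}q^{n_1}\mid N$ lies in $\Spqmin$, and by \eqref{absfreespec} we may fix $A_1\in\F'(N_1)$ together with a spectrum $B_1$. Fuglede's conjecture is already known for $\ZZ_{p^{m_1}}$ and for $\ZZ_{p^{m_1}q}$ (Corollary \ref{mkshort}), so necessarily $n_1\in\set{2,3}$. By Theorem \ref{summary}, the pair $(A_1,B_1)$ is absorption-free and primitive and satisfies $B_1(\ze_q)=B_1(\ze_{q^{n_1}})=0$; hence $\set{1,n_1}\ssq S_{B_1}^{N_1}(q)\ssq[1,n_1]$, while $\abs{A_1}=\abs{B_1}$. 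The strategy is to count the power of $q$ dividing $\abs{B_1}$ from the root pattern of $B_1$, and to contradict the bound $q^{n_1}\nmid\abs{A_1}$ supplied by Proposition \ref{maxpower}.

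For $n_1=2$ there is no slack: $S_{B_1}^{N_1}(q)=\set{1,2}$, and the unconditional divisibility $q^{\abs{S_{B_1}^{N_1}(q)}}\mid\abs{B_1}$ recorded in Section 9 already produces $q^2\mid\abs{A_1}$, contradicting $q^{n_1}=q^2\nmid\abs{A_1}$.

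For $n_1=3$ one has to work a bit harder. First, $B_1(\ze_{q^2})=0$ would force $\abs{S_{B_1}^{N_1}(q)}=3$ and $q^3\mid\abs{A_1}$, again excluded by Proposition \ref{maxpower}; so $B_1(\ze_{q^2})\neq 0$ and $S_{B_1}^{N_1}(q)=\set{1,3}$. Next I would pin down $n_0$ (in the sense of Definition \ref{m0n0}) for $A_1$. Plugging $x=n_0$ into \eqref{ABqsymalt} and using $B_1(\ze_{q^3})=0$ would otherwise give $A_1(\ze_{N_1}^{q^2})=0$, contradicting the maximality of $n_0$; this forces $n_0=1$. Then Proposition \ref{maxpqroots} yields the additional root $B_1(\ze_{pq^{n_0+1}})=B_1(\ze_{pq^2})=0$.

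With these roots in hand I would unpack $U_{B_1}^{N_1}(q)=\set{y:B_1(\ze_{q^y})\neq 0=B_1(\ze_{pq^y})}$. The index $y=0$ is included because $B_1(1)=\abs{B_1}\neq 0$ while $B_1(\ze_p)=0$ by Theorem \ref{summary}; the index $y=2$ is included because $B_1(\ze_{q^2})\neq 0=B_1(\ze_{pq^2})$. Hence $\abs{U_{B_1}^{N_1}(q)}\geq 2$, and Lemma \ref{t1fail} delivers $q^{\abs{S_{B_1}^{N_1}(q)}+\abs{U_{B_1}^{N_1}(q)}}=q^{4}\mid\abs{B_1}$, contradicting $q^{n_1}=q^3\nmid\abs{A_1}$ one last time. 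The main obstacle is precisely this $n_1=3$ case: it requires the full chain (root-pattern symmetry of Lemma \ref{rootpatterns}, the extra root of $B_1$ from Proposition \ref{maxpqroots}, and the absorption-free doubling in Lemma \ref{t1fail}) to promote the easy bound $q^2\mid\abs{B_1}$ to the fatal $q^4\mid\abs{B_1}$, all anchored on the absorption-freeness guaranteed by \eqref{absfreespec}.
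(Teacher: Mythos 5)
Your proposal is correct and follows essentially the same route as the paper: reduce to a minimal element of $\Spq$ and an absorption-free pair via \eqref{absfreespec}, use Theorem \ref{summary} to get the roots of $B$ at $q$ and $q^{n}$, dispose of $n=2$ by the divisibility $q^{\abs{S_B^N(q)}}\mid\abs{B}$ plus Proposition \ref{maxpower}, and for $n=3$ feed the nonemptiness of $U_B^N(q)$ (your explicit computation $n_0=1$, $n_0+1\in U_B^N(q)$ is exactly how the paper justifies this in general, via Lemma \ref{rootpatterns} and Proposition \ref{maxpqroots}) into Lemma \ref{t1fail} and Proposition \ref{maxpower}. The one quibble is your inclusion of $y=0$ in $U_{B}^{N}(q)$: the counting in the proof of Lemma \ref{t1fail} only accounts for indices in $[1,n]$ (each element of $S\cup U$ must correspond to a $q$-adic digit position where the cardinality drops), so the claimed $q^{4}\mid\abs{B}$ is not justified as stated --- but this is harmless, since $q^{3}\mid\abs{B}$ already follows from $\abs{S_{B}^{N}(q)}\geq 2$ and $\abs{U_{B}^{N}(q)\cap[1,n]}\geq 1$ and suffices for the contradiction.
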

 
 \begin{proof}
 Suppose on the contrary that there is such a $N$ with $\F(N)\neq\vn$, $A\in\Fmax(N)$ and $B$ a spectrum. Corollary \ref{mkshort} implies that $n\geq2$.
 By Theorem \ref{summary}, we obtain $A(\ze_q)=A(\ze_{q^n})=0$, so if $n=2$
 we have $q^n\mid\abs{A}$, so by Proposition \ref{maxpower} we get that $A$ tiles, contradiction. If $n=3$, we have $B(\ze_q)=B(\ze_{q^3})=0$ again by Theorem \ref{summary}, so that
 $\abs{S_B^N(q)}\geq2$. Since $U_B^N(q)$ is nonempty, by Lemma \ref{t1fail} we obtain $q^3\mid\abs{B}$, and then Proposition \ref{maxpower} gives us again that $A$ tiles $\ZZ_N$,
 a contradiction. Thus, $\Spq$ cannot have elements of the form $p^mq^n$ with $n\leq3$, proving the desired result.
 \end{proof}
 
 
 \begin{prop}\label{basict2}
  Let $N=p^mq^n\in\Spq'$, $A\in\Fmax(N)$, and $B$ any spectrum of $A$. Then,
  \[A(\ze_{p^mq})=A(\ze_{pq^n})=B(\ze_{p^mq})=B(\ze_{pq^n})=A(\ze_{pq})=B(\ze_{pq})=0.\]
 \end{prop}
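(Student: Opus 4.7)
The plan is to split the six cyclotomic divisibilities into two groups: the four involving $\ze_{p^m q}$ and $\ze_{pq^n}$ follow cleanly from Lemma \ref{rootpatterns}, while the two at $\ze_{pq}$ require a structural argument via Proposition \ref{preroothunt}.

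For the first group I would prove $A(\ze_{p^m q})=0$ by rewriting it as $A(\ze_N^{q^{n-1}})=0$, that is $n-1\in R_A^N(q)$. Two cases: if $n-1>n_0$, the definition of $n_0$ as the maximal exponent $y$ with $A(\ze_N^{q^y})\neq 0$ immediately gives $A(\ze_N^{q^{n-1}})=0$; if $n-1\leq n_0$, Lemma \ref{rootpatterns} \eqref{ABqsymalt} with $x=n-1$ yields the equivalence $A(\ze_N^{q^{n-1}})=0\Leftrightarrow B(\ze_{q^n})=0$, and the right hand side holds by Theorem \ref{summary}. Mirroring with $p\leftrightarrow q$ gives $A(\ze_{pq^n})=0$; mirroring with $A\leftrightarrow B$, using that $(B,A)$ is also a spectral pair and the symmetry of the Theorem \ref{summary} vanishings, produces $B(\ze_{p^mq})=B(\ze_{pq^n})=0$.

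For $A(\ze_{pq})=0$ I would invoke Proposition \ref{preroothunt}: if some fiber $B_{j\bmod p^{m-1}q^{n-1}}$ is not contained in a single $p$-cycle or $q$-cycle of its $pq$-cycle $j+(N/pq)\ZZ_N$, then $(B-B)\cap (N/pq)\ZZ_N^{\star}\neq \vn$, and Theorem \ref{mainref}\ref{i} forces $A(\ze_{pq})=0$. The mirror argument on the $A$-side delivers $B(\ze_{pq})=0$. The task is therefore to rule out the rigid scenario in which every fiber $B_{j\bmod p^{m-1}q^{n-1}}$ (respectively $A_{j\bmod p^{m-1}q^{n-1}}$) is trapped in a single $p$-cycle or $q$-cycle of its $pq$-cycle.

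The main obstacle is this rigidity argument. The plan is to combine the structural conclusions of Theorem \ref{summary}: absorption-freeness of $B$ at $p^{m-1}$ forces some $B_{i\bmod p^{m-1}}$ to hit at least two cosets modulo $p^m$, and equidistribution modulo $p^m$ (from $B(\ze_{p^m})=0$) then forces it to hit all $p$ such cosets with equal multiplicity; analogous statements hold at $q^{n-1}$. If every fiber $B_{j\bmod p^{m-1}q^{n-1}}$ were confined to a single $p$- or $q$-cycle, this cross-spread together with the equidistributions modulo $p$ and $q$ would force $B$ to decompose as a disjoint union of $p$-cycles or $q$-cycles exclusively, contradicting Corollary \ref{Bpq}. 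A careful bookkeeping of which fibers are in the "$p$-type" versus "$q$-type" regime, combined with the pigeonhole on $pq$-cycles produced by absorption-freeness at both $p^{m-1}$ and $q^{n-1}$, yields the required general-position pair. I expect the bulk of the proof to lie in making this pigeonhole precise, since a direct invocation of Lemma \ref{roothunt} at $d=1$ is blocked by $A(\ze_N)=B(\ze_N)=0$ (Corollary \ref{Nroot}), so the structural route via Proposition \ref{preroothunt} appears unavoidable.
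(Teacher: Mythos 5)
Your derivation of the four vanishings at $\ze_{p^mq}$ and $\ze_{pq^n}$ is correct and matches the paper's argument, parameterized via $n_0$ rather than the paper's $m_0$ but using the same tools: Lemma~\ref{rootpatterns}\,\eqref{ABqsymalt} and Theorem~\ref{summary}. (One caution: Theorem~\ref{summary}, Corollary~\ref{Bpq}, and the claim that $A$ and $B$ may be interchanged all require $A\in\F'(N)$ rather than merely $A\in\F(N)$; the paper uses this implicitly, and your proof inherits the same dependence.)

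The gap is precisely in what you flag as the ``main obstacle,'' and your proposed resolution does not close it. Your reduction via Proposition~\ref{preroothunt} to ruling out the scenario in which $B$ meets each $pq$-cycle in at most one cycle is correct, as is the observation that Lemma~\ref{roothunt} is blocked at $d=1$. But the implication you then assert --- that this confinement, combined with $B(\ze_p)=B(\ze_q)=B(\ze_{p^m})=B(\ze_{q^n})=0$, forces $B$ to be a union of $p$-cycles exclusively or of $q$-cycles exclusively --- is not established and does not follow from the ingredients you name. Since $B(\ze_N)=0$, $B$ is already a disjoint union of $p$- and $q$-cycles; confinement only says each $pq$-cycle carries at most one of them, and nothing in the four vanishings rules out some $pq$-cycles carrying a $p$-cycle and others a $q$-cycle. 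Indeed, writing $a$ and $b$ for the number of $p$- and $q$-cycles, $B(\ze_{q^n})=0$ and $B(\ze_{p^m})=0$ only give $q\mid a$ and $p\mid b$, while $B(\ze_p)=0$ and $B(\ze_q)=0$ give linear congruences on how cycles distribute over residues $\bmod p$ and $\bmod q$; these constraints all admit mixed solutions with $a,b>0$. The ``cross-spread'' you invoke (equidistribution of every $B_{i\bmod p^{m-1}}$ modulo $p^m$) is automatic once $B(\ze_{p^m})=0$ and adds no leverage. The paper's actual proof works at a different scale: it fixes the minimal $x\in U_B^N(p)$, applies Lemma~\ref{abseqd} at $d=p^{x-1}$, and then performs a full descent through the $p$-adic (and symmetrically the $q$-adic) levels, exchanging information between $A$ and $B$ via Lemma~\ref{rootpatterns} at each step, ultimately proving $\frac{\abs{B}}{pq}\mid\abs{B_{i\bmod pq}}$ for every $i$ before closing with a cardinality pigeonhole. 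A one-shot pigeonhole at the single scale $N/pq$, as you sketch, does not have access to that hierarchy and cannot rule out the mixed configuration.
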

 
 \begin{proof}
  By Theorem \ref{summary}, we also have $B\in\Fmax(N)$, so we may use Lemma \ref{rootpatterns} with the roles of $A$ and $B$ reversed. If 
  $A(\ze_{pq^n})=A(\ze_N^{p^{m-1}})\neq0=A(\ze_N^{p^m})=A(\ze_{q^n})$, we would have $m_p(A)=m-1$, 
	so by virtue of Lemma \ref{rootpatterns} we would have
  $B(\ze_{p^m})\neq0$, contradicting equations \ref{ABpsym}. Reversing the roles of $p$, $q$, and $A$, $B$, we obtain with the same reasoning
  \[A(\ze_{p^mq})=A(\ze_{pq^n})=B(\ze_{p^mq})=B(\ze_{pq^n})=0.\]
  Next, consider the minimal element of $U_B^N(p)$, say $x$, so that $B(\ze_{p^x})\neq0=B(\ze_{p^xq})$. By Proposition \ref{maxpqroots} we have
  $x\leq m_p(A)+1$, hence by Lemma \ref{rootpatterns} we get $A(\ze_N^{p^{x-1}})\neq0$. Applying Lemma \ref{abseqd} for $d=p^{x-1}$, we get that each $B_{i\bmod p^{x-1}}$ is either absorbed
  or equidistributed $\bmod p^x$. Since $B(\ze_{p^x})\neq0$, there is some $i$ such that $B_{i\bmod p^{x-1}}$ is absorbed $\bmod p^x$; without loss of generality we put $B_{i\bmod p^{x-1}}=
  B_{i\bmod p^x}$. This also shows that $B_{i\bmod p^xq}$ is supported on a $q$-cycle; in fact, $B_{i\bmod p^x}$ must be equidistributed $\bmod p^xq$, hence
  \begin{equation}\label{edmodpxq}
  \abs{B_{i+kp^x\bmod p^xq}}=\frac{1}{q}\abs{B_{i\bmod p^x}}, \;\;\;\; 0\leq k\leq q-1. 
  \end{equation}
  For $y\leq x-1$ we then claim the following: $B_{i\bmod p^y}$
  is either absorbed or equidistributed $\bmod p^{y+1}$. Indeed, if $y+1\in S_B^N(p)$, then $B(\ze_{p^{y+1}})=0$ and $B_{i\bmod p^y}$ is equidistributed $\bmod p^{y+1}$. If $B(\ze_{p^{y+1}})\neq0$, then
  $A(\ze_N^{p^y})\neq0$ by Lemma \ref{rootpatterns}. If $B_{i\bmod p^y}$ is not absorbed $\bmod p^{y+1}$, then there are $k\not \equiv k'\bmod p$ such that 
  \[B_{i+kp^y\bmod p^{y+1}}\neq\vn, B_{i+k'p^y\bmod p^{y+1}}\neq\vn.\]
  This means that the multiset $\frac{N}{p^{y+1}q}\cdot B_{i\bmod p^y}$ (which is supported in a $pq$-cycle) intersects nontrivially at least two $q$-cycles, namely 
  \[\frac{iN}{p^{y+1}q}+k\frac{N}{pq}+\frac{N}{q}\ZZ_N, \;\;\;\; \frac{iN}{p^{y+1}q}+k'\frac{N}{pq}+\frac{N}{q}\ZZ_N.\]
  By the second part of Lemma \ref{t1fail}, $\frac{N}{p^{y+1}q}\cdot B_{i\bmod p^y}$ must be supported on a $p$-cycle, in particular,
  \[\frac{iN}{p^{y+1}q}+\ell\frac{N}{q}+\frac{N}{p}\ZZ_N,\]
  for some $\ell$. But the latter implies that $B_{i\bmod p^y}$ is absorbed $\bmod p^yq$:
  \[B_{i\bmod p^y}=B_{i+\ell'p^y\bmod p^yq},\]
  where
  \[\ell'\equiv p\ell\bmod q.\]
  But this leads to a contradiction, since \eqref{edmodpxq} implies that $B_{i+kp^x\bmod p^xq}\neq \vn$ for every $k$, while
  \[B_{i+kp^x\bmod p^xq}\ssq B_{i+(kp^{x-y})p^y\bmod p^yq}=\vn\]
  when $kp^{x-y}\not\equiv\ell'\bmod q$, contradiction.
  
  Therefore, if $B_{i\bmod p^{x-1}}$ is absorbed $\bmod p^x$, then $B_{i\bmod p^y}$ is either absorbed of equidistributed $\bmod p^{y+1}$ for every $y<x$; this implies
  \[\abs{B_{i+kp^x\bmod p^xq}}=\frac{1}{q}\abs{B_{i\bmod p^x}}=\frac{\abs{B}}{p^tq},\]
  for some $t\in\NN$.
  With the same arguments as above, if $B_{i\bmod p^{x-1}}$ is equidistributed $\bmod p^x$ 
	we deduce that if $y\leq x-1$ is the smallest exponent such that $B_{i\bmod p^{y}}$ is neither absorbed nor equidistributed $\bmod p^{y+1}$
  (assuming there exists one), then $B_{i\bmod p^y}$ is absorbed $\bmod p^yq$, which means that every element in $B_{i\bmod p^y}$ has the same residue $\bmod q$,
	again by the second part of Lemma \ref{t1fail}. In this case, there is some $k$ such that
  \[\abs{B_{i+kp^y\bmod p^yq}}=\abs{B_{i\bmod p^y}}=\frac{\abs{B}}{p^s},\]
  for some $s\in\NN$. We write
  \[B_{i\bmod pq}=\bigsqcup_{k=1}^{p^{x-1}}B_{i+kpq\bmod p^xq},\]
  and we define $y(k)\leq x-1$ to be the smallest exponent such that $B_{i+kpq\bmod p^{y(k)}}$ is neither absorbed nor equidistributed $\bmod p^{y(k)+1}$; if such an exponent does not exist, define
  $y(k)=x-1$. We define the relation $k\sim k'$ to denote $y(k)=y(k')$ and $k\equiv k'\bmod p^{y(k)-1}$; it is not hard to see that this is an equivalence relation and 
  \[B_{i\bmod pq}=\bigsqcup_{k\in K}B_{i+kpq\bmod p^{y(k)-1}q}.\]
  As we have seen though, the cardinality of each set in the union above is divisible by $\frac{\abs{B}}{p^rq}$, for some $r\in\NN$. Reversing the roles of $p$ and $q$, we obtain that
  \begin{equation}\label{divisibility99}
  \frac{\abs{B}}{pq}\mid \abs{B_{i\bmod pq}},
  \end{equation}
  for every $i$. If $B$ were not equidistributed $\bmod pq$, there would exist some $i$ such that
  \[\abs{B_{i\bmod pq}}\geq \frac{2\abs{B}}{pq}.\]
  By \eqref{divisibility99} and the trivial partition
  \[B_{i\bmod q}=\bigsqcup_{j=1}^pB_{i+jq\bmod pq},\]
  we obtain the existence of a $j$ such that $B_{i+jq\bmod pq}=\vn$. Similarly, there is some $\ell$ such that $B_{i+\ell p\bmod pq}=\vn$. If $B_{i+kpq\bmod p^{x-1}}$ is absorbed $\bmod p^x$, then it is
  equidistributed $\bmod p^{x-1}q$, as we've shown above. But then, $B_{i+kpq+up^{x-1}\bmod p^{x-1}q}$ is nonempty for every $u$, however,
  \[B_{i+kpq+up^{x-1}\bmod p^{x-1}q}\ssq B_{i+\ell p\bmod pq}=\vn\]
  for $up^{x-1}\equiv \ell p\bmod pq$, contradiction. This shows that $B_{i+kpq\bmod p^{x-1}}$ is equidistributed $\bmod p^x$ for every $k$, showing that $\frac{\abs{B}}{p^t}$ divides $\abs{B_{i\bmod pq}}$.
  Similarly, we obtain that $\frac{\abs{B}}{q^s}$ divides $\abs{B_{i\bmod pq}}$, showing thus $\abs{B}\mid \abs{B_{i\bmod pq}}$, contradiction. Thus, $B$ is equidistributed $\bmod pq$, and the same holds for
  $A$, by reversing the roles of $A$ and $B$. This eventually shows
  \[A(\ze_{pq})=B(\ze_{pq})=0,\]
  as desired.
 \end{proof}

 \begin{rem}
  The above verifies property \ref{t2} for the pairs of prime powers $(p^m,q)$, $(p,q^n)$, and $(p,q)$.
 \end{rem}

 \bigskip\bigskip
 \section{Root deficit}
 \bigskip\bigskip

 We continue to assume that $N\in\Spq'$ and $(A,B)$ is a spectral pair with $A\in\Fmax(N)$, so that $B\in\Fmax(N)$ as well.
 
 \begin{defn}
  Let $A\ssq\ZZ_N$ satisfy $A(\ze_q)=A(\ze_p)=\dotsb=A(\ze_{p^k})=0\neq A(\ze_{p^{k+1}})$. We define the following property:
  \begin{enumerate}[{\bf(wT2)},leftmargin=*]
   \item\label{wt2} It holds $A(\ze_{pq})=\dotsb=A(\ze_{p^kq})=0$. 
  \end{enumerate}
 \end{defn}
 
 It should be emphasized that when \ref{wt2} holds, then $A$ is equidistributed $\bmod p^kq$ (and in particular, every $A_{j\bmod p^k}$ is equidistributed $\bmod p^kq$). Indeed, as the hypothesis of the Definition
 and \ref{wt2} imply that $\Phi_d(X)\mid A(X)$ for every $d\mid p^kq$, $d\neq1$. Therefore,
 \[X^{p^kq-1}+\dotsb+1\mid A(X),\]
 or equivalently, $A(X)=G(X)(X^{p^kq-1}+\dotsb+1)$ for some $G(X)\in\ZZ[X]$. When we reduce $\bmod(X^{p^kq}-1)$ we obtain
 \begin{align*}
  A(X) &\equiv G(X)(X^{p^kq-1}+\dotsb+1)\bmod(X^{p^kq}-1)\\
  &\equiv G(1)(X^{p^kq-1}+\dotsb+1)\bmod(X^{p^kq}-1),
 \end{align*}
 which clearly shows that
  \begin{equation}\label{wt2holds}
  \abs{A_{j\bmod p^k}}=\frac{\abs{A}}{p^k}, \;\;\;\; \abs{A_{j\bmod p^kq}}=\frac{\abs{A}}{p^kq}, \;\;\;\; \forall j.
  \end{equation}

 \begin{prop}\label{absfreefails}
  Let $N\in\Spq'$, $A\in\Fmax(N)$, satisfying 
  \[A(\ze_q)=A(\ze_p)=\dotsb=A(\ze_{p^k})=0\neq A(\ze_{p^{k+1}}).\]
  Then, there is some $j$ such that $A_{j\bmod p^k}$ is absorbed $\bmod p^kq$. In particular, \ref{wt2} fails for $A$.
 \end{prop}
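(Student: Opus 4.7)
The plan is to argue the first conclusion by contradiction: assume no class $A_{j\bmod p^k}$ is absorbed $\bmod p^kq$ and exhibit a pair $a,a'\in A$ with $\ord(a-a')=N/p^k$, which via the spectrum will force $A(\ze_{p^{k+1}})=0$. The absorption-freeness of $A\in\F'(N)$ at $p^k$ gives some $j_0$ such that $A_{j_0\bmod p^k}$ meets at least two residues $\bmod p^{k+1}$, while the contradiction hypothesis says the same class meets at least two residues $\bmod q$. A short pigeonhole argument then finds $a,a'\in A_{j_0\bmod p^k}$ differing \emph{simultaneously} $\bmod p^{k+1}$ and $\bmod q$: otherwise, picking $a_1,a_2$ in two different $q$-residues would force $a_1\equiv a_2\bmod p^{k+1}$, and any further $a$ would agree on $p^{k+1}$ with either $a_1$ or $a_2$ (whichever differs from it mod $q$), collapsing $A_{j_0\bmod p^k}$ to a single $p^{k+1}$-residue and contradicting absorption-freeness. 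For such $a,a'$ we have $\gcd(a-a',N)=p^k$, hence $\ord(a-a')=N/p^k$; Corollary \ref{specord} applied to the spectral pair $(B,A)$ yields $B(\ze_N^{p^k})=0$.

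To convert this into the needed contradiction I would apply Lemma \ref{rootpatterns} to the pair $(B,A)$, invoking Theorem \ref{summary} to ensure $B\in\F'(N)$. This requires $k\leq m_0(B)$, and I would check this as follows: first $m_0(B)\geq0$, because otherwise $B(\ze_N^{p^j})=0$ for all $j\in[0,m]$ and Lemma \ref{ma} (with the roles of $p$ and $q$ swapped) would force $B$ to be a disjoint union of $q$-cycles, contradicting Corollary \ref{Bpq}. Next, applying Lemma \ref{rootpatterns} to $(B,A)$ at $x=m_0(B)$ gives $A(\ze_{p^{m_0(B)+1}})\neq0$, and since $A(\ze_{p^i})=0$ for $1\leq i\leq k$ by hypothesis, we conclude $m_0(B)\geq k$. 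Applying the lemma at $x=k$ then converts $B(\ze_N^{p^k})=0$ into $A(\ze_{p^{k+1}})=0$, contradicting the hypothesis. The main technical obstacle lies in the pigeonhole step that produces the pair $(a,a')$ and in the careful bookkeeping needed to bound $m_0(B)$ from below.

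For the ``in particular'' conclusion that \ref{wt2} fails, suppose for contradiction that $A(\ze_{p^iq})=0$ for every $1\leq i\leq k$. Combined with the hypothesis on the $A(\ze_{p^i})$ and with $A(\ze_q)=0$ from Theorem \ref{summary}, we obtain $A(\ze_d)=0$ for every divisor $d>1$ of $p^kq$. Standard Fourier inversion over $\ZZ_{p^kq}$ then gives the full equidistribution $|A_{r\bmod p^kq}|=|A|/(p^kq)$ for every $r$. But the absorbed class produced in the first part satisfies $|A_{j+\ell_0p^k\bmod p^kq}|=|A_{j\bmod p^k}|=|A|/p^k$ (the second equality using equidistribution $\bmod p^k$), and equating these two values forces $q=1$, an absurdity.
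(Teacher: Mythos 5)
Your proposal is correct and follows essentially the same route as the paper: both arguments combine absorption-freeness at $p^k$ with the (assumed) failure of absorption $\bmod\, p^kq$ to produce $a,a'\in A$ with $\ord(a-a')=N/p^k$, deduce $B(\ze_N^{p^k})=0$ from the spectral pair, and contradict $A(\ze_{p^{k+1}})\neq0$ via Lemma \ref{rootpatterns}, the paper merely packaging your pigeonhole step as a citation of Proposition \ref{preroothunt}/Lemma \ref{abseqd}, and treating the failure of \ref{wt2} exactly as you do. Your explicit verification that $m_0(B)\geq k$ (via Lemma \ref{ma}, Corollary \ref{Bpq}, and Lemma \ref{rootpatterns} applied at $x=m_0(B)$) supplies a hypothesis that the paper's one-line appeal to Lemma \ref{abseqd} leaves implicit, which is a welcome addition rather than a deviation.
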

 
 \begin{proof}
 By definition, $A$ is absorption-free, therefore, there is some $j$ such that $A_{j\bmod p^k}$ is not absorbed $\bmod p^{k+1}$. Hence,
\[A_{j\bmod p^k}=\bigcup_{i=1}^r A_{j+(\ell_iq) p^k\bmod p^{k+1}},\]
where $r\geq2$, the $\ell_i$ are pairwise distinct $\bmod p$ and all sets in the above union are nonempty.
This shows that the multiset $\frac{N}{p^{k+1}q}\cdot A_{j\bmod p^k}$ has elements on $r$ distinct $q$-cycles, namely
 \[\frac{jN}{p^{k+1}q}+\ell_i\frac{N}{p}+\frac{N}{q}\ZZ_N, \;\;\;\; 1\leq i\leq r.\]
Consider two arbitary elements from two different $q$-cycles (relabeling the $\ell_i$ if necessary), say
\[\frac{a_1N}{p^{k+1}q}=\frac{jN}{p^{k+1}q}+\ell_1\frac{N}{p}+m_1\frac{N}{q} \;\;\;\; \text{ and } 
\;\;\;\; \frac{a_2N}{p^{k+1}q}=\frac{jN}{p^{k+1}q}+\ell_2\frac{N}{p}+m_2\frac{N}{q},\]
where $a_1, a_2\in A_{j\bmod p^k}$. We will show that $m_1\equiv m_2\bmod q$; if not, then 
\[\frac{N}{p^{k+1}q}(a_2-a_1)=(\ell_2-\ell_1)\frac{N}{p}+(m_2-m_1)\frac{N}{q}\in\frac{N}{pq}\ZZ_N^\star,\]
or equivalently $a_2-a_1\in p^k\ZZ_N^\star$, or $\ord(a_2-a_1)=\frac{N}{p^k}$, which by Corollary \ref{specord} gives $B(\ze_N^{p^k})=0$, contradicting the hypothesis 
$A(\ze_{p^{k+1}})\neq0$, due to Lemma \ref{rootpatterns} (especially \eqref{ABpsymalt}, with the roles of $A$ and $B$ reversed).
Thus, $m_1\equiv m_2\bmod q$, and considering any other pair of elements of $\frac{N}{p^{k+1}q}\cdot A_{j\bmod p^k}$ on two different $q$-cycles, where one element is
either $\frac{a_1N}{p^{k+1}q}$ or $\frac{a_2N}{p^{k+1}q}$, we obtain
\[\frac{N}{p^{k+1}q} A_{j\bmod p^k}\ssq \frac{jN}{p^{k+1}q}+\frac{N}{p}\ZZ_N+m_1\frac{N}{q},\]
by repeating the above argument.
This shows that $\frac{N}{p^{k+1}q}\cdot A_{j\bmod p^k}$ is supported on a $p$-cycle, thus $A_{j\bmod p^k}$ is absorbed $\bmod p^kq$, as desired.
The fact that \ref{wt2} fails follows easily from \eqref{wt2holds}.
 \end{proof}

 \begin{cor}\label{rootssmallpowers}
  Let $N\in\Spq'$, $(A,B)$ a spectral pair in $\ZZ_N$ with $A\in\Fmax(N)$. It holds
  \[A(\ze_{q^2})=B(\ze_{q^2})=A(\ze_{p^2})=B(\ze_{p^2})=0.\]
  If $p<q$, then 
  \[A(\ze_{p^3})=\dotsb=A(\ze_{p^{\ceil{\log_pq}+1}})=B(\ze_{p^3})=\dotsb=B(\ze_{p^{\ceil{\log_pq}+1}})=0,\]
  as well. Moreover,
  \[A(\ze_N^q)=B(\ze_N^q)=A(\ze_N^p)=\dotsb=A(\ze_N^{p^{\ceil{\log_pq}}})=B(\ze_N^p)=\dotsb=B(\ze_N^{p^{\ceil{\log_pq}}})=0.\]
 \end{cor}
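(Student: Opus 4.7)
The plan is to combine the absorption obstruction of Proposition \ref{absfreefails} with the equidistribution $\bmod pq$ already at hand. By Theorem \ref{summary} and Proposition \ref{basict2} we know $A(\ze_p)=A(\ze_q)=A(\ze_{pq})=0$ (and the same for $B$); together these three vanishings (with their Galois conjugates) exhaust the nontrivial characters of $\ZZ_{pq}$, so $A$ is equidistributed $\bmod pq$, namely $|A_{i\bmod pq}|=|A|/(pq)$ for every $i$ (and likewise for $B$). Theorem \ref{summary} moreover places $B\in\F'(N)$, so every argument I run for $A$ transfers verbatim to $B$.

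To obtain $A(\ze_{p^2})=0$, I argue by contradiction: if $A(\ze_{p^2})\neq0$, Proposition \ref{absfreefails} with $k=1$ supplies some $i$ for which $A_{i\bmod p}$ is absorbed $\bmod pq$, i.e.\ $A_{i\bmod p}=A_{i+\ell p\bmod pq}$ for some $\ell$; in particular $A_{i\bmod p}\subseteq A_{j\bmod pq}$ for the corresponding $j$. But $|A_{i\bmod p}|=|A|/p$ (from $A(\ze_p)=0$) exceeds $|A_{j\bmod pq}|=|A|/(pq)$, a contradiction. Swapping the roles of $p$ and $q$ gives $A(\ze_{q^2})=0$, and applying both arguments with $B$ in place of $A$ finishes the first line.

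For the range $3\leq j\leq\ceil{\log_p q}+1$ under the assumption $p<q$, I induct on $j$. Assume $A(\ze_p)=\dotsb=A(\ze_{p^k})=0$ for some $2\leq k\leq\ceil{\log_p q}$ and suppose $A(\ze_{p^{k+1}})\neq0$. Proposition \ref{absfreefails} yields some $i$ with $A_{i\bmod p^k}$ absorbed $\bmod p^kq$, so $A_{i\bmod p^k}$ is confined to a single $q$-coset and therefore to some $A_{j\bmod pq}$. The inductive equidistribution $\bmod p^k$ gives $|A_{i\bmod p^k}|=|A|/p^k$, hence $|A|/p^k\leq|A|/(pq)$, that is $p^{k-1}\geq q$; since $\log_p q$ is irrational (both $p,q$ prime and distinct), this forces $k-1\geq\ceil{\log_p q}$, contradicting $k\leq\ceil{\log_p q}$. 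The same induction applied to $B$ yields $B(\ze_{p^j})=0$ in the same range. This counting step --- the absorbed class being too large to fit inside its $\bmod pq$ superset --- is the main point, and the unique place where the hypothesis $p<q$ is used.

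The $N$-th root identities now follow from Lemma \ref{rootpatterns}. For $1\leq k\leq\ceil{\log_p q}$: if $k\leq m_0$, Lemma \ref{rootpatterns} equates $A(\ze_N^{p^k})=0$ with $B(\ze_{p^{k+1}})=0$, which has just been established; if $k>m_0$, then $A(\ze_N^{p^k})=0$ by the very definition of $m_0$. Similarly $A(\ze_N^q)=0$ follows from $B(\ze_{q^2})=0$ through the $q$-analogue of Lemma \ref{rootpatterns}, using that $n_0\geq1$ for every $A\in\F(N)$ (as noted after Definition \ref{m0n0}). Swapping $A\leftrightarrow B$ (both lie in $\F'(N)$) then gives the corresponding $B$-vanishings and closes the proof.
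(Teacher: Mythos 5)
Your argument is correct and follows essentially the same path as the paper: Proposition \ref{absfreefails}'s absorption conclusion, the resulting cardinality contradiction (an $|A|/p^k$-sized cell $A_{i\bmod p^k}$ cannot equal an $|A|/(pq)$-sized cell $A_{j\bmod pq}$), and Lemma \ref{rootpatterns} for the $N$-th root identities. Your explicit equidistribution-$\bmod pq$ phrasing and the induction over $k$ are cosmetic repackagings of the paper's appeal to the failure of \ref{wt2} and its one-shot computation at the maximal such $k$.
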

 
 \begin{proof}
  By Proposition \ref{absfreefails}, property \ref{wt2} fails for both $A$ and $B$. Combined with Proposition \ref{basict2}, this shows that 
  \[A(\ze_{p^2})=B(\ze_{p^2})=A(\ze_{q^2})=B(\ze_{q^2})=0,\]
  and then Lemma \ref{rootpatterns} implies
  \[A(\ze_N^p)=B(\ze_N^p)=A(\ze_N^q)=B(\ze_N^q)=0.\]
  Next, assume $p<q$, and suppose
  \[A(\ze_p)=\dotsb=A(\ze_{p^k})=0\neq A(\ze_{p^{k+1}}).\]
  By Proposition \ref{absfreefails}, there is some $j$ such that $A_{j\bmod p^k}$ is absorbed $\bmod p^kq$, that is,
  \[A_{j\bmod p^k}=A_{j+\ell p^k\bmod p^kq}\]
  for some $\ell$. But then,
  \[A_{j\bmod p^k}\ssq A_{j+\ell p^k\bmod pq},\]
  therefore,
  \[\frac{\abs{A}}{p^k}=\abs{A_{j\bmod p^k}}\leq \abs{A_{j+\ell p^k\bmod pq}}=\frac{\abs{A}}{pq},\]
  which yields $p^{k-1}> q$ or equivalently, $k>\log_pq+1$, so finally
  \[k\geq\ceil{\log_pq}+1.\]
  Reversing the roles of $A$ and $B$, we obtain the same result for $B$. The final part then follows directly from Lemma \ref{rootpatterns}.
 \end{proof}

 \begin{lemma}\label{Ulb}
  Let $(A,B)$ be a spectral pair in $\ZZ_N$, where $N\in\Spq'$ and $A\in\Fmax(N)$. Then, neither $A$ nor $B$ satisfy \ref{wt1}, as 
  \[\abs{U_A^N(p)}, \abs{U_B^N(p)}\geq \ceil{\log_pq}\]
  and
  \[\abs{U_A^N(q)}, \abs{U_B^N(q)}\geq \ceil{\log_qp}.\]
 \end{lemma}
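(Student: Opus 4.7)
By symmetry (using that $B\in\F'(N)$ as follows from Theorem \ref{summary}), it suffices to prove $|U_A^N(p)|\geq\lceil\log_p q\rceil$; the other three bounds then follow by swapping $A\leftrightarrow B$ or $p\leftrightarrow q$. If $p>q$ then $\lceil\log_p q\rceil=1$, and Proposition \ref{maxpqroots} together with the dual of Lemma \ref{rootpatterns} places $m_0^{(B)}+1$ into $U_A^N(p)$: the former gives $A(\ze_{p^{m_0^{(B)}+1}q})=0$ while the latter ensures $A(\ze_{p^{m_0^{(B)}+1}})\neq 0$. Hence assume $p<q$ and put $K:=\lceil\log_p q\rceil\geq 2$.

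Let $j_A:=\max\{j:A(\ze_{p^i})=0\text{ for every }1\leq i\leq j\}$; Corollary \ref{rootssmallpowers} gives $j_A\geq K+1$. By Proposition \ref{absfreefails} there is $i_*$ with $A_{i_*\bmod p^{j_A}}$ absorbed modulo $p^{j_A}q$, so that all $|A|/p^{j_A}$ elements of this class share a common residue modulo $q$. The plan is to produce $K$ distinct elements of $U_A^N(p)$ by iterating this absorption: for each $x$ in a suitably chosen window of length $K$ just above $j_A$, I apply Proposition \ref{preroothunt} to the multiset $\frac{N}{p^xq}\cdot A$ restricted to a $pq$-cycle. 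The argument mirrors the one used inside Proposition \ref{absfreefails}: if such a restriction were supported neither on a single $p$- nor on a single $q$-cycle, it would yield a difference $a-a'\in p^{x-1}\ZZ_N^\star$ in $A$, whence $B(\ze_N^{p^{x-1}})=0$ by Theorem \ref{mainref}; applying the dual of Lemma \ref{rootpatterns} would then force $A(\ze_{p^x})=0$, contradicting the fact that $x$ is above the consecutive block. Hence the restriction lies on a single cycle, forcing $A(\ze_{p^xq})=0$ and placing $x$ in $U_A^N(p)$.

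The principal obstacle is controlling how many distinct $x$'s can be extracted in this way. The size inequality at the origin of Corollary \ref{rootssmallpowers}, namely $|A|/p^{j_A}\leq |A|/(pq)$, is what gave a single iteration and the bound $j_A\geq K+1$; to reach $K$ new elements, I combine the iterated absorption events with the divisibility bound $v_p(|A|)\geq|S_A^N(p)|+|U_A^N(p)|$ from Lemma \ref{t1fail}, and use the mod-$pq$ equidistribution of $A$ from Proposition \ref{basict2} to track how the mod-$pq$ class sizes constrain subsequent iterations. The chain of inequalities produced saturates exactly when the iteration count reaches $\lceil\log_p q\rceil$, which is the number of times $p$ fits into $q$ logarithmically before $|A_{i_*\bmod p^{j_A}}|\leq |A_{j'\bmod pq}|$ becomes tight. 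Ensuring distinctness of the $K$ produced exponents, and verifying that each truly lies outside $S_A^N(p)$ rather than collapsing onto the already-known element $m_0^{(B)}+1$, is the most delicate bookkeeping step; the companion bound $|U_A^N(q)|\geq\lceil\log_q p\rceil$ (trivial when $p<q$) follows by swapping the roles of $p$ and $q$ throughout.
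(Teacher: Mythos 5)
There is a genuine gap: you sketch a plan to ``iterate the absorption'' argument from Proposition~\ref{absfreefails} and thereby collect $K=\ceil{\log_pq}$ elements of $U_A^N(p)$, but you never actually carry out the iteration. The decisive step --- producing $K$ distinct exponents $x$ with $A(\ze_{p^x})\neq0=A(\ze_{p^xq})$, showing they are distinct, and showing they really lie above the initial run of consecutive roots --- is exactly what you flag as ``the most delicate bookkeeping step'' and then leave undone. What you give is a strategy outline, not a proof.

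Moreover, two of the ingredients you propose to combine point the wrong way. The inequality from Lemma~\ref{t1fail}, namely $p^{\abs{S_A^N(p)}+\abs{U_A^N(p)}}\mid\abs{A}$, is a consequence of large $\abs{U_A^N(p)}$, not a route to proving it; invoking it to lower-bound $\abs{U_A^N(p)}$ would be circular unless you had an independent lower bound on $v_p(\abs{A})$, which you do not supply. Likewise the estimate $\abs{A}/p^{j_A}\leq\abs{A}/(pq)$ from Corollary~\ref{rootssmallpowers} only certifies the single absorption event and the bound $j_A\geq K+1$; it does not by itself generate further members of $U_A^N(p)$. The paper's proof avoids these pitfalls by introducing a counting function $f_j(r)$, the number of nonempty classes $A_{j+\ell p^{m-r-1}q^{n-1}\bmod p^{m-r-1}q^n}$, and then using three precise facts: $f_j(k)=1$ at the bottom of the consecutive run, $f_j(m_0)=q$ at the top, $f_j(r-1)\geq\tfrac1p f_j(r)$ in each step, and crucially $f_j(r-1)<f_j(r)\Rightarrow r+1\in U_B^N(p)$. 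Since $f_j$ climbs from $1$ to $q$ with increases bounded by a factor $p$ per step, at least $\ceil{\log_pq}$ increases occur, each producing a distinct element of $U_B^N(p)$. That pigeonhole-style counting --- tying each increment of $f_j$ to a new element of $U_B^N(p)$ --- is the missing mechanism in your argument, and without it (or something that plays the same role) the proposal does not close.
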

 
 \begin{proof}
 Without loss of generality, we may assume $p<q$, so that the second inequality becomes $\abs{U_A^N(q)}, \abs{U_B^N(q)}\geq1$, which easily follows from Proposition \ref{maxpqroots}.
  We remind that $m_p(A)$ denotes as usual the maximal exponent $x$ such that $A(\ze_N^{p^x})\neq0$. We will show
  \[\abs{U_B^N(p)}\geq\ceil{\log_pq},\]
  and the inequality $\abs{U_A^N(p)}\geq\ceil{\log_pq}$ has similar proof. Denote
  \[s=\abs{[0,m_p(A)]\cap R_A^N(p)}.\]
  We will show the existence of $j$ satisfying
  \[\abs{A_{j\bmod p^{m-m_p(A)-1}q^{n-1}}}=p^sq\]
  and $A_{j\bmod p^{m-m_p(A)-1}q^{n-1}}$ is not absorbed $\bmod p^{m-m_p(A)}q^{n-1}$. For convenience, we will put $d=p^{m-m_p(A)-1}q^{n-1}$. Suppose that
  \[A(\ze_N)=A(\ze_N^{p})=\dotsb=A(\ze_N^{p^k})=0\neq A(\ze_N^{p^{k+1}}),\]
  so that
  \[B(\ze_p)=B(\ze_{p^2})=\dotsb=B(\ze_{p^{k+1}})=0\neq B(\ze_{p^{k+2}}),\]
  by Lemma \ref{rootpatterns}. It holds
  \[A(X)\equiv P(X)\Php+Q(X)\Phq\bmod(X^N-1)\]
  for some $P(X),Q(X)\in\ZZ_{\geq0}[X]$ by Theorem \ref{vansumspos}, and $P\not\equiv0$ by Theorem \ref{summary}\eqref{summary2}.
  Replacing $X$ by $X^p$, and observing that $\Phq\equiv\Phi_q(X^{pN/q})\bmod(X^N-1)$ we obtain
  \[A(X^p)\equiv pP(X^p)+Q(X^p)\Phq\bmod(X^N-1).\]
  Since $A(\ze_N^p)=0$, we must have
  \[pP(\ze_N^p)=A(\ze_N^p)-Q(\ze_N^p)\Phi_q(\ze_q^p)=0,\]
  therefore
  \[P(X^p)\equiv \wt{P}(X^p)\Php+\wt{Q}(X^p)\Phq\bmod(X^N-1),\]
  where $\wt{P}(X),\wt{Q}(X)\in\ZZ_{\geq0}[X]$, hence
  \[A(X^p)\equiv pP_1(X^p)\Php+Q_1(X^p)\Phq\bmod(X^N-1),\]
  where $P_1(X^p)=\wt{P}(X^p)$ and $Q_1(X^p)=Q(X^p)+p\wt{Q}(X^p)$. The polynomial $P_1$ is not identically zero, since $A(\ze_N^{p^{k+1}})\neq0$ (see Theorem \ref{vansumspos}). Repeating
  this process for the roots 
  \[\ze_N^{p^2},\dotsc,\ze_N^{p^k},\]
  we obtain
  \begin{equation}\label{pk104}
  A(X^{p^k})\equiv p^kP_k(X^{p^k})\Php+Q_k(X^{p^k})\Phq\bmod(X^N-1),
  \end{equation}
  where $P_k(X),Q_k(X)\in\ZZ_{\geq0}[X]$, and $P_k$ is not identically zero, since $A(\ze_N^{p^{k+1}})\neq0$. As 
  \begin{equation}\label{pk+1104}
  A(X^{p^{k+1}})\equiv p^{k+1}P_k(X^{p^{k+1}})+Q_k(X^{p^{k+1}})\Phq\bmod(X^N-1)
  \end{equation}
  is the mask polynomial of the mutliset $p^{k+1}\cdot A$, we deduce that
  there are elements in $p^{k+1}\cdot A$ whose multiplicity is exactly $p^{k+1}$ (in general, since $A$ is a proper set, the multiplicity of any element in $d\cdot A$ cannot exceed $d$). The element
  $p^{k+1}a$ can only have multiplicity $p^{k+1}$ in $A$ only if
  \[a+\frac{N}{p^{k+1}}\ZZ_N\ssq A,\]
  i.e. $a$ is an element of a $p^{k+1}$-cycle contained in $A$, or equivalently, $\abs{A_{a\bmod p^{m-k-1}q^n}}=p^{k+1}$. $A$ cannot have two $p^{k+1}$-cycles of the form
  \[a+\frac{N}{p^{k+1}}\ZZ_N, \;\;\;\; a+\ell\frac{N}{p^{k+1}q}+\frac{N}{p^{k+1}}\ZZ_N,\]
  for $\ell\not\equiv0\bmod q$, because in that case, we would have 
  \[\ell p^{m-k-1}q^{n-1}+p^{m-k-1}q^n\ZZ_N\ssq A-A,\]
  which implies that
  \[(A-A)\cap d\znp\neq0, \;\;\;\; \forall d\in\set{p^{m-k-1}q^{n-1},p^{m-k}q^{n-1},\dotsc,p^{m-2}q^{n-1},p^{m-1}q^{n-1}},\]
  whence
  \[B(\ze_{pq})=B(\ze_{p^2q})=\dotsb=B(\ze_{p^{k+1}q})=0,\]
  by Theorem \ref{mainref}\ref{i},
  which means that $B$ satisfies \ref{wt2}, contradicting Proposition \ref{absfreefails}. The above show that
  \begin{equation}\label{104absorption}
  \abs{A_{a\bmod p^{m-k-1}q^n}}=p^{k+1}\Longrightarrow A_{a\bmod p^{m-k-1}q^{n-1}}\text{ is absorbed }\bmod p^{m-k-1}q^n.
  \end{equation}
  Now let 
  \[R_A^N(p)\cap[0,m_p(A)]=\set{r_0,r_1,\dotsc,r_{s-1}},\]
  where the $r_i$ are in increasing order. We know already that $r_i=i$, for $0\leq i\leq k$. Continuing from \eqref{pk104} we obtain
  \[A(X^{p^{r_{k+1}}})\equiv p^{k+1}P_k(X^{p^{r_{k+1}}})+Q_k(X^{p^{r_{k+1}}})\Phq\bmod(X^N-1).\]
  Since $A(\ze_N^{p^{r_{k+1}}})=0\neq A(\ze_N^{p^{m_p(A)}})$, we get that $P_k(\ze_N^{p^{r_{k+1}}})=0\neq P_k(\ze_N^{p^{m_p(A)}})$, whence by Theorem \ref{mainref}\ref{i} we obtain
  \[P_k(X^{p^{r_{k+1}}})\equiv\wt{P}_k(X^{p^{r_{k+1}}})\Php+\wt{Q}_k(X^{p^{r_{k+1}}})\Phq\bmod(X^N-1),\]
  for some $\wt{P}_k(X), \wt{Q}_k(X)\in\ZZ_{\geq0}[X]$, with $\wt{P}_k$ not identically zero. This gives
  \[A(X^{p^{r_{k+1}}})\equiv p^{k+1}P_{k+1}(X^{p^{r_{k+1}}})\Php+Q_{k+1}(X^{p^{r_{k+1}}})\Phq\bmod(X^N-1),\]
  where $P_{k+1}(X)=\wt{P}_k(X)$ and $Q_{k+1}(X)=Q_k(X)+p^{k+1}\wt{Q}_k(X)$. Repeating this process for the roots
  \[\ze_N^{p^{r_{k+2}}},\dotsc,\ze_N^{p^{r_{s-1}}},\]
  we obtain
  \[A(X^{p^{r_{s-1}}})\equiv p^{s-1}P_{s-1}(X^{p^{r_{s-1}}})\Php+Q_{s-1}(X^{p^{r_{s-1}}})\Phq\bmod(X^N-1),\]
  for some $P_{s-1}(X),Q_{s-1}(X)\in\ZZ_{\geq0}[X]$, where $P_{s-1}$ is not identically zero, since $r_{s-1}<m_p(A)$ and $A(\ze_N^{p^{m_p(A)}})\neq0$; this is achieved by repeated use of Theorem
  \ref{vansumspos}. Therefore, it holds
  \[A(X^{p^{m_p(A)+1}})\equiv p^sP_{s-1}(X^{p^{m_p(A)+1}})+Q_{s-1}(X^{p^{m_p(A)+1}})\Phq\bmod(X^N-1).\]
  By definition of $m_p(A)$ we have
  \[A(\ze_N^{p^{m_p(A)+1}})=A(\ze_N^{p^{m_p(A)+2}})=\dotsb=A(\ze_N^{p^m})=0,\]
  hence
  \[P_{s-1}(\ze_N^{p^{m_p(A)+1}})=P_{s-1}(\ze_N^{p^{m_p(A)+2}})=\dotsb=P_{s-1}(\ze_N^{p^{m}})=0,\]
  so by Lemma \ref{ma} we get
  \[P_{s-1}(X^{p^{m_p(A)+1}})\equiv P_s(X^{p^{m_p(A)+1}})\Phq\bmod(X^N-1),\]
  where $P_s(X)\in\ZZ_{\geq0}[X]$, hence
  \begin{equation}\label{104final}
  A(X^{p^{m_p(A)+1}})\equiv(p^sP_s(X^{p^{m_p(A)+1}})+Q_{s-1}(X^{p^{m_p(A)+1}}))\Phq\bmod(X^N-1),   
  \end{equation}
  so that the multiset $p^{m_p(A)+1}\cdot A$ is a disjoint union of $q$-cycles. It is clear from \eqref{104final} that there is some $j$ with $\abs{A_{j\bmod p^{m-m_p(A)-1}q^n}}\geq p^s$. However, we will show that
  \[\abs{A_{j\bmod p^{m-m_p(A)-1}q^n}}\leq p^s,\]
  for every $j$. For this purpose, consider the $p$-adic expansion of $a-j$ where $a$ is an arbitrary element of $A_{j\bmod p^{m-m_p(A)-1}q^n}$
  \[a-j\equiv a_{m-m_p(A)-1}p^{m-m_p(A)-1}+\dotsb+a_{m-1}p^{m-1}\bmod p^m,\]
  where as usual, $0\leq a_i\leq p-1$, $m-m_p(A)-1\leq i\leq m-1$. Now, if $\abs{A_{j\bmod p^{m-m_p(A)-1}q^n}}> p^s$, then there would exist two elements 
  $a,a'\in A_{j\bmod p^{m-m_p(A)-1}q^n}$, such that the $p$-adic expansions
  of $a-j$ and $a'-j$ would have the same digits at all places $m-r$, for $r-1\in R_A^N(p)$. Therefore, $a-a'=(a-j)-(a'-j)\in d\znp$, where $d=p^{m-r}q^n$, with $r-1\leq m_p(A)$ and $r-1\notin R_A^N(p)$. By Lemma
  \ref{rootpatterns} we would have $r\notin S_B^N(p)$ whence $B(\ze_{p^r})\neq0$, contradicting Theorem \ref{mainref}\ref{i} or Corollary \ref{specord}, which yield $B(\ze_{p^r})=0$ due to
  $a-a'\in p^{m-r}q^n\znp$.
  
  To summarize, we have $\abs{A_{j\bmod p^{m-m_p(A)-1}q^n}}\leq p^s$ for every $j$, and equality occurs for some $j$. Using the same arguments as above, we deduce that if $\abs{A_{j\bmod p^{m-m_p(A)-1}q^n}}=p^s$, then
  the elements of $A_{j\bmod p^{m-m_p(A)-1}q^n}-j$ obtain all possible arrangements of $p$-adic digits at the places $m-r$, where $r-1\in R_A^N(p)$; in particular, $A_{j\bmod p^{m-m_p(A)-1}q^n}$ must be a disjoint
  union of $p^{k+1}$-cycles. So, let $\abs{A_{j\bmod p^{m-m_p(A)-1}q^n}}= p^s$. By \eqref{104absorption} and \eqref{104final} we obtain
  \[\abs{A_{j\bmod p^{m-k-1}q^{n-1}}}=p^{k+1}, \;\;\;\; \abs{A_{j\bmod p^{m-m_p(A)-1}q^{n-1}}}=p^sq.\]
  We observe that $A_{j\bmod p^{m-m_p(A)-1}q^{n-1}}$ is equidistributed $\bmod p^{m-m_p(A)-1}q^n$, while $A_{j\bmod p^{m-k-1}q^{n-1}}$ is absorbed $\bmod p^{m-k-1}q^n$. For every $r\in[k,m_p(A)]$, we will take into account
  the number of nonempty sets in the partition
  \[A_{j\bmod p^{m-r-1}q^{n-1}}=\bigsqcup_{u=0}^{q-1}A_{j+u p^{m-r-1}q^{n-1}\bmod p^{m-r-1}q^n},\]
  denoted by $f_j(r)$, so that $1\leq f_j(r)\leq q$ for all $r\in[k,m_p(A)]$, and $f_j(m_p(A))=q$, $f_j(k)=1$. Now,
  pick $j'$ such that $j'\equiv j\bmod p^{m-m_p(A)-1}q^n$ and
  \[f_{j'}(r-1)=\max_{0\leq\ell\leq p-1}f_{j'+\ell p^{m-r-1}q^{n-1}}(r-1).\]
  Without loss of generality, we may simply have $j'=j$, and it holds
  \[f_j(r-1)\geq\frac{1}{p} f_j(r), \;\;\;\; r\in[k+1,m_p(A)].\]
  We observe that if $f_j(r-1)<f_j(r)$, then $r+1\in U_B^N(p)$. Indeed, consider the multiset $p^r\cdot A_{j\bmod p^{m-r-1}q^{n-1}}$, which is supported at the $pq$-cycle $p^rj+\frac{N}{pq}\ZZ_N$. We will
  show that $A(\ze_N^{p^r})\neq0$, which implies $B(\ze_{p^{r+1}})\neq0$ by Lemma \ref{rootpatterns}. Assume otherwise, that $A(\ze_N^{p^r})=0$; hence, $p^r\cdot A_{j\bmod p^{m-r-1}q^{n-1}}$ must be 
  a disjoint union of $p$- and $q$-cycles. Since $f_j(r-1)<f_j(r)$, we cannot have any $q$-cycles, because
  \[f_{j+\ell p^{m-r-1}q^{n-1}}(r-1)\leq f_j(r-1)<f_j(r)\leq f_j(m_p(A))=q, \;\;\;\; 0\leq\ell\leq p-1,\]
  which means that for every $0\leq \ell\leq p-1$, there is $0\leq u\leq q-1$, such that
  \[A_{j+\ell p^{m-r-1}q^{n-1}+u p^{m-r}q^{n-1}\bmod p^{m-r}q^n}=\vn.\]
  But $p^r\cdot A_{j\bmod p^{m-r-1}q^{n-1}}$ cannot be a disjoint union of $p$-cycles either, because in this case $A_{j\bmod p^{m-r-1}q^{n-1}}$ would be equidistributed $\bmod p^{m-r}q^{n-1}$, whence
  $f_j(r-1)=f_j(r)$, contradiction. Hence, $A(\ze_N^{p^r})B(\ze_{p^{r+1}})\neq0$. Moreover, $p^r\cdot A_{j\bmod p^{m-r-1}q^{n-1}}$ can be supported neither on a $q$-cycle, because in this case $f_j(r-1)=f_j(r)$ and
  \[f_{j+\ell p^{m-r-1}q^{n-1}}(r-1)=0, \;\;\;\; 1\leq \ell \leq p-1,\]
  nor on a $p$-cycle, because it would hold
  \[f_{j+\ell p^{m-r-1}q^{n-1}}(r-1)=f_j(r)=1, \;\;\;\; 0\leq \ell\leq p-1,\]
  contradiction. Hence, by Proposition \ref{preroothunt}, we obtain
  \[(p^rA-p^rA)\cap\frac{N}{pq}\znp\neq\vn,\]
  so there are $a,a'\in A$ such that $a-a'\in\frac{N}{p^{r+1}q}$, whence $B(\ze_{p^{r+1}q})=0$ by Theorem \ref{mainref}\ref{i}, and $r+1\in U_B^N(p)$, as desired.
  
  For the last part of our proof, we will just show that we have $f_j(r-1)<f_j(r)$ for at least $\ceil{\log_pq}$ values of $r$. Indeed, let 
  \[\set{r_1,\dotsc,r_t}=\set{r\in[k,m_p(A)]:f_j(r-1)<f_j(r)}.\]
  By definition, and the property $f_j(r-1)\geq\frac{1}{p}f_j(r)$ for all $r$, we obtain
  \begin{multline*}
  q=f_j(r_t)\leq pf_j(r_t-1)=pf_j(r_{t-1})\leq p^2 f_j(r_{t-1}-1)=p^2 f_j(r_{t-2})\leq \dotsb\\
  \dotsb\leq p^{t-1}f_j(r_1)\leq p^tf_j(r_1-1)=p^tf_j(k)=p^t,
  \end{multline*}
  which clearly implies $t\geq\ceil{\log_pq}$, finally showing
  \[\abs{U_B^N(p)\cap[1,m_p(A)+1]}\geq \ceil{\log_pq},\]
  as desired.
 \end{proof}

 \begin{cor}\label{divAB}
  Let $(A,B)$ be a spectral pair in $\ZZ_N$, where $N\in\Spq'$ and $A\in\Fmax(N)$. Then
  \[p^{\abs{S_B^N(p)}+\ceil{\log_pq}}q^{\abs{S_B^N(q)}+\ceil{\log_qp}}\mid \abs{B}\]
  and
  \[p^{\abs{S_A^N(p)}+\ceil{\log_pq}}q^{\abs{S_A^N(q)}+\ceil{\log_qp}}\mid \abs{A}.\]
 \end{cor}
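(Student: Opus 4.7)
The plan is to observe that the Corollary is essentially a direct combination of Lemma~\ref{t1fail} with Lemma~\ref{Ulb}, once one invokes the symmetry between $A$ and $B$. Lemma~\ref{t1fail}, applied to the spectral pair $(A,B)$, gives the divisibility
\[
p^{\abs{S_B^N(p)}+\abs{U_B^N(p)}}\mid \abs{B}.
\]
By running the proof of Lemma~\ref{t1fail} with the roles of the primes $p$ and $q$ interchanged (which is legitimate since the hypotheses and the appeals to Theorem~\ref{vansumspos}, Proposition~\ref{preroothunt}, and Lemma~\ref{abseqd} are all symmetric in $p,q$), one likewise obtains
\[
q^{\abs{S_B^N(q)}+\abs{U_B^N(q)}}\mid \abs{B}.
\]
Since $\gcd(p,q)=1$, these two divisibilities combine to give
\[
p^{\abs{S_B^N(p)}+\abs{U_B^N(p)}}\,q^{\abs{S_B^N(q)}+\abs{U_B^N(q)}}\mid\abs{B}.
\]

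Next I would apply the lower bounds $\abs{U_B^N(p)}\geq \ceil{\log_p q}$ and $\abs{U_B^N(q)}\geq\ceil{\log_q p}$ provided by Lemma~\ref{Ulb}. Substituting these into the exponents yields precisely the first divisibility claimed in the Corollary for $\abs{B}$.

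For the statement about $\abs{A}$, the strategy is to exploit the symmetry of the spectral pair condition: if $(A,B)$ is a spectral pair, then so is $(B,A)$, since unitarity of the matrix $\tfrac{1}{\sqrt{\abs{A}}}(\ze_N^{ab})_{a\in A, b\in B}$ is equivalent to unitarity of its conjugate transpose. Moreover, by Theorem~\ref{summary}, $A\in\F'(N)$ forces $B\in\F'(N)$ as well (both are primitive, both are absorption-free, and neither is a union of a single type of cycle). Thus we may apply Lemma~\ref{t1fail} and Lemma~\ref{Ulb} to the spectral pair $(B,A)$, which yields the analogous divisibility for $\abs{A}$ with the roles of $A$ and $B$ swapped.

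The argument involves no real obstacle beyond confirming the symmetric applicability of the earlier lemmas; the mild care required is simply to note that $B\in\F'(N)$ whenever $A\in\F'(N)$, and that the $q$-analogue of Lemma~\ref{t1fail} holds by a verbatim argument with $p$ and $q$ exchanged. Everything else is bookkeeping on exponents.
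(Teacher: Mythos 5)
Your proposal is correct and follows essentially the same route as the paper: the paper's proof is exactly "by Lemmata \ref{t1fail} and \ref{Ulb} we obtain $p^{\abs{S_B^N(p)}+\ceil{\log_pq}}\mid\abs{B}$, then reverse the roles of $A$ and $B$, and of $p$ and $q$." Your extra remarks (coprimality of $p,q$ to combine the two prime-power divisibilities, symmetry of the spectral pair via unitarity, and $B\in\F'(N)$) are just the implicit justifications the paper leaves unstated.
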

 
 \begin{proof}
  By Lemmata \ref{t1fail} and \ref{Ulb} we obtain
  \[p^{\abs{S_B^N(p)}+\ceil{\log_pq}}\mid \abs{B}.\]
  Reversing the roles of $A$ and $B$, and of $p$ and $q$, we obtain the desired result.
 \end{proof}
 
 If $p<q$, we observe that the power of $p$ dividing $\abs{A}$ with $A\in\Fmax(N)$ is at least
 \[\abs{S_A^N(p)}+\ceil{\log_pq}\geq 2\ceil{\log_pq}+2\geq6,\]
 which can be seen by Theorem \ref{summary} and Corollary \ref{rootssmallpowers}. Moreover, the power of $q$ dividing $\abs{A}$ is at least
 \[\abs{S_A^N(q)}+1\geq 4,\]
 again by Theorem \ref{summary} and Corollary \ref{rootssmallpowers}. We have thus proven:
 
 \begin{cor}
  Let $N=p^mq^n$, with $p<q$, and let $A\ssq\ZZ_N$ be spectral. If either $m\leq6$ or $n\leq4$, then $A$ tiles. The same conclusion holds if $p^{m-4}<q^2$.
 \end{cor}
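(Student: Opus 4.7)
The plan is to argue by contradiction using the divisibility estimates of Corollary~\ref{divAB} together with the maximal-power tiling criterion of Proposition~\ref{maxpower}. Suppose some spectral $A\subseteq\ZZ_N$ fails to tile; then $N\in\Spq$, and passing to a minimal element of $\Spq$ among the divisors of $N$ gives $N'=p^{m'}q^{n'}\in\Spqmin$ with $m'\leq m$ and $n'\leq n$. By the equivalence \eqref{absfreespec} we may pick $A'\in\F'(N')$; Corollary~\ref{pmq2} moreover forces $n'\geq 4$.

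The first step is to assemble lower bounds on $v_p(\abs{A'})$ and $v_q(\abs{A'})$. Theorem~\ref{summary} places the exponents $1$, $2$, and $m'$ into $S_{A'}^{N'}(p)$, and Corollary~\ref{rootssmallpowers} adds $3,\dotsc,\ceil{\log_pq}+1$ (using $p<q$), giving $\abs{S_{A'}^{N'}(p)}\geq\ceil{\log_pq}+2$ whenever $m'\geq\ceil{\log_pq}+2$ (in the exceptional range $m'\leq\ceil{\log_pq}+1$ one has $\abs{S_{A'}^{N'}(p)}\geq m'$ directly). Corollary~\ref{divAB} then yields
\[
v_p(\abs{A'})\;\geq\;\abs{S_{A'}^{N'}(p)}+\ceil{\log_pq}\;\geq\;2\ceil{\log_pq}+2\;\geq\;6,
\]
the last inequality because $p<q$ forces $\ceil{\log_pq}\geq 2$. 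Symmetrically, the forced roots $q$, $q^2$, $q^{n'}$ (distinct since $n'\geq 4$) give $\abs{S_{A'}^{N'}(q)}\geq 3$, and $\ceil{\log_qp}=1$ produces $v_q(\abs{A'})\geq 4$.

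The decisive step is Proposition~\ref{maxpower}: if $p^{m'}\mid\abs{A'}$, then $A'$ tiles $\ZZ_{N'}$, contradicting $A'\in\F'(N')$. Hence $v_p(\abs{A'})\leq m'-1$, which combined with the lower bound forces $m'\geq 2\ceil{\log_pq}+3\geq 7$; the exceptional regime $m'\leq\ceil{\log_pq}+1$ already gives $v_p(\abs{A'})>m'$ and is ruled out the same way. The symmetric argument in the $q$-direction gives $n'\geq 5$. Since $m'\leq m$ and $n'\leq n$, each of the hypotheses $m\leq 6$ or $n\leq 4$ immediately contradicts these lower bounds, so no such $A$ exists and every spectral subset of $\ZZ_N$ tiles.

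The condition $p^{m-4}<q^2$ should yield to the same strategy, since it translates into an arithmetic bound on $m$ relative to $\log_pq$ that fits into the divisibility chain above. The main obstacle is the boundary case in which $v_p(\abs{A'})$ equals exactly $m'-1$, where the elementary bound $2\ceil{\log_pq}+2$ is not quite strict: one must combine the forced elements of $S_{A'}^{N'}(p)$ with the at-least-$\ceil{\log_pq}$ elements of $U_{A'}^{N'}(p)\cap[1,m_0+1]$ from Lemma~\ref{Ulb} (disjoint from $S_{A'}^{N'}(p)$) to fully exploit the absorption-freedom of $A'$ and squeeze out the remaining contradiction.
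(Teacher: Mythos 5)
Your proof of the first assertion (the case $m\le 6$ or $n\le 4$) is correct and follows the paper's own route: reduce to a minimal counterexample, take an absorption-free $A'\in\F'(N')$ via \eqref{absfreespec}, extract $p^{2\ceil{\log_pq}+2}q^{4}\mid\abs{A'}$ from Theorem \ref{summary}, Corollary \ref{rootssmallpowers} and Corollary \ref{divAB}, and contradict Proposition \ref{maxpower}. Your explicit treatment of the range $m'\le\ceil{\log_pq}+1$ is a small refinement the paper leaves implicit, and your use of Corollary \ref{pmq2} to get $n'\ge 4$ is fine.

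The gap is the second assertion: for $p^{m-4}<q^2$ you offer only ``should yield to the same strategy'' plus an unresolved ``boundary case,'' which is not a proof, and the remedy you gesture at cannot work as stated. Adding the $\ge\ceil{\log_pq}$ elements of $U_{A'}^{N'}(p)$ from Lemma \ref{Ulb} to the elements of $S_{A'}^{N'}(p)$ is precisely the content of Corollary \ref{divAB} (whose proof is Lemma \ref{t1fail} combined with Lemma \ref{Ulb}), so it reproduces the bound $v_p(\abs{A'})\ge\abs{S_{A'}^{N'}(p)}+\ceil{\log_pq}$ you already have and squeezes out nothing new. The paper closes this case by a purely arithmetic translation, not by improving the divisibility: $p^{m-4}<q^2$ is rewritten as $p^{\frac m2-2}<q$, i.e.\ $\frac m2-1\le\ceil{\log_pq}$, hence $m\le 2\ceil{\log_pq}+2\le\abs{S_A^N(p)}+\ceil{\log_pq}\le v_p(\abs{A})$, so $p^{m}\mid\abs{A}$ and Proposition \ref{maxpower} gives the contradiction outright. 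In other words the target is $p^{m}\mid\abs{A}$, not a strict sharpening of $v_p(\abs{A'})\le m'-1$, so once the hypothesis has been converted into a bound on $m$ there is no ``$v_p(\abs{A'})=m'-1$'' case left to analyze. (Your instinct that a boundary is delicate is not baseless: for odd $m$ with the fractional part of $\log_pq$ exceeding $\tfrac12$, e.g.\ $p=2$, $q=3$, $m=7$, the hypothesis $p^{m-4}<q^2$ allows $m=2\ceil{\log_pq}+3$, so the paper's displayed ``equivalently'' is really an implication that needs $m$ even or a slightly finer estimate; but your sketch does not resolve that case either, whereas it is absorbed later by Theorem \ref{mainthm} via Proposition \ref{deficitestimate3}.)
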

 
 \begin{proof}
  Assume otherwise, that $N\in\Spq$; without loss of generality, we may assume $N\in\Spq'$.
  Let $A\in\Fmax(N)$, so that $p^6q^4\mid\abs{A}$, as shown above. By Proposition \ref{maxpower}, this would mean that
  $A$ tiles, if either $m\leq6$ or $n\leq4$, contradiction. For the second part, we observe that $p^{m-4}<q^2$ is equivalent to $p^{\frac{m}{2}-2}<q$, or equivalently,
  \[\frac{m}{2}-1\leq\ceil{\log_pq}\Lrar m\leq 2\ceil{\log_pq}+2\leq \abs{S_A^N(p)}+\ceil{\log_pq},\]
  which again implies that $p^m\mid\abs{A}$, hence $A$ tiles by Proposition \ref{maxpower}, contradiction. This completes the proof.
 \end{proof}

 We may assume henceforth that both exponents are at least $5$, and that of the smallest prime is at least $7$. We will refine the above bound, by introducing a new concept.

 \begin{defn}
  Let $(A,B)$ be a primitive spectral pair in $\ZZ_N$. Define the {\em $p$-root deficit} of $A$ as
  \[\deficit_p(A)=\abs*{\set{x:A(\ze_{p^{x+1}})\neq0=B(\ze_N^{p^x})}},\]
  and similarly define $\deficit_q(A)$, $\deficit_p(B)$, $\deficit_q(B)$.
 \end{defn}
 
 \begin{rem}
  We should note that the root deficits of a spectral set $A$ always depend on the choice of the spectrum $B$.
 \end{rem}
 
 \begin{lemma}\label{deficitestimate}
  Let $(A,B)$ be a primitive spectral pair in $\ZZ_N$, $N\in\Spq'$, $A\in\Fmax(N)$. Then, the following inequalities hold:
  \begin{eqnarray*}
\abs{A}=\abs{B}\leq\min\{p^{\abs{S_B^N(p)}+\deficit_p(B)}q^{\abs{S_A^N(q)}},p^{\abs{S_A^N(p)}+\deficit_p(A)}q^{\abs{S_B^N(q)}},\\ 
  p^{\abs{S_B^N(p)}}q^{\abs{S_A^N(q)}+\deficit_q(A)},  p^{\abs{S_A^N(p)}}q^{\abs{S_B^N(q)}+\deficit_q(B)}\}
  \end{eqnarray*}
 \end{lemma}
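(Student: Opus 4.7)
The four bounds are related by the $A\leftrightarrow B$ and $p\leftrightarrow q$ symmetries of a primitive spectral pair: since $(B,A)$ is again a spectral pair by Theorem \ref{mainref}\ref{iii}, it suffices to establish a single representative inequality, which I take to be
\[
|B|\;\leq\;p^{|S_A^N(p)|+\deficit_p(A)}\,q^{|S_B^N(q)|}.
\]
My approach is to factor $|A|=|B|$ through the natural projection $\pi\colon\ZZ_N\to\ZZ_{p^m}$ with kernel $p^m\ZZ_N$, using
\[
|B|=|A|\leq|\pi(A)|\cdot\max_{y\in\pi(A)}|A\cap\pi^{-1}(y)|,
\]
and to bound each factor separately.

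\emph{Fiber bound.} Two distinct elements $a\neq a'$ in a common fiber of $\pi$ have $a-a'\in p^m\ZZ_N\setminus\{0\}$, hence of pure order $q^{n-s}$; the symmetric form of Corollary \ref{specord} (using that $A$ is itself a spectrum of $B$) gives $B(\ze_{q^{n-s}})=0$, i.e., $n-s\in S_B^N(q)$. The $q$-adic digit-injection argument used in the proof of Theorem \ref{nopq} then bounds each fiber cardinality by $q^{|S_B^N(q)|}$.

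\emph{Projection bound.} A direct bookkeeping with Lemma \ref{rootpatterns} applied to the pair $(B,A)$ (so that $(S_A^N(p)-1)\cap[0,m'_0]=R_B^N(p)\cap[0,m'_0]$ for $m'_0$ the largest exponent with $B(\ze_N^{p^{m'_0}})\neq0$), combined with $m\in R_B^N(p)$ from Theorem \ref{summary} and the automatic inclusion $(m'_0,m]\ssq R_B^N(p)$, identifies
\[
|S_A^N(p)|+\deficit_p(A)\;=\;|R_B^N(p)|-1\;=\;|R_B^N(p)\cap[0,m-1]|.
\]
The goal then reduces to showing $|\pi(A)|\leq p^{|R_B^N(p)\cap[0,m-1]|}$. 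I plan to do this by the $p$-adic digit-injection principle, recording for each $\bar a\in\pi(A)$ its $p$-adic digits at positions indexed by $R_B^N(p)\cap[0,m-1]$; this injection is valid provided that for every pair of distinct elements $\bar a\neq\bar a'\in\pi(A)$, the valuation $v_p(\bar a-\bar a')$ lies in $R_B^N(p)$. Lifting to $a,a'\in A$ with $v_p(a-a')=r<m$ and $v_q(a-a')=s$, the symmetric form of Corollary \ref{specord} yields $B(\ze_{p^{m-r}q^{n-s}})=0$. The case $s=0$ is exactly $r\in R_B^N(p)$. The substantive case $s>0$ is handled by combining the absorption-free structure of $A$ (Theorem \ref{summary}, part (4), together with Lemma \ref{absfreeredux}) with the multiplication-based tools of Section 5 — in particular the decomposition of the multiset $(p^rq^s)\cdot A$ into disjoint unions of $p$- and $q$-cycles via Theorem \ref{vansumspos} and the $v_p$-raising argument of Lemma \ref{roothunt} — to produce a further pair in $A-A$ realizing the profile $(r,0)$, thereby reducing to the already-handled $s=0$ case.

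\emph{Main obstacle.} The technical crux is precisely the $s>0$ case in the projection bound: translating a realized ``off-diagonal'' vanishing $B(\ze_{p^{m-r}q^{n-s}})=0$, $s>0$, into the ``boundary'' vanishing $B(\ze_{p^{m-r}q^n})=0$ (equivalently, $r\in R_B^N(p)$) under the absorption-free and primitive-spectral hypotheses on the pair $(A,B)$. This is where the rigidity consequences of the absorption-free structure collected in Section 8 must be combined carefully with the vanishing-sum structure of Section 5, and where the bulk of the delicate analysis lies; the remaining three inequalities then follow mechanically by the symmetries noted at the outset.
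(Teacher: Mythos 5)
Your fiber bound $\abs{A_{j\bmod p^m}}\leq q^{\abs{S_B^N(q)}}$ and the identity $\abs{S_A^N(p)}+\deficit_p(A)=\abs{R_B^N(p)\cap[0,m-1]}$ are both correct and match the paper's bookkeeping (under the $p\leftrightarrow q$ swap). The gap is the projection bound, and you have correctly located it but supplied no argument. To run the $p$-adic digit injection on $\pi(A)\ssq\ZZ_{p^m}$ you need: for every $a,a'\in A$ with $a\not\equiv a'\bmod p^m$, $v_p(a-a')\in R_B^N(p)$. Writing $a-a'\in p^rq^s\ZZ_N^{\star}$, Corollary \ref{specord} gives only $B(\ze_{p^{m-r}q^{n-s}})=0$, whereas $r\in R_B^N(p)$ means $B(\ze_{p^{m-r}q^n})=0$; these coincide only when $s=0$. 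Your proposed fix --- using Theorem \ref{vansumspos} and Lemma \ref{roothunt} to ``produce a further pair in $A-A$ realizing the profile $(r,0)$'' --- is a wish, not a proof, and nothing in Sections 5 or 8 yields such a pair; it is not clear that one must exist.

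More fundamentally, you are trying to establish a \emph{uniform} bound $\abs{\pi(A)}\leq p^{\abs{R_B^N(p)\cap[0,m-1]}}$, which is strictly stronger than what the lemma needs, and the paper does not prove such a bound. Instead, the paper locates a \emph{single} coset of controlled size: with $k_0$ the largest integer in $[0,n-1]$ satisfying $B(\ze_N^{q^{k_0}})\neq0$, it applies Lemma \ref{abseqd} (with the roles of $A,B$ and of $p,q$ swapped, $d=q^{k_0}$) to conclude that every $A_{j\bmod q^{k_0}}$ is absorbed or equidistributed $\bmod q^{k_0+1}$, picks a $j$ where absorption occurs (one exists since $A(\ze_{q^{k_0+1}})\neq0$), and tracks the cardinality through the chain of absorption/equidistribution at lower $q$-powers to get $\abs{A_{j\bmod q^{k_0+1}}}=\abs{A}/q^{\abs{S_A^N(q)\cap[1,k_0+1]}}$ exactly. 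Combining this with the trivial count $q^{n-k_0-1}$ of cosets $\bmod q^n$ inside that single coset, the uniform fiber bound, and the identity $n-k_0-1=\abs{S_A^N(q)\cap[k_0+2,n]}+\deficit_q(A)$, gives the estimate. The paper therefore never needs, and never asserts, an $s>0\Rightarrow s=0$ reduction on difference profiles. To repair your plan you would have to abandon the uniform projection bound in favour of a ``one good coset suffices'' argument of the same shape.
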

 
 \begin{proof}
  We will show that
  \[\abs{A}\leq p^{\abs{S_B^N(p)}}q^{\abs{S_A^N(q)}+\deficit_q(A)}.\]
  All the other inequalities are obtained by reversing the roles of $p$, $q$, and of $A$, $B$. First, we will prove that
  \begin{equation}\label{eq108}
  \abs{A_{j\bmod q^n}}\leq p^{\abs{S_B^N(p)}},
  \end{equation}
  for all $j$. Let 
  \[S_B^N(p)=\set{x_1,\dotsc,x_t}\ssq\set{1,2,\dotsc,m},\]
  where $t=\abs{S_B^N(p)}$ and the $x_i$ are in increasing order. We will write every element of $a\in A_{j\bmod q^n}$ in $p$-adic representation, that is
  \[a\equiv a_0+a_1p+\dotsb+a_{m-1}p^{m-1}\bmod p^m,\]
  where as usual $0\leq a_i\leq p-1$, $0\leq i\leq m-1$. No two elements of $A_{j\bmod q^n}$ have the same $p$-adic digits, otherwise $N=p^mq^n$ would divide their difference, i.e. they would be equal.
  Moreover, two distinct elements $a,a'\in A_{j\bmod q^n}$ cannot have the same $p$-adic digits at the places
  \[m-x_t,\dotsc,m-x_1,\]
  otherwise, $a-a'\in p^{m-x}q^n\znp$ where $x\notin S_B^N(p)$, and as a consequence
  \[B(\ze_{\ord(a-a')})=B(\ze_{p^x})\neq0,\]
  contradicting Corollary \ref{specord}. Thus, the elements of $A_{j\bmod q^n}$ have distinct digits at one of the places $m-x_i$. This clearly shows that
  \[\abs{A_{j\bmod q^n}}\leq p^t=p^{\abs{S_B^N(p)}},\]
  proving our claim. Next, we will show that there is some $j$ such that
  \[\abs{A_{j\bmod q^n}}\geq \frac{\abs{A}}{q^{\abs{S_A^N(q)}+\deficit_q(A)}}.\]
  Recall that $m_q(B)$ is the maximal integer $x\in[0,n-1]$ for which
  \[B(\ze_N^{q^x})\neq0.\]
  Since $B\in\Fmax(N)$, such an exponent exists, as we have already seen. Furthermore, Lemma \ref{rootpatterns} and Proposition \ref{maxpqroots} imply that
  \[A(\ze_{q^{m_q(B)+1}})\neq0=A(\ze_{pq^{m_q(B)+1}}).\]
  Applying Lemma \ref{abseqd} for $d=q^{m_q(B)}$, with the roles of $p,q$ and $A,B$ reversed, we obtain that $A_{j\bmod q^{m_q(B)}}$ is either absorbed or equidistributed $\bmod q^{m_q(B)+1}$, for every $j$.
  Since $A(\ze_{q^{m_q(B)+1}})\neq0$, there must exist some $j$ such that $A_{j\bmod q^{m_q(B)}}$ is absorbed $\bmod q^{m_q(B)+1}$. Without loss of generality, we put
  \[A_{j\bmod q^{m_q(B)}}=A_{j\bmod q^{m_q(B)+1}}.\]
  The set $A_{j\bmod q^{m_q(B)+1}}$ is in turn equidistributed $\bmod pq^{m_q(B)+1}$, since the multiset $\frac{N}{pq^{m_q(B)+1}}\cdot A_{j\bmod q^{m_q(B)}}$ is contained in a $pq$-cycle for every $j$, and for this
  specific value of $j$ it is supported on a $p$-cycle. In particular,
  \[\abs{A_{j+\ell q^{m_q(B)+1}\bmod pq^{m_q(B)+1}}}=\frac{1}{p}\abs{A_{j\bmod q^{m_q(B)+1}}},\]
  for every $\ell$. For any $y\leq m_q(B)$ with $y+1\in S_A^N(q)$ it holds
  \begin{equation}\label{ed108}
  \abs{A_{j\bmod q^{y+1}}}=\frac{1}{q}\abs{A_{j\bmod q^y}},
  \end{equation}
  while if $y\leq m_q(B)$ and $y+1\notin S_A^N(q)$ it holds
  \begin{equation}\label{abs108}
  \abs{A_{j\bmod q^{y+1}}}=\abs{A_{j\bmod q^y}}.
  \end{equation}
  Indeed, since $B(\ze_N^{q^y})\neq0$ by Lemma \ref{rootpatterns}, we must have $a-a'\notin q^y\znp$ for every $a,a'\in A$, $a\neq a'$, due to Theorem \ref{mainref}\ref{i} (or Corollary \ref{specord}).
  If $A_{j\bmod q^y}$ were not absorbed $\bmod q^{y+1}$, there would exist some $u\not\equiv 0\bmod q$, such that $A_{j+u q^y\bmod q^{y+1}}\neq\vn$, hence for some $0\leq \ell\leq p-1$ satisfying
  \[\ell q^{m_q(B)+1-y}\not\equiv u\bmod p,\]
  and $a\in A_{j+\ell q^{m_q(B)+1}\bmod pq^{m_q(B)+1}}$, $a'\in A_{j+uq^y\bmod q^{y+1}}$, we obtain
  \[a-a'\equiv q^y(\ell q^{m_q(B)+1-y}-u)\bmod pq^{m_q(B)+1},\]
  hence $a-a'\in q^y\znp$, contradiction. Thus, combining \eqref{ed108} and \eqref{abs108} we get
  \[\abs{A_{j\bmod q^{m_q(B)+1}}}=\frac{\abs{A}}{q^{\abs{S_A^N(q)\cap[1,m_q(B)+1]}}}.\]
  Next, we recall
  \[\deficit_q(A)=\abs*{\set{y\in[1,n]:A(\ze_{p^y})\neq0=B(\ze_N^{p^{y-1}})}},\]
  where the set of $y$ in the description above must satisfy $y>m_q(B)+1$; in other words, every $y\in[m_q(B)+2,n]$ either satisfies $A(\ze_{q^y})=0$ or $A(\ze_{p^y})\neq0=B(\ze_N^{p^{y-1}})$, hence
  \[n-m_q(B)-1=\abs{S_A^N(q)\cap[m_q(B)+2,n]}+\deficit_q(A).\]
  Therefore, by \eqref{eq108} we get
  \begin{align*}
	p^{\abs{S_B^N(p)}} &\geq\max_{\ell}\abs{A_{j+\ell q^{m_q(B)+1}\bmod q^n}}\\
	&\geq\frac{1}{q^{n-m_q(B)-1}}\abs{A_{j\bmod q^n}}\\
	&=  \frac{\abs{A}}{q^{\abs{S_A^N(q)\cap[m_q(B)+2,n]}+\deficit_q(A)}q^{\abs{S_A^N(q)\cap[1,m_q(B)+1]}}},
	\end{align*}
  whence we finally obtain
  \[\abs{A}\leq p^{\abs{S_B^N(p)}}q^{\abs{S_A^N(q)}+\deficit_q(A)},\]
  as desired.
 \end{proof}

 \begin{cor}\label{deficitestimate2}
  Let $(A,B)$ be a spectral pair in $\ZZ_N$, where $N\in\Spq'$, $A\in\Fmax(N)$. Then,
  \[p^{\ceil{\log_pq}}q^{\ceil{\log_qp}}<\min\set{p^{\deficit_p(A)},p^{\deficit_p(B)},q^{\deficit_q(A)},q^{\deficit_q(B)}}.\]
  If in addition $p<q$, we get
  \[\deficit_q(A),\deficit_q(B)\geq3,\]
  and
  \[\deficit_p(A), \deficit_p(B)\geq 2\ceil{\log_pq}\geq4.\]
 \end{cor}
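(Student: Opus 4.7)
The plan is to pair each of the four upper bounds supplied by Lemma \ref{deficitestimate} with a matching lower bound on $\abs{A}$ supplied by Corollary \ref{divAB}. By Theorem \ref{summary}, both $A$ and $B$ lie in $\F'(N)$, so Corollary \ref{divAB} applies to each. Since $\gcd(p,q)=1$, we may freely combine the $p$-part from one with the $q$-part from the other: for any $X,Y\in\set{A,B}$,
\[p^{\abs{S_X^N(p)}+\ceil{\log_pq}}q^{\abs{S_Y^N(q)}+\ceil{\log_qp}}\mid\abs{A}.\]

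I will work through $\deficit_p(A)$ explicitly; the other three are strictly analogous. Taking $X=A$, $Y=B$ above and combining with inequality $(2)$ of Lemma \ref{deficitestimate}, namely $\abs{A}\leq p^{\abs{S_A^N(p)}+\deficit_p(A)}q^{\abs{S_B^N(q)}}$, and then dividing both sides by $p^{\abs{S_A^N(p)}}q^{\abs{S_B^N(q)}}$, produces
\[p^{\ceil{\log_pq}}q^{\ceil{\log_qp}}\leq p^{\deficit_p(A)}.\]
The inequality is actually strict because the $q$-adic valuation of the left-hand side is $\ceil{\log_qp}\geq 1$ whereas the right-hand side is a pure power of $p$, making equality of integers impossible. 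The bounds for $\deficit_p(B)$, $\deficit_q(A)$, $\deficit_q(B)$ are obtained from inequalities $(1)$, $(3)$, $(4)$ of Lemma \ref{deficitestimate} paired with the divisibility above for the choices $(X,Y)=(B,A),(B,A),(A,B)$ respectively; in each case the $p$- and $q$-exponents of $\abs{S_X^N}$ and $\abs{S_Y^N}$ cancel and strictness follows from the same coprimality argument.

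For the $p<q$ refinement, one uses $\ceil{\log_qp}=1$ and the definition of the ceiling, namely $p^{\ceil{\log_pq}-1}<q\leq p^{\ceil{\log_pq}}$, together with $\ceil{\log_pq}\geq 2$. From the strict inequality $p^{\ceil{\log_pq}}q<q^{\deficit_q(A)}$ and $p^{\ceil{\log_pq}}\geq q$ one deduces $q^2<q^{\deficit_q(A)}$, hence $\deficit_q(A)\geq 3$, and similarly for $\deficit_q(B)$. From $p^{\ceil{\log_pq}}q<p^{\deficit_p(A)}$ and $q>p^{\ceil{\log_pq}-1}$ one deduces $p^{2\ceil{\log_pq}-1}<p^{\deficit_p(A)}$, hence $\deficit_p(A)\geq 2\ceil{\log_pq}\geq 4$, and symmetrically for $\deficit_p(B)$.

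The proof is essentially bookkeeping; the only real obstacle is identifying the correct pairing of each Lemma \ref{deficitestimate} upper bound with the right divisibility from Corollary \ref{divAB} so that the shared $\abs{S_{\cdot}^N(\cdot)}$ factors cancel cleanly. Once this matching is arranged, strictness is immediate from the coprimality of $p$ and $q$, and the sharper estimates under $p<q$ follow from a direct arithmetic use of the definition of the ceiling.
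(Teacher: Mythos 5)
Your proof is correct and follows essentially the same route as the paper: combine the divisibility from Corollary \ref{divAB} (using $\abs{A}=\abs{B}$ and coprimality of $p,q$) with the four upper bounds of Lemma \ref{deficitestimate}, cancel the shared $\abs{S_{\cdot}^N(\cdot)}$ exponents, get strictness from coprimality, and then refine under $p<q$ via $\ceil{\log_qp}=1$ and $p^{\ceil{\log_pq}-1}<q\leq p^{\ceil{\log_pq}}$. The only cosmetic difference is that the paper packages the divisibilities into a single statement using $s_p=\max\set{\abs{S_A^N(p)},\abs{S_B^N(p)}}$ and $s_q=\max\set{\abs{S_A^N(q)},\abs{S_B^N(q)}}$, whereas you cancel each pairing exactly.
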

 
 \begin{proof}
  Denote
  \[s_p=\max\set{\abs{S_A^N(p)},\abs{S_B^N(p)}}, \;\;\;\; s_q=\max\set{\abs{S_A^N(q)},\abs{S_B^N(q)}}.\]
  By Corollary \ref{divAB} we obtain
  \[p^{s_p+\ceil{\log_pq}}q^{s_q+\ceil{\log_qp}}\mid\abs{A},\]
  whence by Lemma \ref{deficitestimate} it follows
  \begin{align*}
  p^{s_p+\ceil{\log_pq}}q^{s_q+\ceil{\log_qp}}\leq \min\{p^{\abs{S_B^N(p)}+\deficit_p(B)}q^{\abs{S_A^N(q)}},p^{\abs{S_A^N(p)}+\deficit_p(A)}q^{\abs{S_B^N(q)}},\\ 
  p^{\abs{S_B^N(p)}}q^{\abs{S_A^N(q)}+\deficit_q(A)},  p^{\abs{S_A^N(p)}}q^{\abs{S_B^N(q)}+\deficit_q(B)}\},
  \end{align*}
  which easily yields the inequality
  \[p^{\ceil{\log_pq}}q^{\ceil{\log_qp}}<\min\set{p^{\deficit_p(A)},p^{\deficit_p(B)},q^{\deficit_q(A)},q^{\deficit_q(B)}}.\]
  Next, suppose $p<q$, so that the above inequality becomes
  \[p^{\ceil{\log_pq}}q<\min\set{p^{\deficit_p(A)},p^{\deficit_p(B)},q^{\deficit_q(A)},q^{\deficit_q(B)}}.\]
  Since $q^2<p^{\ceil{\log_pq}}q$, the above yields
  \[\deficit_q(A),\deficit_q(B)\geq3\]
  and
  \[p^{2\ceil{\log_pq}}\leq\min\set{p^{\deficit_p(A)},p^{\deficit_p(B)}},\]
  or equivalently,
  \[\deficit_p(A), \deficit_p(B)\geq 2\ceil{\log_pq}\geq4,\]
  as desired.
 \end{proof}

 \begin{prop}\label{deficitestimate3}
  Let $(A,B)$ be a spectral pair in $\ZZ_N$, such that $N\in\Spq'$, $A\in\Fmax(N)$ and $p<q$. Then
  \[\deficit_p(A),\deficit_p(B)\leq m-2-2\ceil{\log_pq}\]
  and
  \[\deficit_q(A),\deficit_q(B)\leq n-4.\]
 \end{prop}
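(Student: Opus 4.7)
The plan is to bound $\deficit_p(A)$ by an expression involving $m_0(B)$ (the dual analogue of $m_0$ for the spectrum), and then establish $m_0(B)\geq 2\ceil{\log_pq}$ via a careful counting argument. Since $B\in\F'(N)$ by Theorem \ref{summary}, I may apply Lemma \ref{rootpatterns} to the spectral pair $(B,A)$: for every $x\leq m_0(B)$ one has $B(\ze_N^{p^x})=0\Lrar A(\ze_{p^{x+1}})=0$. Therefore every $x$ contributing to $\deficit_p(A)$ satisfies $x>m_0(B)$, so
\[
\deficit_p(A)=(m-1-m_0(B))-\abs{S_A^N(p)\cap[m_0(B)+2,m]}.
\]
By Proposition \ref{basict2}, $B(\ze_{pq^n})=0$, hence $m_0(B)\leq m-2$; combined with $m\in S_A^N(p)$ from Theorem \ref{summary}, this yields $\deficit_p(A)\leq m-2-m_0(B)$.

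The heart of the proof is the lower bound $m_0(B)\geq 2\ceil{\log_pq}$. First, applying the above equivalence with $x=m_0(B)$ gives $A(\ze_{p^{m_0(B)+1}})\neq 0$, so $m_0(B)+1\notin S_A^N(p)$; since $\{1,\dotsc,\ceil{\log_pq}+1\}\ssq S_A^N(p)$ by Corollary \ref{rootssmallpowers}, this forces $m_0(B)\geq\ceil{\log_pq}+1$. Next, by swapping the roles of $A$ and $B$ in the proof of Lemma \ref{Ulb} (legitimate since $(B,A)$ is a spectral pair with both members in $\F'(N)$), I obtain the strengthened conclusion
\[
\abs{U_A^N(p)\cap[1,m_0(B)+1]}\geq\ceil{\log_pq}.
\]
Because $U_A^N(p)\cap S_A^N(p)=\vn$, it follows that $\abs{S_A^N(p)\cap[1,m_0(B)+1]}\leq m_0(B)+1-\ceil{\log_pq}$. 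The equivalence from Lemma \ref{rootpatterns} provides a bijection $y\leftrightarrow y-1$ between $S_A^N(p)\cap[1,m_0(B)+1]$ and $R_B^N(p)\cap[0,m_0(B)]$, so
\[
\abs{R_B^N(p)\cap[0,m_0(B)]}\leq m_0(B)+1-\ceil{\log_pq}.
\]
On the other hand, Corollary \ref{rootssmallpowers} and Theorem \ref{summary} applied to $B$ show that $\{0,1,\dotsc,\ceil{\log_pq}\}\ssq R_B^N(p)$; since $m_0(B)\geq\ceil{\log_pq}$, this interval lies in $[0,m_0(B)]$ and contributes $\ceil{\log_pq}+1$ elements, giving $\abs{R_B^N(p)\cap[0,m_0(B)]}\geq\ceil{\log_pq}+1$. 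Comparing the two estimates yields $m_0(B)+1-\ceil{\log_pq}\geq\ceil{\log_pq}+1$, i.e.\ $m_0(B)\geq 2\ceil{\log_pq}$, and hence $\deficit_p(A)\leq m-2-2\ceil{\log_pq}$.

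The bound for $\deficit_p(B)$ follows by interchanging the roles of $A$ and $B$ throughout, using that both sets lie in $\F'(N)$; the bounds for $\deficit_q(A)$ and $\deficit_q(B)$ are proved identically, swapping $p$ with $q$. In the $q$-direction one has $\ceil{\log_qp}=1$ (since $p<q$), so the argument gives $n_0(B)\geq 2$, yielding $\deficit_q(A)\leq n-2-2=n-4$, and similarly for $\deficit_q(B)$. The main obstacle is recognising that, although Lemma \ref{deficitestimate} only produces lower bounds on deficits through size-inequalities on $\abs{A}$, the upper bounds must come from a structural argument: the strengthened version of Lemma \ref{Ulb} controls how much of $[1,m_0(B)+1]$ can belong to $S_A^N(p)$, which through the $S_A^N(p)\leftrightarrow R_B^N(p)$ bijection constrains how small $m_0(B)$ can be relative to the already-forced vanishings of $B$ at small prime powers.
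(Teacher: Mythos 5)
Your proof is correct, but it takes a somewhat different and more explicit route than the paper's. The paper's own proof asserts that $S_A^N(p)-1$, $U_A^N(p)-1$, and the set counted by $\deficit_p(A)$ are pairwise disjoint subsets of $\{0,1,\dotsc,m-1\}$ ``by definition'', then simply sums cardinalities using $\abs{S_A^N(p)}\geq\ceil{\log_pq}+2$ (from Theorem \ref{summary} and Corollary \ref{rootssmallpowers}) and $\abs{U_A^N(p)}\geq\ceil{\log_pq}$ (Lemma \ref{Ulb}), and analogously for $q$. However, the disjointness of $U_A^N(p)-1$ from the deficit set is not literally a consequence of the definitions: membership in $U_A^N(p)-1$ requires $A(\ze_{p^{y+1}})\neq0=A(\ze_{p^{y+1}q})$, while membership in the deficit set requires $A(\ze_{p^{y+1}})\neq0=B(\ze_N^{p^y})$, and these two conditions are not formally incompatible. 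What actually makes the count go through is exactly what you supply: by Lemma \ref{rootpatterns} the deficit set lies entirely in $(m_0(B),m-1]$, while the strengthened conclusion you extract from the \emph{proof} of Lemma \ref{Ulb} confines $\ceil{\log_pq}$-many elements of $U_A^N(p)$ to $[1,m_0(B)+1]$, which forces the needed disjointness (and likewise in the $q$-direction). So your argument fills a small gap that the paper glosses over.

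Beyond that, your presentation repackages the same count differently: rather than summing three cardinalities inside $\{0,\dotsc,m-1\}$, you first prove $\deficit_p(A)\leq m-2-m_0(B)$ (using $m\in S_A^N(p)$ and the confinement of the deficit set above $m_0(B)$), and then separately establish $m_0(B)\geq 2\ceil{\log_pq}$ by playing $\abs{U_A^N(p)\cap[1,m_0(B)+1]}\geq\ceil{\log_pq}$ against the forced vanishings $\{0,\dotsc,\ceil{\log_pq}\}\ssq R_B^N(p)$ via the $S_A^N(p)\leftrightarrow R_B^N(p)$ correspondence of Lemma \ref{rootpatterns}. This is longer than the paper's two-line count, but it is sound, and it makes explicit where the interval $[1,m_0(B)+1]$ enters. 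Your $q$-direction specialisation ($\ceil{\log_qp}=1$ since $p<q$, giving $n_0(B)\geq2$ and hence $\deficit_q(A),\deficit_q(B)\leq n-4$) is also carried out correctly.
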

 
 \begin{proof}
  We remind that $\abs{S_A^N(p)-1}$, $\abs{U_A^N(p)-1}$ and $\deficit_p(A)$, are cardinalities of pairwise disjoint subsets of $\set{0,1,\dotsc,m-1}$, by definition. The bounds given by 
  Proposition \ref{basict2}, Corollary \ref{rootssmallpowers}, Lemma \ref{Ulb}, and Corollary \ref{deficitestimate2}, yield
  \[m\geq\abs{S_A^N(p)}+\abs{U_A^N(p)}+\deficit_p(A)\geq 2\ceil{\log_pq}+2+\deficit_p(A),\]
  which easily gives the first estimate (the same holds for $\deficit_p(B)$). For the second inequality we obtain similarly
  \[n\geq\abs{S_A^N(q)}+\abs{U_A^N(q)}+\deficit_q(A)\geq 4+\deficit_q(A),\]
  which gives the second inequality (the same inequality also holds for $\deficit_q(B)$).
 \end{proof}

 \bigskip\bigskip
 
 \section{Proof of Theorem \ref{mainthm}}
 \bigskip\bigskip
 
 From Corollary \ref{deficitestimate2} and Proposition \ref{deficitestimate3}, we obtain that $\F(N)$ and hence $\Fmax(N)$ is nonempty with $N\in\Spq'$, only if
 \[4\leq2\ceil{\log_pq}\leq m-2-2\ceil{\log_pq}\leq m-6,\]
 and $3\leq n-4$. This proves that if $N=p^mq^n$ with $p<q$,
 and either $m\leq9$ or $n\leq6$ holds, then every spectral subset of $\ZZ_N$ tiles, i.e. Fuglede's conjecture holds for cyclic groups of order $N$ with the given conditions. For the second part of the
 Theorem, 
 we invoke the inequalities involving $\deficit_p(A)$ and $\deficit_p(B)$ in Corollary \ref{deficitestimate2} and Proposition \ref{deficitestimate3} to obtain
 \[4\ceil{\log_pq}+2\leq m,\]
 which implies
 \[q^4<p^{4\ceil{\log_pq}}\leq p^{m-2}.\]
 Thus, if $p^{m-2}<q^4$ holds, then $\Fmax(N)$ must be necessarily empty, or in other words, $N\notin\Spq$, which by definition means that Fuglede's conjecture holds for cyclic groups of order
 $p^mq^n$, where $p^{m-2}<q^4$.

 \bigskip\bigskip

  \bibliographystyle{amsplain}

\providecommand{\bysame}{\leavevmode\hbox to3em{\hrulefill}\thinspace}
\providecommand{\MR}{\relax\ifhmode\unskip\space\fi MR }
\providecommand{\MRhref}[2]{%
  \href{http://www.ams.org/mathscinet-getitem?mr=#1}{#2}
}
\providecommand{\href}[2]{#2}

\end{document}